\theoremstyle{plain}
\newtheorem{theorem}{Theorem}[subsection]
\theoremstyle{definition}
\newtheorem{definition}[theorem]{Definition}
\newtheorem{lemma}[theorem]{Lemma}
\newtheorem{corollary}[theorem]{Corollary}
\theoremstyle{remark}
\newtheorem{remark}[theorem]{Remark}
\newtheorem{example}[theorem]{Example}
\newtheorem{notation}{Notation}[section]
\def\subsection{\@startsection{subsection}{2}%
  \z@{.5\linespacing\@plus.7\linespacing}{.3\linespacing}%
  {\normalfont\bfseries}}
\def\subsubsection{\@startsection{subsubsection}{2}%
  \z@{.5\linespacing\@plus.7\linespacing}{.3\linespacing}%
  {\normalfont\bfseries}}
\DeclareMathOperator{\Proj}{\mathbb{P}}				
\DeclareMathOperator{\RealTropicalgroup}{\mathbb{TR}}
\DeclareMathOperator{\N}{\mathbb{N}}
\DeclareMathOperator{\Z}{\mathbb{Z}}
\DeclareMathOperator{\Q}{\mathbb{Q}}
\DeclareMathOperator{\R}{\mathbb{R}}
\DeclareMathOperator{\C}{\mathbb{C}}
\newcommand{\Field}{\mathbb{K}}					
\newcommand{\ComplexField}{\Field_{\C}}				
\newcommand{\RealField}{\Field_{\R}}				
\newcommand{\ComplexPuiseux}{\C\{\{t\}\}}				
\newcommand{\RealPuiseux}{\R\{\{t\}\}}					
\newcommand{\Units}[1]{{#1}^*}
\newcommand{\RealLauring}[1]{\RealField\left[x_1^\pm,\ldots,x_{#1}^\pm\right]}	
\newcommand{\Realcoefficientring}[1]{\RealField\left[y_1,\ldots,y_{#1}\right]}
\newcommand{\RealGenericringnvars}{\RealField\left[y_1,\ldots,y_m\right]\left[x_{1}^\pm,\ldots,x_{n}^\pm\right]}
\newcommand{\RealTropaffring}[1]{\RealTropicalgroup\left[w_1,\ldots,w_{#1}\right]}
\newcommand{\RealTroplauring}[1]{\RealTropicalgroup\left[w_1^\pm,\ldots,w_{#1}^\pm\right]}
\newcommand{\RealTroplauringtwovars}{\RealTropicalgroup\left[w_1^\pm,w_2^\pm\right]}
\newcommand{\RealTorus}[1]{T_{\R}^{#1}}					
\newcommand{\ComplexTorus}[1]{T_{\C}^{{#1}}}
\newcommand{\Mod}[1]{\big/\ones{{#1}}}					
\newcommand{\Dual}[1]{\left( {#1} \right)^\vee}				
\newcommand{\groundset}[1]{\left[{#1}\right]}			
\newcommand{\circuits}{\mathcal{C}}				
\newcommand{\affinespan}[1]{\mathrm{aff}\left({#1}\right)}	
\newcommand{\biglattice}{\flag_\text{big}}			
\newcommand{\initialcircuit}[2]{\mathrm{in}_{{#1}}\left( {#2} \right)}		
\newcommand{\initialcircuits}[1]{\mathrm{in}_{{#1}}\left( \circuits \right)}	
\newcommand{\reoriented}[2]{{}_{-{#1}}{#2}}			
\newcommand{\reorientedmatroid}[2]{{}_{-{#1}}{#2}}		
\newcommand{\reorientedcircuits}[1]{{}_{-{#1}}\circuits}	
\newcommand{\umatroid}[1]{\underline{{#1}}}			
\newcommand{\svector}{\mathcal{S}}				
\newcommand{\pvector}[1]{\{\pm\}^{{#1}}}
\newcommand{\cvector}[1]{\mathcal{L}_{{#1}}}			
\newcommand{\topes}{\mathcal{Y}}				
\newcommand{\signedbergmanfan}[2]{\mathcal{B}^{{#1}}\left({#2}\right)}
\newcommand{\idealmatroid}[1]{M\left({#1}\right)}		
\newcommand{\vectormatroid}[1]{M\left[{#1}\right]}		
\newcommand{\support}{\mathcal{A}}				
\newcommand{\ones}[1]{\mathbf{1}_{{#1}}}			
\newcommand{\zeros}[1]{\mathbf{0}_{{#1}}}			
\newcommand{\Realsingsat}[1]{\nabla_{\R,{#1}}}			
\newcommand{\flag}{\mathcal{F}}					
\newcommand{\tdim}[1]{\mathrm{tdim}({#1})}			
\newcommand{\rowspace}{\mathrm{rowspace}}
\newcommand{\secondaryfan}[1]{\mathrm{Sec}_{{#1}}}		
\newcommand{\signedsecondaryfan}[1]{\{\pm\}^m\times\secondaryfan{#1}}
\DeclareMathOperator{\conv}{\mathrm{conv}}			
\DeclareMathOperator{\relint}{\mathrm{relint}}			
\newcommand{\RA}{\R^{|\support|}}				
\newcommand{\sk}[2]{\langle {#1},{#2}\rangle}			
\newcommand{\initialform}[2]{\mathrm{in}_{{#1}}\left({#2}\right)}	
\newcommand{\Var}[1]{\mathcal{V}\left( {#1} \right)}		
\newcommand{\Th}[1]{\mathcal{T}\left( {#1} \right)}		
\newcommand{\RealTh}[1]{\mathcal{T}_{\R}\left( {#1} \right)}	
\newcommand{\Ideal}[1]{\mathcal{{#1}}}				
\newcommand{\Newt}[1]{\mathrm{Newt}\left( {#1} \right)}		
\DeclareMathOperator{\Kleingroup}{\Z_2\times\Z_2}
\DeclareMathOperator{\Image}{\mathrm{Im}}			
\newcommand{\sign}{\mathrm{sign}}				
\newcommand{\val}{\mathrm{val}}					
\newcommand{\complextrop}{\mathrm{trop}_{\C}}				
\newcommand{\realtrop}{\mathrm{trop}_{\R}}			
\newcommand{\If}{~\text{if}~}					
\newcommand{\Then}{~\text{then}~}				
\newcommand{\Or}{~\text{or}~}
\renewcommand{\And}{~\text{and}~}
\newcommand{\With}{~\text{with}~}
\newcommand{\Acyclic}{\text{-acyclic}}
\begin{document}
\title{Real Tropical Singularities and Bergman Fans}
\author{Christian J\"urgens}
\address{Christian J\"urgens, Fachbereich Mathematik, Eberhard Karls Universit\"at T\"ubingen, 72076 T\"ubingen.}
\email{chju@math.uni-tuebingen.de}
\thanks{The author gratefully acknowledges support by DFG-grant MA 4797/5-1.}
\subjclass[2010]{14T05, Secondary: 51M20, 05B35}

\begin{abstract}	
In this paper, we classify singular real plane tropical curves by means of subdivisions of Newton polytopes. First, we introduce signed Bergman fans (generalizing positive Bergman fans from \cite{ardilaklivanswilliams}) that describe real tropicalizations of real linear spaces  (\cite{tabera}). Then, we establish a duality of real plane tropical curves and signed regular subdivisions of the Newton polytope and explore the combinatorics. We define a signed secondary fan that parametrizes real tropical Laurent polynomials and study the subset providing singular real plane tropical curves. A cone of the signed secondary fan is of maximal dimensional type if its corresponding subdivision contains only marked points (\cite{markwigmarkwigshustin}). These cones parametrize real plane tropical curves. We classify singular real plane tropical curves of maximal dimensional type.
\end{abstract}
\maketitle


Singularities play a decisive role in algebraic geometry. In recent years, a lot of effort was put into the translation of the concept of a singularity to tropical geometry (e.g. \cite{dickensteintabera}, \cite{markwigmarkwigshustin}, \cite{markwigmarkwigshustin2}, \cite{dickensteinfeichtnersturmfels}). However, this process is far from complete. To the present day, there is no intrinsic definition of a singular tropical variety and current research is limited to tropical hypersurfaces. A common approach is to call a tropical hypersurface singular if there is a singular algebraic hypersurface tropicalizing to it. A popular instrument for the investigation of singular tropical hypersurfaces is the $A$-discriminant defined by Gelfand, Kapranov and Zelevinsky (\cite{dickensteinfeichtnersturmfels}, \cite{gelfandkapranovzelevinsky}). Its zero set parametrizes the family of Laurent polynomials with fixed support (given by the columns of the matrix $A\in\Z^{n\times n}$) that provide singular hypersurfaces.
\smallskip

We consider singular tropical hypersurfaces for real tropical geometry viewed as real tropicalizations of geometry over real Puiseux series. Tropical geometry is a powerful tool for the study of real geometry. The famous Viro's patchworking method can be used to construct real varieties with prescribed topology building on tropical varieties(\cite{MR2465564}, \cite{MR1905317}, \cite{MR1413249}, \cite{MR1036837}, \cite{MR1745011}, \cite{MR3574512}, \cite{MR3044488}). Tropical curves can be used to study counts of real curves satisfying incidence conditions (\cite{MR2137980}, \cite{MR2400524}, \cite{MR2466076}). Real tropicalization also plays a role in the study of convex geometry and tropical polytopes, with applications e.g. in complexity theory (\cite{MR2367318}, \cite{MR3336300}, \cite{MR3504692}). The chart of a real tropicalization with all signs positive (also called the positive part of a tropical variety) plays a role in the study of total positivity, which classically concerns matrices with all minors being positive and which nowadays has connections to the study of Cluster algebras (\cite{MR2164397}, \cite{MR1648700}, \cite{MR2525057}, \cite{MR2813307}, \cite{MR3279534}, \cite{MR3607668}). Tropical geometry is also a tool to study solutions of systems of polynomial equations over the reals (\cite{MR2555958}, \cite{MR2795209}, \cite{MR2783101}, \cite{2017arXiv170302272E}).\smallskip

Here, we view real tropicalization as tropicalization with signs (\cite{tabera}). Due to the signs, a real plane tropical curve can be described by sets of charts (cf. \Cref{defi:charts}). These charts are dual to signed regular marked subdivisions (cf. \Cref{defi:signedmarkedsubdivision}). We define a signed secondary fan whose equivalence classes correspond to sets of charts defining a real plane tropical curve ``up to sign permutation'' (cf. \Cref{defi:signedsecondaryfan}). Our main result is a characterization of real plane tropical curves of maximal dimensional type with a singularity in a fixed point stated in \Cref{subsec:firststepsrealclassification}. Our methods can also be applied to the case of real tropical surfaces with a singularity in a fixed point. We refer to \cite[Theorem 4.3.3.9]{juergens} for details.\smallskip

\Cref{sec:realtropicalgeometry} contains preliminaries concerning real tropicalization and real tropical curves as signed versions in charts following \cite{tabera}. \Cref{sec:signedbergmanfans} reviews oriented matroids and generalizes the associated positive Bergman fan from \cite{ardilaklivanswilliams} for arbitrarily signed Bergman fans. Signed Bergman fans are real tropicalizations of real linear spaces. \Cref{sec:realsingularplanetropicalcurves} exploits the real tropicalization of the linear space that parametrizes the family of real curves with a singularity in a fixed point  to characterize the combinatorics of singular real plane tropical curves. This is based on \cite{markwigmarkwigshustin} which provides a combinatorial characterization in the non-real (respectively unsigned) case. Our main result is \Cref{theo:classificationcurvesmaximaldimensionaltype} which classifies singular real plane tropical curves.
\section*{Acknowledgments}
I would like to offer special thanks to Hannah Markwig for all the helpful comments, discussions and the careful proofreading.

\section{Real Tropical Geometry}\label{sec:realtropicalgeometry}

The following preliminary section is based on \cite{tabera}.

\subsection{Real Puiseux Series and the Real Tropical Group}\label{subsec:realpuiseuxseries}

We work over the non-archimedian valued field of real Puiseux series
$\RealPuiseux$. An element $a\in\RealPuiseux$ is a power series in the indeterminate $t$ with rational exponents that have a common denominator and the coefficients are real valued, i.e. there exist $i\in\Z$, $n\in\N$ and $a_k\in\R$ for all $k\geq i$ such that
\begin{align}\label{eq:realPuiseux}
a = \sum_{k = i}^\infty a_k t^{\frac{k}{n}}\in\RealPuiseux.
\end{align}
The valuation $\val:\RealPuiseux^*\rightarrow\R$ of $0\neq a\in\RealPuiseux$ is the lowest exponent of $t$ appearing in $a$. Note that $\RealPuiseux$ is not algebraically closed. However, $\RealPuiseux$ is an ordered field via $a>0$ if and only if the coefficient of $t^{\val(a)}$ is positive. We write $\RealField=\RealPuiseux$ and denote the $n$-dimensional torus by $\RealTorus{n}$.

\begin{definition}[Sign function]\label{defi:signfunction}
Let $a\in\Units{\RealField}$ be a real Puiseux series. We define a \textit{sign function}
\begin{equation}\notag
s: \RealField^* \longrightarrow \left\{-1,1\right\},\quad s(a) = \begin{cases}
       1~\text{if}~a>0,\\
       -1~\text{if}~a<0.
       \end{cases} 
\end{equation}
\end{definition}

In the following, we work over $\RealField$. If we work over $\ComplexField=\ComplexPuiseux$ we refer to it as \textit{the complex case}. We denote the $n$-dimensional torus over $\ComplexField$ by $\ComplexTorus{n}$. The underlying algebraic structure of our real tropical objects is defined as follows:

\begin{definition}[Real tropical group]\label{defi:realtropicalgroup}
The tuple $\RealTropicalgroup =\left( \{-1,1\}\times \R,\odot_{\R}\right)$ is called \textit{real tropical group}. The composition of $(a,x),(b,y)\in\RealTropicalgroup$ via $\odot_{\R}$ is defined by
\begin{equation}\notag
(a,x)\odot_{\R} (b,y) = (ab,x+y).
\end{equation}
\end{definition}

There is no reasonable ``addition'' in $\RealTropicalgroup$ as it is not clear how to ``add'' signs.
\subsection{Real Tropicalization}\label{subsec:realtropicalization}

First, we fix the notion of a \textit{real tropicalization}:

\begin{definition}[Real tropicalization]\label{defi:realtropicalization}
The \textit{real tropicalization} is the map defined by
\begin{align*}
\realtrop: \Units{\RealField} &\longrightarrow \RealTropicalgroup = \{-1,1\}\times \R\\
x &\longmapsto \left( s(x),-\val(x) \right)
\end{align*}
where $s$ is the sign function (cf. \Cref{defi:signfunction}) and $\val$ the valuation of $\RealField$. For $y=(a,b)\in\RealTropicalgroup$ we call $a$ the sign of $y$ and $b$ the modulus of $y$. By abuse of notation, we refer to the first component of $y$ by $s(y)$ and to the second component of $y$ by $|y|$.
\end{definition}

In comparison to the tropicalization over $\ComplexField$ (defined by $\complextrop=-\val$ where $\val$ is equivalently defined on $\ComplexField$), the real tropicalization takes signs into account.

\begin{definition}[Sign vector]\label{defi:signvector}
An element $s\in\svector:=\left\{+,-,0\right\}^n\cong\left\{1,-1,0\right\}^n$ is called \textit{sign vector}. If $s\in\pvector{n}$ we call $s$ \textit{pure sign vector}.
\end{definition}

\begin{definition}[Charts]\label{defi:chartsambientspace}
Let $s\in\svector$ be a pure sign vector. The copy of $\R^{n}$ containing all points of $\RealTropicalgroup^{n}$ with signs $s$, $s\times\R^{n}\subset\RealTropicalgroup$, is called \textit{$s$-chart} of $\RealTropicalgroup^{n}$.
\end{definition}

\begin{definition}[Real tropicalized variety]\label{defi:realtropicalizedvariety}
Let $X=\Var{I}\subset\RealTorus{n}$ be an algebraic variety defined by $I\subset\RealLauring{n}$. The \textit{real tropicalization} of $X$ is the closure of the set $\left\{\realtrop(x):x\in X\right\}$ in $\RealTropicalgroup^n$ and is denoted by $\realtrop(X)$.
\end{definition}

\begin{definition}[Real tropical polynomial]\label{defi:realtropicalpolynomial}
Let $\support=\{\alpha_1,\ldots,\alpha_m\}\subset\Z^n$ be a finite set. A \textit{real tropical polynomial} $f\in\RealTroplauring{n}$ with support $\support$ is a formal sum of products of real tropical monomials with respect to $\support$ and coefficients in $\RealTropicalgroup$, i.e.
\begin{equation}\notag
f = \bigoplus_{i} p_i w^{\alpha_i}.
\end{equation}
A real tropical polynomial $f$ yields a piecewise affine linear function by restricting to the modulus of $f$, i.e. $|f|:\RealTropicalgroup^n \longrightarrow \R$ is the map defined by $|f|(w)=\max_{i}\left\{|p_i|+\sk{\alpha_i}{|w|}\right\}$.
\end{definition}

As in the complex case, we can translate real Laurent polynomials to real tropical Laurent polynomials:

\begin{definition}[Real tropicalized polynomials]\label{defi:realtropicalizedpolynomials}
Let $\support=\{\alpha_1,\ldots,\alpha_m\}\subset\Z^n$ be the support of a real Laurent polynomial $F=\sum_{i} a_i x^{\alpha_i}\in\RealLauring{n}$. The \textit{real tropicalization of $F$} is the real tropical Laurent polynomial defined by \[f=\realtrop(F) := \bigoplus_{i} \left( s(a_i),-\val\left( a_i \right) \right) w^{\alpha_i}\in\RealTroplauring{n}.\]
\end{definition}

\begin{definition}[Real tropical hypersurface]\label{defi:realtropicalhypersurface}
Let $f\in\RealTroplauring{n}$ be a real tropical Laurent polynomial with support $\support=\{\alpha_1,\ldots,\alpha_m\}\subset\Z^n$. The \textit{real tropical hypersurface} $\RealTh{f}$ is the set of all $y=\left( y_1,\ldots,y_n \right)\in\RealTropicalgroup^{n}$ for which exist $i\neq j$ such that $|f|(w)$ attains its maximum at $\alpha_i$ and $\alpha_j$, i.e. \[|p_i| + \sk{\alpha_i}{|y|} = |p_j| + \sk{\alpha_j}{|y|} \geq |p_k| + \sk{\alpha_k}{|y|}\quad \forall k\neq i,j,\]
and the signs at $\alpha_i$ and $\alpha_j$ are opposite, i.e. \[s\left( p_i \right) \prod_{l=1}^n s\left( y_l \right)^{(\alpha_i)_l} \neq s\left( p_j \right) \prod_{l=1}^n s\left( y_l \right)^{(\alpha_j)_l}.\]
\end{definition}

\begin{example}[{\cite[Example 3.2]{tabera}}]\label{exam:nokapranov}
Consider $F=x^2-x+1\in\RealField\left[x\right]$. Then, $\Var{F}=\emptyset\subset\RealTorus{}$ but $\RealTh{\realtrop\left( F \right)}=\{\left( +,0 \right)\}$, i.e. $\realtrop\left( \Var{F} \right) \subsetneq \RealTh{\realtrop\left( F \right)}$.
\end{example}

\Cref{exam:nokapranov} show that there is no analog to Kapranov's Theorem.

\begin{definition}[Real tropical basis]\label{defi:realtropicalbasis}
Let $I\subset\RealLauring{n}$ be an ideal. A \textit{real tropical basis of} $I$ is a finite set $\left\{F_1,\ldots,F_k\right\}$ of $I$ such that 
\begin{equation}\notag
\realtrop\left(\Var{I}\right) = \bigcap_{i=1}^k \RealTh{\realtrop(F_i)}.
\end{equation}
\end{definition}

In \cite{tabera}, tropical bases for certain classes of ideals were studied. We explore the special case of real tropicalizations of linear subspaces $V=\Var{\Ideal{I}}\subset\RealTorus{n}$, i.e. let $\Ideal{I}=\langle l_1,\ldots,l_{n-k}\rangle\subset\RealLauring{n}$ be an ideal generated by linear forms $l_i=\sum_{j} a_{ij} x_j$. If $\Ideal{I}$ contains a generating set of polynomials whose coefficients have trivial valuation we say that  $\Ideal{I}$ has \textit{constant coefficients}.

\begin{remark}[Oriented matroids from linear ideals]\label{rema:orientedmatroidassociatedtoideal}
Let $\{l_1,\ldots,l_{n-k}\}$ be a generating set of the linear ideal $\Ideal{I}\subset\RealLauring{n}$. The linear form $l_{i}$ can be written as
\begin{equation}
l_i = \sum_{j\in J_1} a_{ij} x_j - \sum_{j\in J_2} a_{ij} x_j\label{eq:signedlinearform}
\end{equation}
where $J_1,J_2$ are disjoint sets and $a_{ij}>0$ for all $i,j$. We work in the constant coefficient case, i.e. $\val(a_j)=\val(b_j)=0$. The linear form, written as in \Cref{eq:signedlinearform}, provides a signed circuit $C_{i}$ by defining $C_{i}^+=J_1$ and $C_{i}^- = J_2$. All signed circuits obtained this way form an oriented matroid denoted by $M=\idealmatroid{\Ideal{I}}$. Similar to the complex case, we call $\idealmatroid{\Ideal{I}}$ the \textit{matroid associated to $\Ideal{I}$} (cf. \cite{sturmfels}). The real tropicalization of $l_{i}$ equals
\begin{equation}
\realtrop\left( l_{i} \right)=\left( \bigoplus_{j\in J_1} 0^+ w_j \right) \oplus \left( \bigoplus_{j\in J_2} 0^- w_j \right).\label{eq:realtropicallinearform}
\end{equation}
Note that we can determine $\realtrop(l_{i})$ uniquely from $C_{i}$. Thus, we refer to the tropical linear form obtained from $C$ by $l_C$.
\end{remark}

In the complex case, the underlying matroid $\umatroid{M}$ of $M$ determines the tropicalization of the linear space $\Var{\Ideal{I}}\subset\ComplexTorus{n}$. In the real case, we have an analogous statement for the oriented matroid $M$ with regard to $\realtrop\left( \Var{\Ideal{I}} \right)$. More precisely, the signed circuits form a tropical basis for $\Ideal{I}$.
\begin{theorem}[{\cite[Theorem 3.14]{tabera}}]\label{theo:realtropicalbasis}
Let $\Ideal{I}\subset\RealLauring{n}$ be a linear ideal with constant coefficients and $M$ the associated oriented matroid with signed circuits $\circuits$. Then:\[\realtrop\left(\Var{\Ideal{I}}\right)=\bigcap_{C\in\circuits} \RealTh{l_C}.\]
\end{theorem}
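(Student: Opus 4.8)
The plan is to prove the two inclusions separately, using the sign-refined analogue of the standard argument that circuits form a tropical basis in the complex setting.

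\textbf{The inclusion $\realtrop(\Var{\Ideal{I}})\subseteq\bigcap_{C\in\circuits}\RealTh{l_C}$.} First I would note that for every signed circuit $C$ the tropical linear form $l_C$ arises (up to scaling each variable by a unit of valuation zero) as the real tropicalization of an actual linear form $l_C'\in\Ideal{I}$: writing $C^+=J_1$, $C^-=J_2$, the form $l_C'=\sum_{j\in J_1}x_j-\sum_{j\in J_2}x_j$ (with the precise nonzero coefficients coming from the circuit in the realization) lies in $\Ideal{I}$ because circuits are linear combinations of the generators. Hence if $x\in\Var{\Ideal{I}}\subset\RealTorus{n}$ then $l_C'(x)=0$. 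Now I would argue that a point in the real tropical torus lying in the image $\realtrop(x)$ of a point with $l_C'(x)=0$ must satisfy the balancing-plus-opposite-sign condition defining $\RealTh{l_C}$: if only one monomial of $l_C'$ achieved the minimal valuation, or if all monomials achieving it had the same sign after evaluation, the corresponding sum would have valuation exactly that minimum (the leading terms cannot cancel), contradicting $l_C'(x)=0$. So $\realtrop(x)\in\RealTh{l_C}$ for every $C$, and since $\RealTh{l_C}$ is closed, taking closures gives the inclusion.

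\textbf{The reverse inclusion.} This is the substantive direction. Given $y=(\varepsilon,w)\in\RealTropicalgroup^n$ lying in $\RealTh{l_C}$ for every signed circuit $C$, I must produce a point of $\Var{\Ideal{I}}$ tropicalizing to (a point arbitrarily close to) $y$. The strategy is to reduce to the complex statement. By Remark~\ref{rema:orientedmatroidassociatedtoideal} the underlying matroid $\umatroid{M}$ controls $\complextrop(\Var{\Ideal{I}_\C})$, and by the complex version of Theorem~\ref{theo:realtropicalbasis} (the classical fact that circuits are a tropical basis for a linear ideal) the modulus $w$ alone, satisfying the unsigned tropical conditions for all circuits, lies in $\complextrop(\Var{\Ideal{I}})$; so there is $z\in\Var{\Ideal{I}}\subset\ComplexTorus{n}$ with $\complextrop(z)=w$. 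The key point is then to choose the \emph{phases} of the coordinates of $z$ — or rather to pass to a real realization — compatibly with the prescribed sign vector $\varepsilon$. Here the hypothesis that $y\in\RealTh{l_C}$ for \emph{every} circuit is exactly what guarantees the sign data is consistent: for each circuit the opposite-sign condition at the two maximizing terms is precisely the condition that, when we reduce the linear form $l_C'$ modulo $t$ after the substitution $x_j\mapsto \varepsilon_j t^{-w_j}x_j$, the constant-coefficient linear form over $\R$ in the variables indexed by the maximizing set admits a solution in the positive orthant of that coordinate subspace. I would then invoke the oriented-matroid structure: consistency of all these local sign conditions, together with the fact that $\circuits$ is the full circuit set of the oriented matroid $M$, lets one build a genuine point of the real linear space $\Var{\Ideal{I}}\subset\RealTorus{n}$ with the correct signs and valuations by a Puiseux-series lifting argument (solving order by order in $t$, the leading order being solvable by the circuit conditions and the higher orders by the linearity of the equations).

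\textbf{Main obstacle.} The routine part is the first inclusion and the bookkeeping with substitutions $x_j\mapsto\varepsilon_j t^{-w_j}x_j$. The real difficulty is the sign-compatibility in the reverse inclusion: one must show that the local opposite-sign conditions at the maximizing sets of all circuits can be simultaneously realized by a single choice of signs $\varepsilon\in\{\pm1\}^n$ and a single point of the real linear space. I expect this to follow from a careful reorientation argument on $M$ — replacing $M$ by ${}_{-\varepsilon}M$ so that one may assume $\varepsilon=(+,\dots,+)$ — after which the statement becomes that the positive part of the real linear space, in the appropriate chart, is nonempty whenever all circuit conditions hold, which is the content (suitably adapted) of the results on positive Bergman fans in \cite{ardilaklivanswilliams} and \cite{tabera}.
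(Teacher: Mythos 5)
The paper does not actually prove this statement: it is quoted verbatim from \cite[Theorem~3.14]{tabera}, so there is no internal argument to compare yours against. Judged on its own terms, your first inclusion is correct and complete: each signed circuit $C$ yields a minimal-support linear form $l_C'\in\Ideal{I}$, and for $x\in\Var{\Ideal{I}}$ the vanishing $l_C'(x)=0$ forces at least two terms of minimal valuation whose leading real coefficients have opposite signs (same-sign leading terms over $\RealPuiseux$ cannot cancel); since each $\RealTh{l_C}$ is closed in $\RealTropicalgroup^n$, the inclusion survives taking the closure in \Cref{defi:realtropicalizedvariety}.

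The reverse inclusion is where your proposal has a genuine gap. First, the opening move -- take $z\in\Var{\Ideal{I}}\subset\ComplexTorus{n}$ with $\complextrop(z)=|y|$ and then ``choose the phases compatibly'' -- does not work as stated: you cannot rescale the coordinates of a point of a linear space by signs and remain in that linear space, so the complex point buys you nothing. The viable route, which you only gesture at, is: reorient $M$ by $\varepsilon$ to reduce to the all-positive chart; observe that $y\in\bigcap_C\RealTh{l_C}$ means the initial oriented matroid $M_{|y|}$ has no positive circuit; invoke the (realizable) oriented-matroid Farkas lemma -- every vector of a subspace is a conformal union of its elementary vectors, so the row space of the coefficient matrix of $\initialform{|y|}{\Ideal{I}}$ contains no nonzero nonnegative vector, whence by Stiemke the kernel meets the open positive orthant -- and finally lift that positive real point of the initial linear space to a point of $\Var{\Ideal{I}}$ over $\RealPuiseux$ with valuation $-|y|$ and signs $\varepsilon$ (easy for linear equations with constant coefficients). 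None of this is carried out in your sketch: you defer the decisive step to ``the results on positive Bergman fans in \cite{ardilaklivanswilliams} and \cite{tabera}'', and the appeal to \cite{tabera} is circular since that is exactly the theorem being proved. Citing \cite[Proposition~4.1]{ardilaklivanswilliams} plus reorientation would be a legitimate (if outsourced) proof, but then you must actually check that reorientation of the ideal by $\varepsilon$ intertwines the charts of $\realtrop(\Var{\Ideal{I}})$ and the reoriented circuit conditions, which is the only real content left.
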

This theorem also holds in the non-constant coefficient case.	


\section{Oriented Matroids and Signed Bergman Fans}\label{sec:signedbergmanfans}

Oriented matroids can be used to study vector configurations over the reals. Their combinatorics and relation to (real) tropical geometry are studied (e.g. in \cite{MR2764796}, \cite{MR2511751}, \cite{MR3612868}, \cite{2017arXiv170801329G}).\smallskip

In the following, we write $\groundset{m}=\{1,\ldots,m\}$ and, if not differently specified, $E=\groundset{m}$ denotes a set for some $m\in\N$. Let $\Ideal{I}\subset\RealLauring{n}$ be a linear ideal and $M$ the associated oriented matroid. Then, the all-positive chart $\realtrop(\Var{\Ideal{I}})\cap(+)^n\times\R^n$ was called the positive Bergman fan of the oriented matroid M. In this section we adapt the results of \cite{ardilaklivanswilliams} for arbitrary sign vectors and other charts of $\realtrop(\Var{\Ideal{I}})$.\smallskip

\subsection{Oriented Matroids}\label{subsec:orientedmatroids}

In order to define oriented matroids we need some terminology concerning \textit{signed sets}:

\begin{definition}[Signed sets]\label{defi:signedsets}
A \textit{signed set} $X$ of $E$ is a subset $\underline{X}\subseteq E$ with a partition $\left( X^+,X^- \right)$ of $\underline{X}$ where $X^+$ is the set of \textit{positive elements} of $X$ and $X^-$ is the set of \textit{negative elements} of $X$. Hence, $\underline{X}=X^+\cup X^-$ is the \textit{support} of $X$ and $|X|$ denotes the cardinality of $\underline{X}$.
\end{definition}

Additionally, we define $X^0 =E\setminus\underline{X}$. We notice that if $X$ is a signed set then $-X$ is also a signed set via $(-X)^+=X^-$ and $(-X)^-=X^+$. By convention, we write $i$ for $i\in X^+$ and $\overline{i}$ for $i\in X^-$. Now, we define an \textit{oriented matroid}:

\begin{definition}[Oriented matroid]\label{defi:orientedmatroid}
An ordered pair $\left( E,\circuits \right)$ consisting of a ground set $E$ and a collection $\circuits$ of signed sets $C$ of $E$ is an \textit{oriented matroid} $M$ if and only if the following conditions are satisfied:
\begin{itemize}
\item[(C0)] $\emptyset\notin\circuits$
\item[(C1)] $\circuits = -\circuits$
\item[(C2)] $\forall X,Y\in\circuits:\If\underline{X}\subseteq\underline{Y},\Then, X=Y\Or X=-Y$
\item[(C3)] $\forall X,Y\in\circuits$ with $X\neq -Y$ and $e\in X^+\cap Y^-$ there is a $Z\in\circuits$ such that
\renewcommand{\labelitemii}{$\bullet$}
\begin{itemize}
\item $Z^+\subseteq\left( X^+\cup Y^+ \right)\setminus\{e\}$                                                                                                           \item $Z^-\subseteq\left( X^-\cup Y^- \right)\setminus\{e\}$                                                                                                                 \end{itemize}
\end{itemize}
\end{definition}

Conditions (C0) to (C3) are called \textit{circuit axioms} of oriented matroids and the elements of $\circuits$ are called \textit{signed circuits}. In the remaining part of this section, let $M$ denote an oriented matroid on the ground set $E=\groundset{n}$ of rank $k$. If nothing else is mentioned we consider all matroids (and its properties) with signs, i.e. we do not always write ``oriented'' explicitly.\smallskip

\begin{remark}[Link to matroids]\label{rema:underlyingmatroid}
If we forget about signs the circuit axioms (C0) - (C3) reduce to the circuit axioms in classical matroid theory (\cite[Chapter 1, Section 1.1]{oxley}). The collection of \textit{signed circuit supports} $\underline{\circuits}=\{\underline{C}:C\in\circuits\}$ forms a collection of circuits of a matroid $\umatroid{M}$ called the \textit{underlying matroid of} $M$. Hence, an oriented matroid $M$ inherits properties of its underlying matroid $\umatroid{M}$ (e.g. its rank).
\end{remark}

\begin{definition}[Reorientation]\label{defi:reorientation}
Let $M=\left( E,\circuits \right)$ be an oriented matroid. For $A\subseteq E$ we define a \textit{reorientation of $M$} with respect to $A$ as follows: for each signed circuit $C\in\circuits$ we define the reoriented signed circuit $\reoriented{A}{C}$ by $\left(\reoriented{A}{C}\right)^+=\left( C^+\setminus A \right)\cup \left( C^-\cap A \right)$ and $\left(\reoriented{A}{C}\right)^-=\left( C^-\setminus A \right)\cup \left( C^+\cap A \right)$. By $\reorientedcircuits{A}$ we denote the set of reoriented signed circuits. The oriented matroid given by $\reorientedcircuits{A}$ is denoted by $\reorientedmatroid{A}{M}$.
\end{definition}

\begin{remark}[Oriented matroids of point configurations]\label{rema:orientedvectormatroids}
Let $\support=\{\alpha_1,\ldots,\alpha_m\}\subset\R^n$ be a point configuration and $A\in\R^{n\times m}$ the representing matrix. An element $\lambda\in\ker(A)$ provides a set $\underline{C}=\{i\in\groundset{m}:\lambda_{i}\neq 0\}$. If $\underline{C}$ is inclusion-minimal then $\lambda$ provides a minimal linear dependence among the columns of $A$ and we call $\underline{C}$ circuit. The partition of $\underline{C}$ into subsets $C^\pm:=\{i\in\groundset{m}:\lambda_i \gtrless 0\}$ provides a signed circuit $C$ whose support is $\underline{C}$. The collection of signed circuits (also denoted by $\circuits$) forms an oriented matroid denoted by $M[A]=\left(\groundset{m},\circuits \right)$ called \textit{oriented vector matroid} (\cite[Theorem 3.2.4]{bjoernerlasvergnassturmfelswhiteziegler}). 
\end{remark}

\begin{example}\label{exam:orientedvectormatroid}
Consider the point configuration $\support=\{\alpha_1,\ldots,\alpha_5\}\subset\R^2$ illustrated by the columns of the matrix
\begin{equation}\notag
A=\begin{pmatrix}
  1 & 0 & 1 & -1 & -2\\
  0 & 1 & 1 & -1 & -1
  \end{pmatrix}.
\end{equation}
The point configuration is shown in \Cref{fig:orientedvectormatroid} (A). The signed circuits of $M[A]$ are \[\{12\overline{3},124,125,34,135,1\overline{4}5,\overline{2}35,\overline{2}\overline{4}5\}.\]
\end{example}

\begin{figure}[ht]
\centering
\def\x{7/3}
\def\y{7/3}
\subcaptionbox{Point configuration $\support\subset\R^2$.}[0.49\linewidth]{
\begin{tikzpicture}
\coordinate (B0) at (0,0);
\draw[] (0,-\y) -- (0,\y);
\draw[] (-\x,0) -- (\x,0);
\draw[color=gray,thin] (-\x,-1) -- (\x,-1);
\draw[color=gray,thin] (-\x,-2) -- (\x,-2);
\draw[color=gray,thin] (-\x,1) -- (\x,1);
\draw[color=gray,thin] (-\x,2) -- (\x,2);
\draw[color=gray,thin] (-1,-\y) -- (-1,\y);
\draw[color=gray,thin] (-2,-\y) -- (-2,\y);
\draw[color=gray,thin] (1,-\y) -- (1,\y);
\draw[color=gray,thin] (2,-\y) -- (2,\y);
\node at (\x+0.5,0) {$x$};
\node at (0,\y+0.5) {$y$};
\coordinate[shape=circle,inner sep=2pt,fill=black] (P1) at (1,0);
\coordinate[shape=circle,inner sep=1pt,label={$x_1$}] (L1) at (0.8,0);
\coordinate[shape=circle,inner sep=2pt,fill=black] (P2) at (0,1);
\coordinate[shape=circle,inner sep=1pt,label={$x_2$}] (L2) at (-0.2,1);
\coordinate[shape=circle,inner sep=2pt,fill=black] (P3) at (1,1);
\coordinate[shape=circle,inner sep=1pt,label={$x_3$}] (L3) at (0.8,1);
\coordinate[shape=circle,inner sep=2pt,fill=black] (P4) at (-1,-1);
\coordinate[shape=circle,inner sep=1pt,label={$x_4$}] (L4) at (-1.2,-1);
\coordinate[shape=circle,inner sep=2pt,fill=black] (P5) at (-2,-1);
\coordinate[shape=circle,inner sep=1pt,label={$x_5$}] (L1) at (-2.2,-1);
\path (0,-3) node[below,opacity=0] {V};
\end{tikzpicture}}
\subcaptionbox{Topes and covectors of $M[A]$.}[0.49\linewidth]{
\def\x{3}
\def\y{3}
\begin{tikzpicture}
\coordinate[shape=circle,inner sep=1pt,fill=black] (B0) at (0,0);
\coordinate[shape=circle,inner sep=2pt,fill=black] (P1) at (1,0);
\coordinate[shape=circle,inner sep=2pt,fill=black] (P5) at (-2,-1);
\coordinate[shape=circle,inner sep=2pt,fill=black] (P2) at (0,1);
\coordinate[shape=circle,inner sep=2pt,fill=black] (P3) at (1,1);
\coordinate[shape=circle,inner sep=2pt,fill=black] (P4) at (-1,-1);
\draw[color=gray] (0,\y) -- (0,-\y);
\draw[color=gray] (-\x,0) -- (\x,0);
\draw[color=gray] (-\x,\y) -- (\x,-\y);
\draw[color=gray] (-\x,\y) -- (\x,-\y);
\draw[color=gray] (-1.5,\y) -- (1.5,-\y);
\draw[dashed,color=red] (P1) -- (B0);
\draw[dashed,color=red] (P2) -- (B0);
\draw[dashed,color=red] (P3) -- (B0);
\draw[dashed,color=red] (P4) -- (B0);
\draw[dashed,color=red] (P5) -- (B0);

\coordinate[shape=circle,inner sep=1pt,label=\tiny{$\begin{pmatrix} +\\-\\+\\-\\-\end{pmatrix}$}] (C1) at (2,-1.7);
\coordinate[shape=circle,inner sep=1pt,label=\tiny{$\begin{pmatrix} +\\-\\-\\+\\-\end{pmatrix}$}] (C2) at (1.9,-3.5);
\coordinate[shape=circle,inner sep=1pt,label=\tiny{$\begin{pmatrix} +\\-\\-\\+\\+\end{pmatrix}$}] (C3) at (0.5,-3.5);
\coordinate[shape=circle,inner sep=1pt,label=\tiny{$\begin{pmatrix} -\\-\\-\\+\\+\end{pmatrix}$}] (C4) at (-1.8,-3.0);
\coordinate[shape=circle,inner sep=1pt,label=\tiny{$\begin{pmatrix} -\\+\\-\\+\\+\end{pmatrix}$}] (C5) at (-2,0);
\coordinate[shape=circle,inner sep=1pt,label=\tiny{$\begin{pmatrix} -\\+\\+\\-\\+\end{pmatrix}$}] (C6) at (-1.9,2);
\coordinate[shape=circle,inner sep=1pt,label=\tiny{$\begin{pmatrix} -\\+\\+\\-\\-\end{pmatrix}$}] (C7) at (-0.5,2);
\coordinate[shape=circle,inner sep=1pt,label=\tiny{$\begin{pmatrix} +\\+\\+\\-\\-\end{pmatrix}$}] (C8) at (1.8,1);
\end{tikzpicture}}
\caption{Point configuration and topes of \Cref{exam:orientedvectormatroid}.}
\label{fig:orientedvectormatroid}
\end{figure}
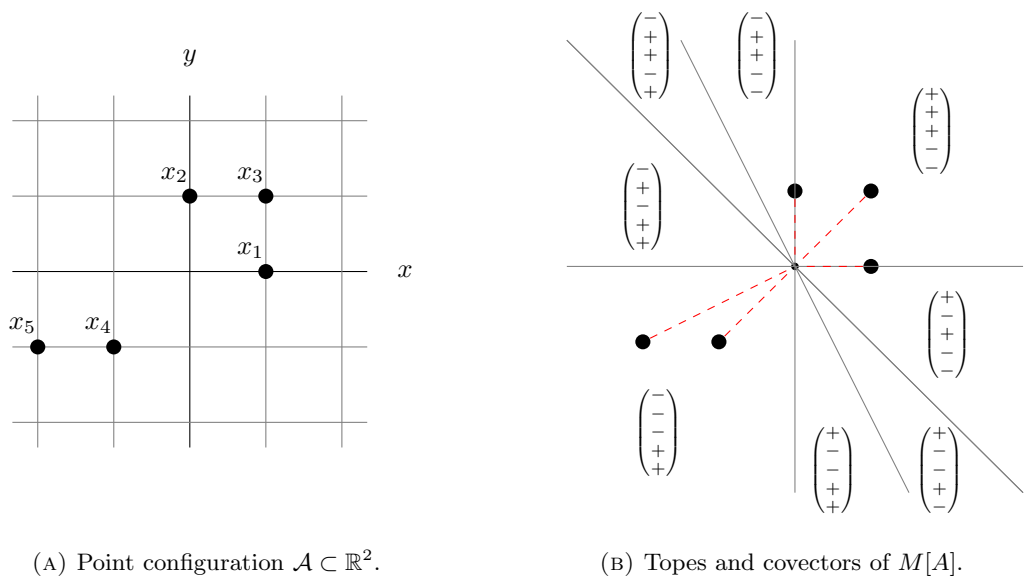

Another approach to oriented matroids is provided by \textit{covectors}. Even though the concept of covectors allows to define an oriented matroid with an axiomatic system equivalent to \Cref{defi:orientedmatroid} (cf. \cite[Chapter 4, \S 1]{bjoernerlasvergnassturmfelswhiteziegler}) we assume that, from now on, oriented matroids always arise from point configurations $\support\subset\R^n$, i.e. we consider oriented vector matroids.

\begin{remark}[Sign vectors and signed sets]\label{rema:signvectorsandsignedsets}
We identify signed sets $X$ of $E$ and sign vectors $s\in\svector$ via \[s_e = \pm \quad\Leftrightarrow\quad e\in X^{\pm}\quad\And{}\quad s_{e}=0\quad\Leftrightarrow\quad e\in X^{0}.\]
We denote the associated elements to $X$ and $s$ by $s(X)$ and $X(s)$ respectively. Via this identification we define $s^+=X^+$ and $s^-,s^0$ analogously.
\end{remark}

\begin{remark}\label{rema:signoperations}
For $a,b\in\svector$ two sign vectors let $a\cdot b$ denote the sign vector obtained by multiplying $a$ and $b$ componentwise, i.e. $\left( a\cdot b \right)_i = a_ib_i$. We define a partial order on $\svector$ by \[a\subseteq b\qquad:\Longleftrightarrow\qquad a^+ \subseteq b^+~\text{and}~a^-\subseteq b^-.\]
\end{remark}

\begin{definition}[Covector]\label{defi:covector}
Let $\support=\{\alpha_{1},\ldots,\alpha_{m}\}\subset\R^n$ be a point configuration and $M[A]$ the oriented vector matroid associated to the matrix representation $A\in\R^{n\times m}$ of $\support$. A sign vector $s\in\svector$ is called \textit{covector} of $M[A]$ if there is an element $y\in\Dual{\R^n}$ such that
\begin{equation}\notag
s = \big( \sign\left( y(\alpha_1) \right),\ldots,\sign\left( y(\alpha_m) \right) \big).
\end{equation}
The set of covectors of $M[A]$ is denoted by $\cvector{M[A]}\subseteq\svector$.
\end{definition}

\begin{remark}[Topes]\label{rema:topes}
If $v\in\cvector{M}$ is pure then $v$ is called \textit{tope}. We denote the set of topes by $\topes$. Thus, $\Dual{\R^n}$ is subdivided by the hyperplanes defined by $\alpha\in\support$ into cells where each full dimensional cell is indexed by a tope of $M$. See \Cref{fig:orientedvectormatroid} for an example.
\end{remark}

\begin{remark}[Reorientation via sign vectors]\label{rema:reorientation}
In \Cref{defi:reorientation} we explained how to obtain the reoriented matroid $\reorientedmatroid{A}{M}$ of $M$ according to a subset $A\subset E$. However, the set $A$ defines a signed set $\tilde{A}$ by $\tilde{A}^-=A$ and $\tilde{A}^+ = E\setminus A^-$. Using the identification in \Cref{rema:signvectorsandsignedsets}, $\tilde{A}$ provides a sign vector $s(\tilde{A})$ defined by $s(\tilde{A})_e=\pm$ if and only $e\in\tilde{A}^\pm$. Multiplying a sign vector $v\in\svector$ with $s(\tilde{A})$ can be understood as ''changing signs of $v$ at coordinates indexed by $A$``. Let $C\in\circuits$ be a signed circuit and let $s_C=s(C)$ denote the sign vector associated to $C$. Then, the reoriented signed circuit $\reoriented{A}{C}$ equals the signed circuit $C\left(s(\tilde{A})\cdot s_C\right)$. By abuse of notation we write $s(\tilde{A})=\reoriented{A}{s}$ to indicate that we want to ``change signs at $A$''. As covectors are sign vectors, the reorientation of a covector $v\in\cvector{M}$ with respect to $A\subset E$ means switching signs at $A$, i.e. $v\in\cvector{M}$ translates to $\reoriented{A}{s}\cdot v\in\cvector{\reorientedmatroid{A}{M}}$ (cf. \cite[Lemma 4.18]{bjoernerlasvergnassturmfelswhiteziegler} and \Cref{rema:signoperations}).
\end{remark}

\subsection{Signed Bergman Fans}\label{subsec:signedbergmanfans}

\begin{definition}[s-acyclic matroid]\label{defi:sacyclic}
Let $M$ be an oriented matroid and $s\in\svector$ a pure sign vector. We call the oriented matroid $M$ \textit{$s$-acyclic} if there is no circuit $C\in\circuits$ with $s_C\subseteq s$.
\end{definition}

\begin{remark}
Notice that $s_C\subseteq s$ is equivalent to $s_C^+ \subseteq s^+$ and $s_C^-\subseteq s^-$ (cf. \Cref{rema:signoperations}), i.e. for each $e\in E$ we have $\left(s_c\cdot s \right)_e\in\{0,+\}$. This allows to reformulate the definition in terms of sign vectors as follows: let $s\in\svector$ be a pure sign vector. Then, $M$ is $s$-acyclic if $s_C\cdot s\nsubseteq \left( + \right)^n$ for all $C\in\circuits$. 
\end{remark}

\begin{remark}\label{rema:allpositive}
The situation for $s=(+)^n$ is well-known and was studied in \cite{ardilaklivanswilliams}, \cite{bjoernerlasvergnassturmfelswhiteziegler}. There, an oriented matroid $M$ is called \textit{acyclic} if there is no all-positive circuit in $M$. This case is covered by \Cref{defi:sacyclic}.
\end{remark}

\begin{lemma}\label{lemm:acyclic}
Let $M$ be a matroid on $E=\groundset{n}$, $A\subseteq E$ a subset and $s={}_{-A}s\in\mathcal{S}$ the pure sign vector associated to $A$ for reorientation, i.e. $s^-=A$ and $s^+=E\setminus A$. Then:
\begin{equation}\notag
M~\text{is}~s\text{-acyclic}\quad\Longleftrightarrow\quad {}_{-A}M~\text{is acyclic}\quad\Longleftrightarrow\quad s~\text{is a tope of}~M.
\end{equation}
\end{lemma}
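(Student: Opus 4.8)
The plan is to prove the two equivalences in a cycle, using the reorientation dictionary from \Cref{rema:reorientation} to move between $M$ and $\reorientedmatroid{A}{M}$, and then the definition of covector/tope to pin down the all-positive case.

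First I would establish the left equivalence, namely that $M$ is $s$-acyclic if and only if $\reorientedmatroid{A}{M}$ is acyclic. By \Cref{rema:reorientation}, the signed circuits of $\reorientedmatroid{A}{M}$ are exactly the sign vectors $\reoriented{A}{s}\cdot s_C$ for $C\in\circuits$, where $\reoriented{A}{s}=s$ is precisely the sign vector with $s^-=A$, $s^+=E\setminus A$. So I would simply observe the chain of equivalences: $M$ is $s$-acyclic $\Leftrightarrow$ there is no $C\in\circuits$ with $s_C\cdot s\subseteq(+)^n$ (the reformulation in the Remark after \Cref{defi:sacyclic}) $\Leftrightarrow$ there is no $C\in\circuits$ with $s\cdot s_C$ all-positive $\Leftrightarrow$ $\reorientedmatroid{A}{M}$ has no all-positive circuit $\Leftrightarrow$ $\reorientedmatroid{A}{M}$ is acyclic (by \Cref{rema:allpositive}). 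The only thing to be careful about is that multiplication of sign vectors is componentwise and commutative, so $s_C\cdot s$ and $s\cdot s_C$ agree; this is immediate from \Cref{rema:signoperations}.

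Next I would handle the right equivalence, that $\reorientedmatroid{A}{M}$ is acyclic if and only if $s$ is a tope of $M$. The direction I would argue first is that $\reorientedmatroid{A}{M}$ is acyclic exactly when $(+)^n$ is a tope of $\reorientedmatroid{A}{M}$: a standard fact about oriented matroids is that an oriented matroid is acyclic iff $(+)^n$ is a covector (equivalently a tope, since it is pure), because an all-positive covector certifies that there is a linear functional positive on all the vectors, which is incompatible with an all-positive circuit, and conversely acyclicity via Farkas/Minkowski yields such a functional. I may cite this as the analogue of the classical statement in \cite{bjoernerlasvergnassturmfelswhiteziegler}. Then by the reorientation rule for covectors in \Cref{rema:reorientation}, $v\in\cvector{M}\Leftrightarrow \reoriented{A}{s}\cdot v=s\cdot v\in\cvector{\reorientedmatroid{A}{M}}$; applying this with the goal covector $(+)^n$ on the reoriented side, $(+)^n\in\cvector{\reorientedmatroid{A}{M}}$ iff $s\cdot(+)^n=s\in\cvector{M}$, and since $s$ is pure this says exactly that $s$ is a tope of $M$. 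Chaining: $\reorientedmatroid{A}{M}$ acyclic $\Leftrightarrow (+)^n$ a tope of $\reorientedmatroid{A}{M}\Leftrightarrow s$ a tope of $M$.

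The main obstacle is the lemma "acyclic $\Leftrightarrow$ $(+)^n$ is a covector," which is the one genuinely non-formal input; everything else is bookkeeping with the reorientation correspondence. In the vector-matroid setting we have adopted (all oriented matroids here come from point configurations $\support\subset\R^n$, cf. the paragraph after \Cref{exam:orientedvectormatroid}), this statement is transparent: $(+)^n$ being a covector means there is $y\in\Dual{\R^n}$ with $y(\alpha_i)>0$ for all $i$, i.e. a strictly separating functional, which by Gordan's theorem is equivalent to $0$ not lying in the convex hull of the $\alpha_i$, which in turn is equivalent to the absence of an all-positive circuit (an all-positive circuit is precisely a positive affine dependence). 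So I would give this as a short self-contained argument in the vector case rather than quoting the general oriented-matroid theorem, and the rest of the proof is the two formal chains above.
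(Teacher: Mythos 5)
Your proof is correct and follows essentially the same route as the paper's: both equivalences are handled by the reorientation dictionary (circuits of $\reorientedmatroid{A}{M}$ are $s\cdot s_C$, covectors are $s\cdot v$), with the key input being that an oriented matroid is acyclic iff $(+)^n$ is a tope, for which the paper cites \cite[Proposition 3.4.8]{bjoernerlasvergnassturmfelswhiteziegler} and for which your Gordan's-theorem argument is a valid self-contained substitute in the realizable setting the paper works in. The only nitpick is that an all-positive circuit of a vector configuration is a positive \emph{linear} (not affine) dependence, which does not affect the argument.
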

\begin{proof}
By definition, $M$ is $s$-acyclic if and only if $s_C\cdot s\nsubseteq (+)^n$ for all $C\in\circuits$. Due to the reorientation via sign vectors, we have $s\cdot s_C = {}_{-A}s\cdot s_C$, which is the sign vector of the reoriented circuit ${}_{-A}C$. All circuits of ${}_{-A}M$ are of the form ${}_{-A}C$ for all $C\in\circuits$. Thus, $M$ is $s$-acyclic is equivalent to ${}_{-A}C$ is not all-positive for all $C\in\circuits$. Equivalently, ${}_{-A}M$ is acyclic.\\
Note that by \cite[Proposition 3.4.8]{bjoernerlasvergnassturmfelswhiteziegler}, an oriented matroid $M$ is acyclic if and only if $(+)^n$ is a tope. We know that signed circuits as well as covectors of ${}_{-A}M$ are obtained from $M$ by multiplying with $s={}_{-A}s$. Hence, the above equivalence translates to ${}_{-A}M$ is acyclic if and only if $s\cdot (+)^n$ is a tope. To see the second equivalence, suppose that ${}_{-A}M$ is acyclic. Equivalently, $(+)^n$ is a tope of ${}_{-A}M$. Applying $s$ for reorientation means that $s\cdot (+)^n = s$ is a tope of ${}_{-A}\left( {}_{-A}M \right) = M$.
\end{proof}

\begin{remark}[Oriented initial matroids]
Oriented initial matroids are identically defined as unoriented initial matroids. In short, we consider an element $w\in\R^n$ as a weight function on $E$. The \textit{signed initial circuit} of a signed circuit $C\in\circuits$ is defined by the sets of positive/negative elements \[\initialcircuit{w}{C}^\pm=\left\{j\in C^\pm |w_j = \max_{i\in \underline{C}}\left\{w_i\right\}\right\}.\] In particular, for a signed circuit $C\in\circuits$ we have $\underline{\initialcircuit{w}{C}}=\initialcircuit{w}{\underline{C}}$. The collection of inclusion-minimal signed initial circuits $\initialcircuit{w}{C}$ for all $C\in\circuits$ is denoted by $\initialcircuits{w}$. The oriented initial matroid $M_w$ is defined by the signed initial circuits $\initialcircuits{w}$ (i.e. it is an oriented matroid according to \Cref{defi:orientedmatroid}, see \cite[Proposition 2.3]{ardilaklivanswilliams} for a proof).
\end{remark}

\begin{remark}[Flags of subsets and weight classes.]
For a fixed $w\in\R^n$ let $\flag(w)$ denote the flag of subsets $\emptyset \subset F_1 \subset \ldots \subset F_k=E$  such that $w$ is constant on $F_{i+1}\setminus F_i$ and $w_{F_i\setminus F_{i-1}} < w_{F_{i+1}\setminus F_i}$. The weight class of $w$ is the set of $v\in\R^n$ such that $\flag(w) = \flag(v)$.
\end{remark}

Since $M_w$ depends only on the flag $w$ is in we also refer to this initial matroid as $M_\flag$.

\begin{definition}[$s$-flat/$s$-flag]\label{defi:sflatsflag}
Let $M$ be an oriented matroid on $E$ and $s\in\topes$ a tope. A flat $F$ of $M$ is called \textit{$s$-flat} if there is a covector $v\in\cvector{M}$ such that $v\subseteq s$ and $F=v^0$. A flag of flats $\flag$ is called \textit{$s$-flag} if all flats of $\flag$ are $s$-flats. We define $F_{i,j}=F_i\setminus F_{j}$ for all $0\leq j\leq i\leq k$ where $F_0=\emptyset$.
\end{definition}


\begin{remark}\label{rema:chaincovectors}
Suppose $\flag=(F_1,\ldots,F_k)\triangleleft M$ is an $s$-flag of an oriented matroid for a tope $s\in\topes$. Let $v_1,\ldots,v_k\in\cvector{M}$ be the set of covectors such that $v_i^0=F_i$ and $v_i\subseteq s$. As $v_{i}\subseteq s$ and $F_{i}\subset F_{i+1}$ for all $i$, we have $v_{i+1}\subseteq v_i$. Thus, $s$-flags correspond to chains of covectors ordered by ``$\subseteq$'' (cf. \Cref{rema:signoperations}) ending with $s$. Moreover, $v_i$ coincides with $v_{i+1}$ at all coordinates where $v_{i+1}$ is non-zero. Note that $v_i\neq 0$ at $F_{k+1,i}$ and $v_i$ differs from $v_{i+1}$ at $F_{i+1,i}$.
\end{remark}

\begin{remark}[Big face lattice]\label{rema:bigfacelattice}
The collection of flats of $\umatroid{M}$ equals the collection of zero sets $v^0$ of covectors $v\in\cvector{M}$ (\cite[Proposition 4.1.13]{bjoernerlasvergnassturmfelswhiteziegler}). The covectors $\cvector{M}$ of $M$, equipped with the induced partial order $\subseteq$ of $\svector$ (cf. \Cref{rema:signoperations}) and bottom/top elements $\hat{0}$/$\hat{1}$, form a lattice $\biglattice\left( M \right)=\left( \cvector{M}\cup\left\{\hat{0},\hat{1}\right\},\subseteq \right)$ called the \textit{big face lattice} of $M$. 
\end{remark}

\begin{example}
Recall the oriented vector matroid $M[A]$ given by the matrix
\begin{equation}
A=\begin{pmatrix}
  1 & 0 & 1 & -1 & -2\\
  0 & 1 & 1 & -1 & -1
  \end{pmatrix}.
\end{equation}
The point configuration $\support$ defined by the columns of $A$ is shown in \Cref{fig:orientedvectormatroid}. The subdivision of $\Dual{\R^2}$ into topes of $M[A]$ is shown in \Cref{fig:orientedvectormatroid}. Consider the chains of covectors \[\mathcal{S}:~v_1=\begin{bmatrix}
0\\
0\\
0\\
0\\
0
\end{bmatrix}\subseteq
v_2=\begin{bmatrix}
-\\
+\\
0\\
0\\
+
\end{bmatrix}\subseteq
v_3=\begin{bmatrix}
-\\
+\\
+\\
-\\
+
\end{bmatrix}
\quad\And{}\mathcal{S}':~v_1=\begin{bmatrix}
0\\
0\\
0\\
0\\
0
\end{bmatrix}\subseteq
v_2=\begin{bmatrix}
-\\
+\\
0\\
0\\
+
\end{bmatrix}\subseteq
v_3'=\begin{bmatrix}
-\\
+\\
-\\
+\\
+
\end{bmatrix}.\label{eq:chainsofcovectors}\] Let $\flag=(v_1^0,v_2^0,v_3^0)$ and $\flag'=(v_1^0,v_2^0,{v_3^0}')$ denote the flags of flats arising from the zero sets of the covectors of $\mathcal{S}$ and $\mathcal{S}'$, cf. \Cref{eq:chainsofcovectors}. Note that
\begin{equation}\notag
\flag=\flag'=(F_1,F_2,F_3)\With{} F_1=E,~F_2=\{3,4\}\And F_3=\emptyset,
\end{equation}
whereas $\mathcal{S}\neq\mathcal{S}'$. If we pick $s=v_3\in\pvector{5}$ (or $s'=v_3'\in\pvector{5}$) then $\flag$ is an $s$-flag ($s'$-flag respectively). For a fixed tope $s\in\topes$, the $s$-flats correspond to faces of the cell corresponding to $s$ in $\Dual{\R^2}$. Hence, all $s$-flags of $M$ correspond to collections of faces of the cell of $\Dual{\R^2}$ dual to $s$ ordered by inclusion.
\end{example}

\begin{definition}[Signed Bergman fan]\label{defi:signedBergmanfan}
The \textit{signed Bergman fan} of an oriented matroid $M$ on the ground set $E=\groundset{n}$ with respect to a pure sign vector $s\in\svector$ is defined by \[\signedbergmanfan{s}{M} := \left\{w\in\R^n | M_w~\text{is}~s\Acyclic\right\}.\]
\end{definition}

The signed Bergman fan $\signedbergmanfan{s}{M}$ with respect to $s=(+)^n$ is called \textit{positive Bergman fan} $\mathcal{B}^+\left( M \right)$ (\cite{ardilaklivanswilliams}). There, a covector $v\in\cvector{M}$ is called \textit{positive} if $v^-=\emptyset$ and a flat is positive if it is a $\left( + \right)^n$-flat. For the positive Bergman fan, i.e. $s=(+)^n$, we have the following theorem:
\begin{theorem}[{\cite[Theorem 3.4]{ardilaklivanswilliams}}]\label{theo:positivebergmanfan}
Given an oriented matroid $M$ and $w\in\R^n$ which corresponds to a flag $\flag=\flag\left( w \right)$, the following are equivalent:
\begin{itemize}
\item[1.] $\mathcal{M}_\flag$ is acyclic.
\item[2.] For each signed circuit $C$ of $M$, $\initialform{w}{C}$ contains a positive and negative element of $C$.
\item[3.] $\flag$ is a flag of positive flats of $M$.
\end{itemize}
\end{theorem}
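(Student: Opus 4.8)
I would prove the three equivalences in the order $(1)\Leftrightarrow(2)$, then $(3)\Rightarrow(2)$, then $(2)\Rightarrow(3)$, the last being the substantive one. Throughout I use three standard facts: an oriented matroid is acyclic iff $(+)^n$ is one of its topes (\cite[Proposition 3.4.8]{bjoernerlasvergnassturmfelswhiteziegler}, the case $A=\emptyset$ of \Cref{lemm:acyclic}); flats of $\umatroid{M}$ are exactly the zero sets of covectors of $M$ (\Cref{rema:bigfacelattice}); and, for a contraction $M/F$, every signed circuit of $M/F$ is the restriction $C\restrictto{E\setminus F}$ of some $C\in\circuits$ with $\underline{C}\not\subseteq F$, while $\cvector{M/F}=\{v\restrictto{E\setminus F}:v\in\cvector{M},\ \underline{v}\cap F=\emptyset\}$ (\cite[\S 3.3]{bjoernerlasvergnassturmfelswhiteziegler}). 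For $(1)\Leftrightarrow(2)$: by construction the signed circuits of $M_\flag$ are the inclusion-minimal signed sets among $\{\initialform{w}{C}:C\in\circuits\}$, each of nonempty support. If $(2)$ fails, some $\initialform{w}{C}$ is one-signed; replacing $C$ by $-C$ we may assume $\initialform{w}{C}^-=\emptyset$, and a minimal element of $\{\initialform{w}{C'}:C'\in\circuits,\ \initialform{w}{C'}\subseteq\initialform{w}{C}\}$ is then, being inclusion-minimal among all $\initialform{w}{C''}$, a signed circuit of $M_\flag$ with empty negative part, so $M_\flag$ is not acyclic. Conversely an all-positive circuit of $M_\flag$ equals $\initialform{w}{C}$ for some $C$ and witnesses the failure of $(2)$. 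Hence $M_\flag$ is acyclic iff $(2)$ holds.

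For $(3)\Rightarrow(2)$, write $\flag=(\emptyset=F_0\subset\cdots\subset F_k=E)$ and choose for each $i$ a positive covector $v_i\in\cvector{M}$ with $v_i^0=F_i$ (for $i=0$ this is $v_0=(+)^n$, a tope of $M$ because $(3)$ asserts $\emptyset$ is a positive flat). Given $C\in\circuits$, let $j$ be least with $\underline{C}\subseteq F_j$; since $w$ is constant on each level $F_l\setminus F_{l-1}$ and increases with $l$, the maximum of $w$ on $\underline{C}$ is attained exactly on $\underline{C}\setminus F_{j-1}$, which equals $\underline{v_{j-1}}\cap\underline{C}$ and is nonempty. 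Orthogonality of the covector $v_{j-1}$ with the circuit $C$ then yields $e,f\in\underline{C}\setminus F_{j-1}$ with $(v_{j-1})_eC_e=+$ and $(v_{j-1})_fC_f=-$; since $v_{j-1}$ is positive this forces $C_e=+$ and $C_f=-$, so $\initialform{w}{C}$ contains a positive and a negative element of $C$, which is $(2)$.

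For $(2)\Rightarrow(3)$: from $(2)$, no $\initialform{w}{C}$ is one-signed, hence none reduces to a single element. If some $F_{i-1}$ were not a flat, pick $e\in\closure{F_{i-1}}\setminus F_{i-1}$ and a circuit $\underline{C}\subseteq F_{i-1}\cup\{e\}$ with $e\in\underline{C}$; since $e$ lies in a strictly higher level of the flag than every other element of $\underline{C}$, it is the unique largest-weight element, so $\initialform{w}{C}$ is a single element — a contradiction. Hence every $F_i$ is a flat (for $F_k=E$ use that $M$ has no loop). Next I claim each contraction $M/F_i$ is acyclic. If not, $M/F_i$ has an all-positive signed circuit, which by the contraction fact equals $C\restrictto{E\setminus F_i}$ for some $C\in\circuits$ with $\underline{C}\not\subseteq F_i$; then $C^-\subseteq F_i$, while $\underline{C}$ meets a level above $i$, so the largest-weight elements of $\underline{C}$ avoid $F_i$ and thus $\initialform{w}{C}^-\subseteq C^-\cap(E\setminus F_i)=\emptyset$. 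As $\initialform{w}{C}$ has nonempty support it is all-positive, contradicting $(2)$. So $M/F_i$ is acyclic, hence has the all-positive tope, which by the contraction description of covectors lifts to a positive $v\in\cvector{M}$ with $v^0=F_i$. Therefore every $F_i$ is a positive flat, which is $(3)$; the argument is uniform and covers the degenerate levels $F_0=\emptyset$ ($M/\emptyset=M$) and $F_k=E$ ($M/E$ trivial).

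The main obstacle is this last step of $(2)\Rightarrow(3)$: recognizing a positive flat as exactly a flat $F$ with $M/F$ acyclic, and propagating non-acyclicity to the contraction in a way compatible with the flag — i.e. matching the restriction of a signed circuit of $M$ to $E\setminus F_i$ against the maximal-weight level. The rest is bookkeeping with orthogonality, the defining description of $M_\flag$, and the acyclicity/tope dictionary.
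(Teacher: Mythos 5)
The paper does not prove this statement; it is quoted verbatim from \cite[Theorem 3.4]{ardilaklivanswilliams}, so there is no in-paper proof to compare against and I am judging your argument on its own. Your implications $(2)\Rightarrow(1)$, $(3)\Rightarrow(2)$ and $(2)\Rightarrow(3)$ are correct: the orthogonality argument (a circuit $C$ and the positive covector $v_{j-1}$ with $v_{j-1}^0=F_{j-1}$ meet in both signs on $\underline{C}\setminus F_{j-1}=\underline{\initialform{w}{C}}$) and the contraction argument (an all-positive circuit of $M/F_i$ would lift to a circuit $C$ of $M$ with $\initialform{w}{C}$ all-positive) are clean. You are also right to read $(3)$ as including $F_0=\emptyset$ as a positive flat, i.e.\ as requiring $(+)^n\in\cvector{M}$; this is genuinely needed (for the three vectors $e_1,-e_1,e_2$ with $w=(0,0,1)$, every nonempty member of the flag is a positive flat yet $(1)$ and $(2)$ fail), and you both use it in $(3)\Rightarrow(2)$ and reprove it in $(2)\Rightarrow(3)$.

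The genuine gap is in $(1)\Rightarrow(2)$, at the parenthetical claim that a minimal element $Y$ of $T=\{\initialform{w}{C'}:\initialform{w}{C'}\subseteq\initialform{w}{C}\}$ is ``inclusion-minimal among all $\initialform{w}{C''}$.'' The circuits of $M_\flag$ are the initial circuits of minimal \emph{support}, whereas minimality of $Y$ inside $T$ only excludes initial circuits of smaller support that are \emph{signed}-contained in the all-positive $\initialform{w}{C}$; a priori there could be a mixed-sign $\initialform{w}{C''}$ with $\underline{\initialform{w}{C''}}\subsetneq\underline{Y}$, in which case $Y$ is not a circuit of $M_\flag$ and your witness of non-acyclicity evaporates. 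The claim is true, but proving it is exactly the substance of this direction: one must know that $\initialform{w}{C}=C\restrictto{F_j\setminus F_{j-1}}$ is a \emph{vector} of the minor $\left(M\restrictto{F_j}\right)/F_{j-1}$, whose circuits are precisely the circuits of $M_\flag$ supported in that level, and then apply the conformal decomposition of a nonzero vector into circuits: each circuit in the decomposition of $Y$ is signed-contained in $Y$, hence lies in $T$, so minimality of $Y$ forces $Y$ to be such a circuit. As written, the crux of $(1)\Rightarrow(2)$ is asserted rather than proved; and since your remaining implications only yield $(2)\Leftrightarrow(3)$ and $(2)\Rightarrow(1)$, this is a load-bearing omission. (Equivalently, you could prove $(1)\Rightarrow(3)$ directly by the same contraction-plus-conformal-decomposition argument and drop $(1)\Rightarrow(2)$ altogether.)
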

Now, we generalize \Cref{theo:positivebergmanfan} for any pure sign vector $s\in\svector$:
\begin{theorem}\label{theo:signedbergmanfan}
Let $M$ be an oriented matroid on $E$, $s\in\svector$ a pure sign vector, $w\in\R^n$ and $\flag=\flag(w)$ the corresponding flag. The following are equivalent:
\begin{enumerate}
\item[1.] $M_\flag$ is $s$-acyclic.
\item[2.] For all $C\in\circuits$, $w$ attains its maximum at $\left( s\cdot s_C \right)^+$ and $\left( s\cdot s_C \right)^-$.
\item[3.] $\flag$ is a $s$-flag of flats of $M$.
\end{enumerate}
\end{theorem}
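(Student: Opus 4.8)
The plan is to reduce the statement to the positive case, \Cref{theo:positivebergmanfan}, by the reorientation trick of \Cref{lemm:acyclic}. Put $A=s^-$, so that $s={}_{-A}s$ in the sense of \Cref{rema:reorientation}: $s$ is the pure sign vector with $s^-=A$ and $s^+=E\setminus A$. Since $M_\flag$ is itself an oriented matroid, applying \Cref{lemm:acyclic} to $M_\flag$ in place of $M$ shows that $M_\flag$ is $s$-acyclic if and only if $\reorientedmatroid{A}{(M_\flag)}$ is acyclic.

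The first key step is the compatibility of the initial-matroid construction with reorientation, namely $\reorientedmatroid{A}{(M_\flag)}=(\reorientedmatroid{A}{M})_\flag$. For a signed circuit $C\in\circuits$ one has $\underline{\reoriented{A}{C}}=\underline{C}$, hence $\underline{\initialcircuit{w}{\reoriented{A}{C}}}=\initialcircuit{w}{\underline{C}}=\underline{\initialcircuit{w}{C}}$; a direct check from \Cref{defi:reorientation} and the defining formula for $\initialcircuit{w}{\cdot}$ gives moreover $\initialcircuit{w}{\reoriented{A}{C}}=\reoriented{A}{\bigl(\initialcircuit{w}{C}\bigr)}$ as signed sets, because selecting the $w$-maximal indices in $\underline{C}$ commutes with switching signs at $A$. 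Since $C\mapsto\reoriented{A}{C}$ is a bijection $\circuits\to\reorientedcircuits{A}$ preserving supports, inclusion-minimality of $\initialcircuit{w}{C}$ is preserved, so the signed initial circuits of $\reorientedmatroid{A}{M}$ are exactly the reorientations at $A$ of those of $M$; this is the claimed identity. Consequently $M_\flag$ is $s$-acyclic if and only if $(\reorientedmatroid{A}{M})_\flag$ is acyclic.

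Now apply \Cref{theo:positivebergmanfan} to the oriented matroid $\reorientedmatroid{A}{M}$ with the same $w$ and the same flag $\flag=\flag(w)$ — which is a flag of flats of $\reorientedmatroid{A}{M}$ since $\umatroid{\reorientedmatroid{A}{M}}=\umatroid{M}$. It yields: $(\reorientedmatroid{A}{M})_\flag$ is acyclic $\iff$ for every signed circuit $C'$ of $\reorientedmatroid{A}{M}$, $\initialform{w}{C'}$ contains a positive and a negative element of $C'$ $\iff$ $\flag$ is a flag of positive flats of $\reorientedmatroid{A}{M}$. It remains to translate the last two conditions back to $M$. Every signed circuit of $\reorientedmatroid{A}{M}$ is $\reoriented{A}{C}$ for some $C\in\circuits$, with support $\underline{C}$ and associated sign vector $s\cdot s_C$ (using $s={}_{-A}s$); thus ``$\initialform{w}{\reoriented{A}{C}}$ contains a positive and a negative element of $\reoriented{A}{C}$'' says precisely that $\max_{i\in\underline{C}}w_i$ is attained both at an index in $(s\cdot s_C)^+$ and at an index in $(s\cdot s_C)^-$ (note $\underline{C}=(s\cdot s_C)^+\cup(s\cdot s_C)^-$ since $s$ is pure), which is condition (2). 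For condition (3), by \Cref{rema:reorientation} the covectors of $\reorientedmatroid{A}{M}$ are exactly the $s\cdot u$ with $u\in\cvector{M}$; such a covector is positive, i.e. $(s\cdot u)^-=\emptyset$, if and only if $u^+\subseteq s^+$ and $u^-\subseteq s^-$, that is $u\subseteq s$, and then $(s\cdot u)^0=u^0$. Hence the positive flats of $\reorientedmatroid{A}{M}$ are exactly the sets $v^0$ with $v\in\cvector{M}$ and $v\subseteq s$ — the $s$-flats of $M$ of \Cref{defi:sflatsflag} — so ``flag of positive flats of $\reorientedmatroid{A}{M}$'' means ``$s$-flag of flats of $M$'', which is condition (3). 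Chaining the equivalences completes the proof.

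The part I expect to require the most care is the commutation identity $\reorientedmatroid{A}{(M_\flag)}=(\reorientedmatroid{A}{M})_\flag$: one must verify both that supports of initial circuits are untouched by reorientation and that passing to \emph{inclusion-minimal} signed initial circuits interacts correctly with the global sign switch, so that no spurious minimal signed initial circuit is created or destroyed. Everything else is a mechanical translation through the identification of signed sets with sign vectors and through \Cref{rema:reorientation}.
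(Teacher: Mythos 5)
Your proposal is correct and follows essentially the same route as the paper: reorient at $A=s^-$, use \Cref{lemm:acyclic} together with the commutation $\reorientedmatroid{A}{(M_\flag)}=(\reorientedmatroid{A}{M})_\flag$, and translate circuits and covectors of $\reorientedmatroid{A}{M}$ back to $M$ via multiplication by $s$. The only (harmless) difference is that you route condition (2) through \Cref{theo:positivebergmanfan} as well, whereas the paper argues $1.\Leftrightarrow 2.$ directly from the definition of acyclicity; your explicit verification of the commutation identity is in fact more careful than the paper's one-line justification.
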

\begin{proof}
At first, we set $A=s^-$, i.e. we reorientate with respect to $A$. Note that reorienting and initializing a matroid $M$ commutes, i.e. for $A\subseteq E$ and $\flag=\flag(w)$ for $w\in\R^n$ we have
\begin{equation}\notag
{}_{-A}\left( M_\flag \right) = \left( {}_{-A}M \right)_\flag.
\end{equation}
The reason for this is that $w$ picks the elements of a (signed) circuit $C$ where $w$ is maximal independently from the signs. Circuits of the underlying matroid $\umatroid{M}$ remain invariant under reorientation. Also note that covectors $v\in\cvector{M}$ translate to covectors $s\cdot v\in\svector_{{}_{-A}M}$ of ${}_{-A}M$ (\cite[Section 3 + Lemma 4.18]{bjoernerlasvergnassturmfelswhiteziegler}) and finally, ${}_{-A}\left( {}_{-A}M \right)=M$ and $s\cdot \left( s\cdot v \right)=v$.\\
For $1.\Rightarrow 2.$ suppose $M_\flag$ is $s$-acyclic. Thus, $\reorientedmatroid{A}{\left(M_\flag\right)}=\left(\reorientedmatroid{A}{M}\right)_\flag$ is acyclic (cf. \Cref{lemm:acyclic}) and we have $s(\initialcircuit{w}{\reoriented{A}{C}})\nsubseteq(+)^n$ for all circuits $C\in\circuits$. Since reorientation commutes with initialization, we have $\reoriented{A}{s}\cdot s(\initialcircuit{w}{C})\nsubseteq(+)^n$ for all $C\in\circuits$. Consequently, for all $C\in\circuits$ exist $e,f\in E$ such that (w.l.o.g.) $(s)_e (s(\initialcircuit{w}{C}))_e = -$ and $(s)_f (s(\initialcircuit{w}{C}))_f = +$. Since $\initialcircuit{w}{C}\subseteq C$ we conclude that for all $C\in\circuits$ exist $e,f\in E$ such that $(s)_e (s(C))_e=-$ and $(s)_f (s(C))_f =+$. In other words, for all $C\in\circuits$ holds that $w$ attains its maximum at $(s\cdot s(C))^+$ and $(s\cdot s(C))^-$.\\
Vice versa, suppose that $w$ attains its maximum at $(s\cdot s(C))^+$ and $(s\cdot s(C))^-$ for all $C\in\circuits$. Since $s\cdot s_C =\reoriented{A}{s}\cdot s_C$ we conclude that $(\reoriented{A}{s}\cdot s(\initialcircuit{w}{C}))^\pm\neq\emptyset$ for all $C\in\circuits$. Hence, $s\cdot s(\initialcircuit{w}{C}) = s(\initialcircuit{w}{\reoriented{A}{C}})\nsubseteq(+)^n$ for all $C\in\circuits$, i.e. $\left(\reorientedmatroid{A}{M}\right)_\flag$ is acyclic. \Cref{lemm:acyclic} implies that $M_\flag$ is $s$-acyclic.\\
For $1.\Rightarrow 3.$ suppose that $M_\flag$ is $s$-acyclic. Hence, $\left(\reorientedmatroid{A}{M}\right)_\flag$ is acyclic (\Cref{lemm:acyclic}). Thus, $\flag$ is a positive flag of flats of $\reorientedmatroid{A}{M}$ (\Cref{theo:positivebergmanfan}). Let $\flag=(F_1,\ldots,F_l)$ be the flag of flats and $\{v_1,\ldots,v_l\}\subset\cvector{\reorientedmatroid{A}{M}}$ the covectors such that $v_i^0=F_i$ for $1\leq i \leq l$. According to \Cref{rema:reorientation}, $\{s\cdot v_1,\ldots,s\cdot v_l\}\subset\cvector{M}$ is a set of covectors satisfying $(s\cdot v_i)^0=F_i$ for $1\leq i\leq l$. To see this note that $s$ is a pure sign vector, i.e. $s^0=\emptyset$. Thus, $(s\cdot v_i)_e=0$ if and only if $(v_i)_e=0$. Moreover, $v_i\subset(+)^n$, i.e. $v_i^-=\emptyset$ for $1\leq i\leq l$. Thus, $s_i=(s\cdot v_i)\subseteq s$ and, therefore, $\{s_1,\ldots,s_l\}=\{s\cdot v_1,\ldots,s\cdot v_l\}\subset\cvector{M}$ is a set of covectors such that $\flag$ is a $s$-flag.\\
Vice versa, let $\flag$ be a $s$-flag, i.e. $\flag=(F_1,\ldots,F_l)$ and there is a set $\{v_1,\ldots,v_l\}\subset\cvector{M}$ such that $F_i=v_i^0$ and $v_i\subseteq s$. Using \Cref{rema:reorientation} again, we conclude that $\{s\cdot v_1,\ldots,s\cdot v_l\}=\{s_1,\ldots,s_l\}\subset\cvector{\reorientedmatroid{A}{M}}$ is a set of covectors such that $s_i^0=(s\cdot v_i)^0 =v_i^0$ and $s_i\subset (+)^n$ for $1\leq i\leq l$. Hence, $\flag$ is a positive flag of $\reorientedmatroid{A}{M}$. Due to \Cref{theo:positivebergmanfan} this implies $\left(\reorientedmatroid{A}{M}\right)_\flag$ is acyclic. By \Cref{lemm:acyclic} this is equivalent to $M_\flag$ is $s$-acyclic.
\end{proof}
\begin{corollary}
Let $M$ be an oriented matroid and $\topes$ its set of topes (cf. \Cref{rema:topes}). Then:
\begin{equation}\notag
s\notin\topes\quad\Rightarrow\quad\signedbergmanfan{s}{M}=\emptyset.
\end{equation}
\end{corollary}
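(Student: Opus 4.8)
The plan is to prove the contrapositive of the nontrivial content: assuming $\signedbergmanfan{s}{M}$ contains a weight vector $w$, I will show that the pure sign vector $s$ is forced to be a tope of $M$. By \Cref{defi:signedBergmanfan} the assumption $w\in\signedbergmanfan{s}{M}$ means precisely that the initial matroid $M_w$ is $s$-acyclic, so setting $\flag=\flag(w)$ puts the equivalences of \Cref{theo:signedbergmanfan} at my disposal.

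The first step is to apply the implication $1\Rightarrow 2$ of \Cref{theo:signedbergmanfan} to this $w$: for every signed circuit $C\in\circuits$ the weight $w$ attains its maximum over $\underline{C}$ both on $(s\cdot s_C)^+$ and on $(s\cdot s_C)^-$. The only feature of this statement I need is that it forces both sign classes to be nonempty --- a maximum cannot be attained over the empty set --- so in particular $(s\cdot s_C)^-\neq\emptyset$, equivalently $s\cdot s_C\nsubseteq(+)^n$, for every $C\in\circuits$. By the reformulation of $s$-acyclicity in sign-vector language recorded right after \Cref{defi:sacyclic}, this is exactly the statement that $M$ itself is $s$-acyclic.

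To conclude I would invoke \Cref{lemm:acyclic} with $A=s^-$: since $s$ is the pure sign vector determined by $s^-=A$ and $s^+=E\setminus A$, that lemma gives ``$M$ is $s$-acyclic $\iff$ $s$ is a tope of $M$''. Hence $s\in\topes$, contradicting the hypothesis, and therefore no such $w$ exists, i.e. $\signedbergmanfan{s}{M}=\emptyset$.

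I do not anticipate a genuine obstacle: the whole argument is a short unwinding of \Cref{theo:signedbergmanfan} together with \Cref{lemm:acyclic}. The one point requiring care is the reading of the second condition of \Cref{theo:signedbergmanfan} --- one must interpret ``$w$ attains its maximum at $(s\cdot s_C)^{\pm}$'' as entailing that both $(s\cdot s_C)^+$ and $(s\cdot s_C)^-$ are nonempty, which is precisely the link to $s$-acyclicity of $M$ that the proof rests on. One could in principle argue instead through the equivalence with condition $3$, extracting an $s$-flag of flats from a nonempty fan, but the very notion of $s$-flat already presupposes that $s$ is a tope, so the circuit formulation is the cleaner route here.
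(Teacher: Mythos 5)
Your proof is correct, but it takes a different path through \Cref{theo:signedbergmanfan} than the paper does. The paper's proof is one line: from $w\in\signedbergmanfan{s}{M}$ it passes via the implication $1\Rightarrow 3$ to conclude that $\flag(w)$ is an $s$-flag, and then reads off $s\in\topes$ from the fact that \Cref{defi:sflatsflag} only defines $s$-flats and $s$-flags for topes $s$. You instead go through condition $2$: both $(s\cdot s_C)^+$ and $(s\cdot s_C)^-$ must be nonempty for every $C\in\circuits$, hence $s\cdot s_C\nsubseteq(+)^n$ for all $C$, so $M$ itself (not only $M_w$) is $s$-acyclic, and \Cref{lemm:acyclic} then identifies $s$ as a tope. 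Both arguments are valid, and your reading of condition $2$ as entailing nonemptiness of both sign classes is the intended one. What your route buys is exactly what you point out: it avoids leaning on a definition that presupposes its own conclusion, replacing it with the substantive equivalence of \Cref{lemm:acyclic}; the cost is that it is slightly longer and uses the reformulation of $s$-acyclicity in sign-vector terms. The paper's route is shorter but its final step ``hence $s\in\topes$'' is purely definitional bookkeeping. Either version is acceptable; yours is arguably the more self-contained of the two.
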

\begin{proof}
Assume $\signedbergmanfan{s}{M}\neq\emptyset$. Then, there is an element $w\in\signedbergmanfan{s}{M}$ such that $M_w$ is $s$-acyclic. By \Cref{theo:signedbergmanfan} we know that $\flag\left( w \right)$ is a $s$-flag of $M$. Hence, $s\in\topes$. 
\end{proof}

From \Cref{theo:signedbergmanfan} we can immediately deduce the following
\begin{corollary}
Let $M$ be an oriented matroid. Then, the signed Bergman fan of $M$ with respect to $s\in\pvector{}$ is the union of weight class defined by $s$-flags of flats of $M$:
\begin{equation}\notag
\signedbergmanfan{s}{M}=\bigcup_{\substack{\flag\triangleleft\umatroid{M}:\\\flag~\text{is an}~s\text{-flag}}}\sigma_\flag.
\end{equation}
\end{corollary}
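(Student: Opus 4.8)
The plan is to read the statement off directly from \Cref{theo:signedbergmanfan}, using only that the oriented initial matroid $M_w$ depends on $w$ solely through the flag $\flag(w)$ it determines. First I would recall that, by \Cref{defi:signedBergmanfan}, a point $w\in\R^n$ belongs to $\signedbergmanfan{s}{M}$ exactly when $M_w$ is $s$-acyclic. Since $M_w=M_{\flag(w)}$ depends on $w$ only through its weight class, the set $\signedbergmanfan{s}{M}$ is automatically a union of weight classes: if $w\in\signedbergmanfan{s}{M}$ and $\flag(v)=\flag(w)$, then $M_v=M_w$ is $s$-acyclic, so $v\in\signedbergmanfan{s}{M}$ as well.

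Next I would invoke the equivalence $1.\Leftrightarrow 3.$ of \Cref{theo:signedbergmanfan}: with $\flag=\flag(w)$, the matroid $M_\flag$ is $s$-acyclic if and only if $\flag$ is an $s$-flag of flats of $M$. Because the flats of $M$ coincide with the flats of $\umatroid{M}$ (\Cref{rema:bigfacelattice}), this is the same as saying that $\flag$ is an $s$-flag of $\umatroid{M}$, which is exactly the index set appearing in the claimed union. Combining the two observations, $w\in\signedbergmanfan{s}{M}$ if and only if $\flag(w)$ is an $s$-flag, i.e.\ if and only if $w$ lies in the weight class $\sigma_\flag$ for some $s$-flag $\flag\triangleleft\umatroid{M}$.

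Finally I would note that the weight classes partition $\R^n$ --- each $w$ determines a unique flag $\flag(w)$ and hence lies in exactly one $\sigma_\flag$ --- so the collection of all $w$ whose flag is an $s$-flag is precisely the (disjoint) union $\bigcup_{\flag}\sigma_\flag$ over $s$-flags $\flag\triangleleft\umatroid{M}$, which is the asserted identity. There is no real obstacle here; the only point requiring a word of care is to phrase the index set in terms of genuine flags of flats (so that each $\sigma_\flag$ is nonempty and $M_\flag$ is well defined), and to observe that when $s\notin\topes$ there are no $s$-flags at all, so that both sides are empty, consistently with the preceding corollary.
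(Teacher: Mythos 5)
Your argument is correct and coincides with the paper's, which states this corollary as an immediate consequence of \Cref{theo:signedbergmanfan} without further proof: one observes that $\signedbergmanfan{s}{M}$ is a union of weight classes because $M_w$ depends only on $\flag(w)$, and then applies the equivalence $1.\Leftrightarrow 3.$ to identify exactly which weight classes occur. Your added remarks on the partition into weight classes and the empty case $s\notin\topes$ are accurate and only make the ``immediate'' deduction explicit.
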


Let us turn back to real tropicalizations of real linear spaces over $\RealField$. The positive part of $\realtrop\left(\Var{\Ideal{I}}\right)$ is well-understood (\cite[Proposition 4.1]{ardilaklivanswilliams}). For $\Ideal{I}\subset\RealLauring{n}$ a linear ideal with constant coefficients and $M$ the associated oriented matroid we have $\realtrop\left(\Var{\Ideal{I}}\right)\cap\left( (+)^n\times\R^n \right) = \signedbergmanfan{+}{M}$. We generalize this statement for arbitrary pure sign vectors $s\in\svector$:
\begin{theorem}\label{theo:realtropicallinearspace=signedbergmanfan}
Let $\Ideal{I}\subset\RealLauring{n}$ be a linear ideal with constant coefficients and $M$ the associated oriented matroid with signed circuits $\circuits$. Let $s\in\svector$ be a pure sign vector. Then:
\begin{equation}\notag
\realtrop\left(\Var{\Ideal{I}}\right)\cap\left( s\times\R^n \right) = \signedbergmanfan{s}{M}.
\end{equation}
\end{theorem}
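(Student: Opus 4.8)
The strategy is to reduce to the already-established positive case (Ardila–Klivans–Williams, \Cref{theo:positivebergmanfan}, together with \cite[Proposition 4.1]{ardilaklivanswilliams}) by reorienting the matroid, exactly mirroring the reorientation trick that proved \Cref{theo:signedbergmanfan}. Set $A=s^-$ and let $\reorientedmatroid{A}{M}$ be the reoriented oriented matroid, which is the oriented matroid associated to a reoriented linear ideal. Concretely, if $\Ideal{I}=\langle l_1,\ldots,l_{n-k}\rangle$ then substituting $x_j\mapsto -x_j$ for $j\in A$ carries $\Ideal{I}$ to a linear ideal $\Ideal{I}'$ still with constant coefficients whose associated oriented matroid is $\reorientedmatroid{A}{M}$; call this change of coordinates $\varphi$. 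On the level of tori $\varphi:\RealTorus{n}\to\RealTorus{n}$, $x_j\mapsto -x_j$ for $j\in A$, it restricts to a bijection $\Var{\Ideal{I}}\to\Var{\Ideal{I}'}$.

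The key step is to check that real tropicalization is equivariant under this sign change: $\realtrop\circ\varphi = (\reoriented{A}{s}\,\cdot\,) \circ \realtrop$, i.e. $\realtrop(\varphi(x))$ has the same modulus as $\realtrop(x)$ and its sign vector is obtained by flipping signs at the coordinates in $A$ — because $\val(-x_j)=\val(x_j)$ and $s(-x_j)=-s(x_j)$. Hence $\realtrop(\Var{\Ideal{I}'}) = (\reoriented{A}{s}\,\cdot\,)\bigl(\realtrop(\Var{\Ideal{I}})\bigr)$, and intersecting with the all-positive chart:
\begin{equation}\notag
\realtrop(\Var{\Ideal{I}})\cap\left( s\times\R^n \right) \;\xrightarrow{\ \reoriented{A}{s}\,\cdot\ }\; \realtrop(\Var{\Ideal{I}'})\cap\left( (+)^n\times\R^n \right) = \signedbergmanfan{+}{\reorientedmatroid{A}{M}},
\end{equation}
the last equality being \cite[Proposition 4.1]{ardilaklivanswilliams}. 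On the combinatorial side, \Cref{theo:signedbergmanfan} (equivalence of 1 and 3) together with \Cref{lemm:acyclic} and \Cref{rema:reorientation} gives $\signedbergmanfan{s}{M} = (\reoriented{A}{s}\,\cdot\,)\bigl(\signedbergmanfan{+}{\reorientedmatroid{A}{M}}\bigr)$: a weight $w$ lies in $\signedbergmanfan{s}{M}$ iff $M_w$ is $s$-acyclic iff $(\reorientedmatroid{A}{M})_w = \reorientedmatroid{A}{(M_w)}$ is acyclic iff $w\in\signedbergmanfan{+}{\reorientedmatroid{A}{M}}$ — note the weight $w$ itself is unchanged by reorientation, only the matroid moves, so the set of weights coincides as a subset of $\R^n$. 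Since the sign-flip map $\reoriented{A}{s}\,\cdot\,$ is a bijection on $\RealTropicalgroup^n$ that identifies the $s$-chart with the $(+)^n$-chart, both sides of the desired identity are the $\reoriented{A}{s}$-image of $\signedbergmanfan{+}{\reorientedmatroid{A}{M}}$, and the theorem follows.

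Alternatively, and perhaps more cleanly, one can argue directly without moving to a new ideal, using \Cref{theo:realtropicalbasis} and \Cref{theo:signedbergmanfan}: a point $(s,w)\in s\times\R^n$ lies in $\realtrop(\Var{\Ideal{I}}) = \bigcap_{C\in\circuits}\RealTh{l_C}$ iff for every signed circuit $C$, the tropical linear form $l_C$ has its modulus-maximum attained at least twice with opposite induced signs at $(s,w)$; unwinding \Cref{defi:realtropicalhypersurface} for the monomial $w_j$ (degree one, so the induced sign at coordinate $j$ is just $s_j$), this says $w$ attains its maximum over $\underline{C}$ at some index in $C^+$ with $s$-sign $+$ and some index in $C^-$ with $s$-sign $-$, or vice versa — precisely the condition ``$w$ attains its maximum at $(s\cdot s_C)^+$ and $(s\cdot s_C)^-$'' of \Cref{theo:signedbergmanfan}(2). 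That condition holds for all $C$ iff $M_{\flag(w)}$ is $s$-acyclic, i.e. iff $w\in\signedbergmanfan{s}{M}$. The main obstacle in either route is bookkeeping the sign conventions: verifying carefully that the ``opposite signs'' condition of \Cref{defi:realtropicalhypersurface}, once specialized to linear forms, matches the max-attained-at-$(s\cdot s_C)^\pm$ condition of \Cref{theo:signedbergmanfan}, and that restricting the closure in \Cref{defi:realtropicalizedvariety} to a single chart does not introduce spurious boundary points beyond those already accounted for by the finitely many tropical hypersurfaces $\RealTh{l_C}$.
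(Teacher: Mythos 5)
Your proposal is correct, and in fact your ``alternative'' second route is precisely the argument the paper gives: invoke \Cref{theo:realtropicalbasis} to replace $\realtrop\left(\Var{\Ideal{I}}\right)$ by $\bigcap_{C\in\circuits}\RealTh{l_C}$, fix a point of the $s$-chart, observe that for a linear form the induced sign at coordinate $j$ is just $s(p_j)s(w_j)$, and recognize the resulting condition as statement 2 of \Cref{theo:signedbergmanfan}. Your primary route --- substituting $x_j\mapsto -x_j$ for $j\in s^-$ to reduce to the all-positive chart of the reoriented ideal and then citing \cite[Proposition 4.1]{ardilaklivanswilliams} --- is a genuinely different reduction that the paper does not take at the level of this theorem (it performs the analogous reorientation only inside the purely combinatorial \Cref{theo:signedbergmanfan}). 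Your route buys a cleaner black-box reduction to the known positive case but requires the extra (easy but necessary) verifications you list: that the coordinate sign-flip is an automorphism carrying $\Ideal{I}$ to a constant-coefficient linear ideal whose matroid is $\reorientedmatroid{A}{M}$, and that $\realtrop$ is equivariant under it. The paper's direct route avoids introducing the auxiliary ideal entirely because \Cref{theo:signedbergmanfan} has already packaged the reorientation work into its statement 2. Your closing worry about the closure in \Cref{defi:realtropicalizedvariety} is harmless: each chart $s\times\R^n$ is open and closed in $\RealTropicalgroup^n$, so intersecting with a chart commutes with taking closures, and the paper likewise passes over this point silently.
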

\begin{proof}
The circuits $\circuits$ form a tropical basis of $\realtrop\left(\Var{\Ideal{I}}\right)$ (\Cref{theo:realtropicalbasis}). Therefore, we show
\begin{equation}\notag
\left(\bigcap_{C\in\circuits} \RealTh{l_C}\right)\cap\left( s\times\R^n \right) = \signedbergmanfan{s}{M}
\end{equation}
for arbitrary pure sign vectors $s\in\svector$. Recall that $l_C$ is the real tropical linear form obtained uniquely from $C$ (cf. \Cref{rema:orientedmatroidassociatedtoideal}). We write it in its simplest form, i.e. $l_C=\sum_{j\in\underline{C}} p_j w_j\in\RealTropaffring{n}$ with $p_j\in\{0^\pm\}$ for all $j\in\underline{C}$. Furthermore, recall that we denoted the sign vector obtained from $C$ by $s_C$. Here we get $(s_C)_j = s(p_j)\in\{\pm\}$ for all $j\in\underline{C}$ and $(s_C)=0$ for all $j\notin\underline{C}$. Now, suppose $w\in \left(\bigcap_{C\in\circuits} \RealTh{l_C}\right)\cap\left( s\times\R^n \right)$. The sign vector of $w$ is $s$, in detail $s(w_i)=s_i$ for all $i$. As a consequence we identify the modulus of $w$ with $w$. Since $w\in\bigcap_{C\in\circuits} \RealTh{l_C}$ it follows that, by definition, for all circuits $C\in\circuits$ exist $i,j\in\underline{C}$ such that $s(p_i)s(w_i) \neq s(p_j)s(w_j)$ and $w_i = w_j \geq w_k\forall k$. Equivalently for all circuits $C\in\circuits$ exist $i,j\in\underline{C}$ such that (w.l.o.g.) $i\in\left( s\cdot s_C \right)^-$, $j\in\left( s\cdot s_C \right)^+$ and $w_i=w_j\geq w_k\forall k$. This is precisely statement 2 of \Cref{theo:signedbergmanfan}, i.e. $M_w$ is $s$-acyclic.
\end{proof}	

\section{Singular Real Plane Tropical Curves}\label{sec:realsingularplanetropicalcurves}

This section deals with the classification of singular real plane tropical curves.

\begin{notation}\label{nota:sec3}
Let $\Delta\subset\Z^n$ be a convex lattice polytope and $\support=\Delta\cap\Z^n=\{\alpha_1,\ldots,\alpha_m\}$ its set of lattice points. By $\psi_\support:\RealTorus{n}\rightarrow\RealTorus{m}$ we denote the monomial map according to $\support$. For a set $B$ we denote by $p_B:\R^n\rightarrow\R_B$ the coordinate projection onto the coordinates indexed by $B$. Consider a generic real Laurent polynomial $F=\sum_i y_i x^{\alpha_i}\in\RealGenericringnvars{}$ that is linear in the coefficients. We write $R=\Realcoefficientring{m}$ for the polynomial ring forming the coefficients. We denote the Laurent polynomial obtained from $F$ with fixed coefficients $a\in\RealTorus{m}$ by $F_a=\sum_i a_i x^{\alpha_i}\in\RealLauring{n}$ and $F(p)=\sum_i y_i p^{\alpha_i}\in\Realcoefficientring{m}$ denotes the polynomial obtained from $F$ by evaluating at $p\in\RealTorus{n}$. In the following we write
\begin{equation}
\Ideal{I}=\langle F\left( \ones{n} \right),~\frac{\partial F}{\partial x_{1}}(\ones{n}),\ldots,\frac{\partial F}{\partial x_{n}}(\ones{1})\rangle\subset R\label{eq:realidealsingsatone}
\end{equation}
for the ideal generated by $F$ and its partial derivatives $\frac{\partial F}{\partial x_{i}}$ evaluated at $\ones{n}$. Let $A\in\Z^{n\times m}$ be the matrix representation of the point configuration $\support$. Let $A'\in\Z^{n+1\times m}$ be the matrix containing the coefficients of the generators of $\Ideal{I}$: 
\begin{equation}
A'=	\begin{bmatrix} 1 		& \cdots	& 1 \\ 
			\alpha_1 	& \cdots	& \alpha_m
	\end{bmatrix}=\begin{bmatrix}
		      \ones{m}^\top\\
		      A
		      \end{bmatrix}\in\Z^{3\times m}.\label{eq:realshiftedsupportmatrix}
\end{equation}
The columns of $A'$ correspond to the shift of the points of $\support$ into $\R^3$.
\end{notation}

We study the family of real Laurent polynomials that provide a singular real plane curves with a singularity fixed in $\ones{2}$:
\begin{equation}
\Realsingsat{\ones{2}}=\left\{a\in\Proj\left(\RealTorus{m}\right):\Var{F_a}~\text{is singular at}~\ones{2}\right\}=\Var{\Ideal{I}}=\ker(A').\label{eq:realsings}
\end{equation}

\subsection{Tropicalizations of Real Hypersurfaces with a Singularity in a Real Point}\label{subsec:realtropicaldiscriminant}

By definition, for any $a\in\Realsingsat{\ones{2}}$ we know that $\Var{F_a}$ is singular at $\ones{2}$. Let $\ones{2}\neq p\in\RealTorus{n}$ be any other torus point. It is not hard to see that
\begin{equation}\notag
F_a=\sum_{\alpha\in\support}a_\alpha x^\alpha~\text{is singular at}~p\quad\Leftrightarrow\quad F_{a\cdot\psi_\support(p)}=\sum_{\alpha\in\support}a_\alpha p^\alpha x^\alpha~\text{is singular at}~\ones{2}.
\end{equation}
In the complex case, it is well-known that $\complextrop(\psi_\support)(w)=A^\top w$. For an element $p\in\ComplexTorus{n}$ and $\complextrop(p)=-\val(p)=q$ we have $\complextrop(\psi_\support(p))=(\complextrop(p^\alpha))_{\alpha\in\support}=(\sk{q}{\alpha})_{\alpha\in\support}=A^\top q$. In the real case, we have to take signs into account, i.e. for $p\in\RealTorus{2}$ and $q=\realtrop(p)=(s(p),-\val(p))$ we have $\realtrop(\psi_\support(p))=(s(p^{\alpha}),-\val(p^\alpha))_{\alpha\in\support}=(s(p_x)^{\alpha_x}s(p_y)^{\alpha_y},\sk{|q|}{\alpha})_{\alpha\in\support}$. Thus, the real tropicalization of $a\cdot\psi_\support(p)$ where $\realtrop(a)=(s(a_\alpha),-\val(a_\alpha))_{\alpha\in\support}=(s_\alpha,b_\alpha)_{\alpha\in\support}\in\RealTropicalgroup^m$ is \[\realtrop(a\cdot\psi_\support(p)) =\left( s_\alpha s(p_x)^{\alpha_x}s(p_y)^{\alpha_y},b_\alpha+\sk{(-\val(p))}{\alpha} \right)_{\alpha\in\support}\in\RealTropicalgroup^m.\] We see that $\realtrop(a\cdot\psi_\support(p))=\realtrop(a)\odot_{\R}\realtrop(\psi_\support(p))$, i.e. the modulus of $\realtrop(a)$ is shifted by an element in the row space of $A$ (as in the complex case) and we perform a sign vector multiplication on the signs of $\realtrop(a)$ that are defined by $\psi_\support$.
\begin{remark}\label{rema:realtropicaldiscriminant}
The paragraph before explains that the product \[\realtrop(\Realsingsat{\ones{n}})\odot_{\R}\realtrop(\Image(\psi_\support))\] parametrizes real tropicalizations of real plane hypersurfaces with a singularity in a real torus point. If we restrict the monomial map $\psi_{\support}$ to $\{\pm 1\}^{n}\cong\{\pm\}^{n}$ and define $G=\{\psi_\support(v):v\in\{\pm\}^n\}\subset\pvector{m}$ then we have $\realtrop(\Image(\psi_\support))=G\times\rowspace(A)$.
\end{remark}

\begin{definition}[Lineality group]\label{defi:linealitygroup}
We call $G\times\rowspace(A)$ \textit{lineality group}.
\end{definition}

\subsection{Real Plane Tropical Curves and the Signed Secondary Fan}\label{subsec:realplanetropicalcurvesandsignedsecondaryfan}

Singular plane tropical curves over $\ComplexField$ were studied and characterized in \cite{markwigmarkwigshustin}. We adapt their methods for the real case and point out the differences owed to the signs. In this section, we introduce the signed secondary fan and examine its relationship to real plane tropical curves. To begin with, recall basics of real plane tropical curves. A real tropical Laurent polynomial $f=\bigoplus_{i} p_i w^{\alpha_i}\in\RealTroplauringtwovars$ with support $\support$ provides a piecewise affine linear function $|f|(w)=\max_i\{|p_i|+\sk{w}{\alpha_i}\}$ (cf. \Cref{defi:realtropicalpolynomial}) called \textit{modulus of $f$}. Basically, $|f|$ forgets about the signs. The tropical curve defined by $|f|$ is dual to the regular marked subdivision of $\Delta=\Newt{f}$ (explained below) with respect to the coefficient moduli of $f$. With regard to $f$, this subdivisions comes with signs.

\begin{definition}[Signed marked subdivision]\label{defi:signedmarkedsubdivision}
A \textit{signed marked polytope} $(P,Q,s_Q)$ consists of a marked polytope $(P,Q)$ and a sign vector $s_Q\in\{\pm\}^{|Q|}$ such that $\alpha\in Q$ has a sign $s_\alpha$. A \textit{signed marked subdivision} is a set of signed marked polytopes, $T=\{(P_i,Q_i,s_{Q_i}):i=1,\ldots,k\}$, satisfying
\begin{itemize}
\item the collection of marked polytopes $(P_i,Q_i)$ with $i\in\groundset{k}$ forms a marked subdivision, and
\item signs of marked polytopes are compatible, i.e. $p_{Q_i\cap Q_j}(s_{Q_i})=p_{Q_i\cap Q_j}(s_{Q_j})$ for all $i,j\in\groundset{k}$.
\end{itemize}
If we forget about signs we obtain a marked subdivision denoted by $|T|$. As for marked subdivisions, we call the collection of $P_i$ without markings and signs the \textit{type} of $T$. The boundary of $T$ is $\partial |T|$.
\end{definition}

Let $w\in\RA{}$ be a vector. It defines heights on the points $\support$. The projection of the upper facets of the convex hull \[\conv{(\alpha_{i},w_{i}):i\in\groundset{m}}\] defines a subdivision of $\Delta$ called \textit{regular marked subdivision}. A point $\alpha_{i}$ is \textit{marked} if $(\alpha_{i},w_{i})$ is contained in an upper facet. By $\secondaryfan{\support}$ we denote the complete fan supported on $\RA$ whose cones provide equivalence classes of regular marked subdivisions of $\Delta$. If $w\in\RA{}$ provides the subdivision $T$, then we denote the cone $w$ is in by $\sigma(w)=\sigma_{T}$. An element $(s,u)\in\signedsecondaryfan{\support}$ provides a signed regular marked subdivision $T$: we get a regular marked subdivision $|T|=\{(P_i,Q_i):i=1,\ldots,k\}$ of $\Delta$ by the modulus $u\in\secondaryfan{\support}$ and equip the vertices with the signs according to $s$, i.e. $\alpha\in Q_i$ gets the sign $s_\alpha$.

\begin{remark}[Klein group]\label{rema:kleingroup}
Note that we have$\{\pm\}^2=\{(+,+),(+,-),(-,+),(-,-)\}$. This 4-element set forms the \textit{Klein group} $\Kleingroup$. In general, the set of pure sign vectors $\pvector{m}$ forms a multiplicative group. If we identify $\{\pm\}\cong\{1,-1\}$ we can think of $\psi_\support:\pvector{2}\rightarrow\pvector{m}$ as a group homomorphism where $\psi_\support$ denotes the monomial map according to $\support$ (cf. \Cref{nota:sec3}). Let \[G=\{\psi_\support(v):v\in\{\pm\}^2\}\subset\pvector{m}\] denote the image of $\Kleingroup$. We have $G\cong\Kleingroup$ if and only if the cardinality of $G$ is 4. In the following we refer to $\pvector{2}$ as the Klein group. We can associate sublattices of $\Z^2$ to elements of the Klein group: to $v\in\Kleingroup$ we associate the sublattice $\{p\in\Z^2:v^p=+\}$ (cf. \Cref{fig:Kleingrouplattices}).
\end{remark}

\begin{figure}
\subcaptionbox{$\{(a,b):a\equiv 0\mod 2\}$.}[0.3\textwidth]{
\begin{tikzpicture}
\def\shift{0.7}
\foreach \x in {1,...,5}
{
  \foreach \y in {1,...,5}
  {
    \pgfmathparse{int(Mod(\x,2))}
    \let\result\pgfmathresult
    \ifnum\result=0 
      \coordinate[shape=circle,inner sep=1 pt,draw,label={[]}] (P\x\y) at (\x*\shift,\y*\shift);
    \else
      \coordinate[shape=circle,inner sep=1 pt,fill,label={[]}] (P\x\y) at (\x*\shift,\y*\shift);
    \fi
  }
}
\end{tikzpicture}
}\hfill
\subcaptionbox{$\{(a,b):b\equiv 0\mod 2\}$.}[0.3\textwidth]{
\begin{tikzpicture}
\def\shift{0.7}
\foreach \x in {1,...,5}
{
  \foreach \y in {1,...,5}
  {
    \pgfmathparse{int(Mod(\y,2))}
    \let\result\pgfmathresult
    \ifnum\result=0 
      \coordinate[shape=circle,inner sep=1 pt,draw,label={[]}] (P\x\y) at (\x*\shift,\y*\shift);
    \else
      \coordinate[shape=circle,inner sep=1 pt,fill,label={[]}] (P\x\y) at (\x*\shift,\y*\shift);
    \fi
  }
}
\end{tikzpicture}
}\hfill
\subcaptionbox{$\{(a,b):a+b\equiv 0\mod 2\}$.}[0.3\textwidth]{
\begin{tikzpicture}
\def\shift{0.7}
\foreach \x in {1,...,5}
{
  \foreach \y in {1,...,5}
  {
    \pgfmathparse{int(Mod(\x+\y,2))}
    \let\result\pgfmathresult
    \ifnum\result=0 
      \coordinate[shape=circle,inner sep=1 pt,fill,label={[]}] (P\x\y) at (\x*\shift,\y*\shift);
    \else
      \coordinate[shape=circle,inner sep=1 pt,draw,label={[]}] (P\x\y) at (\x*\shift,\y*\shift);
    \fi
  }
}
\end{tikzpicture}
}
\caption{Sublattices of $\Z^2$ corresponding to elements of the Klein group $\Kleingroup$.\label{fig:Kleingrouplattices}}
\end{figure}

\begin{remark}[Signed regular marked subdivisions and the Klein group]\label{rema:equivalentsignedregularmarkedsubdivisions}
Consider the signed regular marked subdivision $T$ obtained from $(s,u)\in\signedsecondaryfan{\support}$. Let $v\in\pvector{2}$ be arbitrary. Then, we denote the signed regular marked subdivision defined by the element $(s\cdot\psi_\support(v),u)\in\signedsecondaryfan{\support}$ by $T_v$. Note that $T_v$ consists of the identical marked polytopes as $T$, i.e. $|T_v|=|T|$. The signs of $T_v$ equal the signs of $T$ multiplied with $\psi_\support(v)$. In particular, we have $T=T_{(+,+)}$. Note that we get $T_v$ from $T$ by changing signs at all points contained in the sublattice corresponding to $v$ (cf. \Cref{rema:kleingroup} and \Cref{fig:realtropicalhypersurface}). We define an equivalence relation on $\pvector{m}$: two elements $s,s'\in\pvector{m}$ are called \textit{sign equivalent} or (\textit{$G$-equivalent}) if and only if there exists an element $v\in\Kleingroup$ such that $s=\psi_\support(v)\cdot s'$. Equivalently, we call $s,s'\in\pvector{m}$ sign equivalent if and only if $s=s'\in\pvector{m}\big/G$.

\end{remark}

\begin{definition}[Charts of real plane tropical curves]\label{defi:charts}
Let $f\in\RealTroplauring{n}$ be a real tropical Laurent polynomial and $C_f=\RealTh{f}\subset\RealTropicalgroup^n$ its associated real tropical hypersurface. A point $w\in C_f$ has $n$ signs, $s(w)=v\in\pvector{n}$, according to its $n$ coordinates. By $C_{f,v}$ (or $(C_{f})_{v}$) we denote the part of $C_f$ containing the solutions $w\in C_f$ with $s(w)=v$ and call $C_{f,v}$ \textit{$v$-chart} of $C_f$. In particular, we have $C_{f,v}\subset v\times\R^{n}$ (cf. \Cref{defi:chartsambientspace}).
\end{definition}


\begin{remark}[Duality]\label{rema:duality}
An element $(s,u)\in\signedsecondaryfan{\support}$ provides a real tropical Laurent polynomial $f=\bigoplus_{\alpha\in\support} (s_\alpha,u_\alpha)w^\alpha\in\RealTroplauringtwovars$ and, therefore, a real plane tropical curve $C_f$. Moreover, $(s,u)$ induces a signed regular marked subdivision $T$ of $\Delta$. In the following we explain the duality of the curve $C_f$ and the signed regular marked subdivisions $T_{v}$ arising from $(s,u)\in\signedsecondaryfan{\support}$: first, note that $|T|$ is the regular marked subdivision of $\Delta$ according to the modulus $u$, i.e. $\Th{|f|}$ is dual to $|T|$ in the usual sense. In particular, vertices of $\Th{|f|}$ correspond to marked polygons and each edge $e$ of $\Th{|f|}$ corresponds to an edge $E$ of a polygon. Dual edges are perpendicular. An edge $e$ of $\Th{|f|}$ is unbounded if and only if the dual edge $E$ is contained in the boundary of $\Delta$.\\
Now, consider $v\times\R^2$ indexed by $v\in\{\pm\}^2$. Then, $T_v$ is the signed regular marked subdivision obtained from $T$ by adjusting signs with $\psi_\support$ (cf. \Cref{rema:equivalentsignedregularmarkedsubdivisions}): if $(P_i,Q_i,s_{Q_i})$ is a signed marked polygon of $T$ then $(P_i,Q_i,s_{Q_i}\cdot\psi_{Q_i}(v))$ is a signed marked polygon of $T_v$ where $(s_{Q_i}\psi_{Q_i}(v))_\alpha = (s_{Q_i})_\alpha v^\alpha$ for each $\alpha\in Q_i$. Hence, $C_{f,v}$ is dual to $T_v$ with respect to signs. In detail, $C_{f,v}$ contains a vertex dual to a  signed marked polygon $(P_i,Q_i,s_{Q_i}\cdot\psi_{Q_i}(v))$ if and only if there are two elements $\alpha,\alpha'\in Q_i$ such that $(s_{Q_i}\cdot\psi_{Q_i}(v))_\alpha\neq (s_{Q_i}\cdot\psi_{Q_i}(v))_{\alpha'}$. Note that this is precisely the sign condition of \Cref{defi:realtropicalhypersurface}. There is a (bounded or unbounded) edge $e$ in $C_{f,v}$ if and only if there is a dual edge $E$ contained in a signed marked polytope $(P_i,Q_i,s_{Q_i}\cdot\psi_{Q_i}(v))$ such that there are two elements $\alpha,\alpha'\in E\cap Q_i$ such that $(s_{Q_i}\cdot\psi_{Q_i}(v))_\alpha\neq (s_{Q_i}\cdot\psi_{Q_i}(v))_{\alpha'}$. Thus, $C_{f,v}$ contains those polyhedra of $\Th{|f|}$ that are dual to signed marked polytopes of $T_v$ with at least two vertices having different signs.
\end{remark}

\begin{example}
Consider $\support=\{x\in\Z^{2}:\sk{x}{\ones{2}}\leq 2,x\geq 0\}=\{\alpha_1,\ldots,\alpha_6\}$ and the element \[(s,u)=((+,-,+,-,+,+),(-1,0,0,-1,0,0))\in\signedsecondaryfan{\support}.\] The point $(s,u)$ defines the real tropical Laurent polynomial $f=-1^+\oplus0^-x\oplus0^+y\oplus-1^-x^2\oplus0^+xy\oplus0^+y^2$. The signed regular marked subdivisions $T_{v}$ of $\Delta$ induced by $(s,u)$ and $v\in\pvector{2}$ are shown in \Cref{fig:signedregularmarkedsubdivisions}. Note that $T=T_{(+,+)}$.
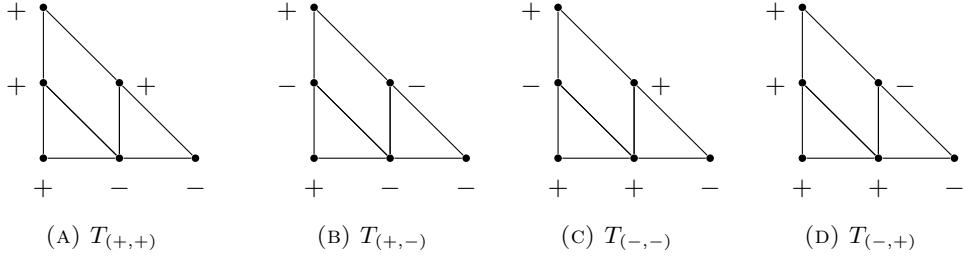
\begin{figure}
\subcaptionbox{$T_{(+,+)}$}{
\begin{tikzpicture}
\def\x{2}
\def\y{2}
\def\shift{20}
\coordinate[shape=circle,inner sep=1pt,fill,label={[yshift=-\shift pt]$+$}] (P1) at (\x,\y);
\coordinate[shape=circle,inner sep=1pt,fill,label={[yshift=-\shift pt]$-$}] (P2) at (\x+1,\y);
\coordinate[shape=circle,inner sep=1pt,fill,label={[xshift=-0.5*\shift pt,yshift=-0.5*\shift pt]$+$}] (P3) at (\x,\y+1);
\coordinate[shape=circle,inner sep=1pt,fill,label={[yshift=-\shift pt]$-$}] (P4) at (\x+2,\y);
\coordinate[shape=circle,inner sep=1pt,fill,label={[xshift=0.5*\shift pt,yshift=-0.5*\shift pt]$+$}] (P5) at (\x+1,\y+1);
\coordinate[shape=circle,inner sep=1pt,fill,label={[xshift=-0.5*\shift pt,yshift=-0.5*\shift pt]$+$}] (P6) at (\x,\y+2);
\path[draw] (P1)--(P2)--(P3)--(P1);
\path[draw] (P3)--(P6)--(P5)--(P2)--(P3);
\path[draw] (P2)--(P5)--(P4)--(P2);
\end{tikzpicture}
}\hspace{10pt}
\subcaptionbox{$T_{(+,-)}$}{
\begin{tikzpicture}
\def\x{2}
\def\y{2}
\def\shift{20}
\coordinate[shape=circle,inner sep=1pt,fill,label={[yshift=-\shift pt]$+$}] (P1) at (\x,\y);
\coordinate[shape=circle,inner sep=1pt,fill,label={[yshift=-\shift pt]$-$}] (P2) at (\x+1,\y);
\coordinate[shape=circle,inner sep=1pt,fill,label={[xshift=-0.5*\shift pt,yshift=-0.5*\shift pt]$-$}] (P3) at (\x,\y+1);
\coordinate[shape=circle,inner sep=1pt,fill,label={[yshift=-\shift pt]$-$}] (P4) at (\x+2,\y);
\coordinate[shape=circle,inner sep=1pt,fill,label={[xshift=0.5*\shift pt,yshift=-0.5*\shift pt]$-$}] (P5) at (\x+1,\y+1);
\coordinate[shape=circle,inner sep=1pt,fill,label={[xshift=-0.5*\shift pt,yshift=-0.5*\shift pt]$+$}] (P6) at (\x,\y+2);
\path[draw] (P1)--(P2)--(P3)--(P1);
\path[draw] (P3)--(P6)--(P5)--(P2)--(P3);
\path[draw] (P2)--(P5)--(P4)--(P2);
\end{tikzpicture}
}
\subcaptionbox{$T_{(-,-)}$}{
\begin{tikzpicture}
\def\x{2}
\def\y{2}
\def\shift{20}
\coordinate[shape=circle,inner sep=1pt,fill,label={[yshift=-\shift pt]$+$}] (P1) at (\x,\y);
\coordinate[shape=circle,inner sep=1pt,fill,label={[yshift=-\shift pt]$+$}] (P2) at (\x+1,\y);
\coordinate[shape=circle,inner sep=1pt,fill,label={[xshift=-0.5*\shift pt,yshift=-0.5*\shift pt]$-$}] (P3) at (\x,\y+1);
\coordinate[shape=circle,inner sep=1pt,fill,label={[yshift=-\shift pt]$-$}] (P4) at (\x+2,\y);
\coordinate[shape=circle,inner sep=1pt,fill,label={[xshift=0.5*\shift pt,yshift=-0.5*\shift pt]$+$}] (P5) at (\x+1,\y+1);
\coordinate[shape=circle,inner sep=1pt,fill,label={[xshift=-0.5*\shift pt,yshift=-0.5*\shift pt]$+$}] (P6) at (\x,\y+2);
\path[draw] (P1)--(P2)--(P3)--(P1);
\path[draw] (P3)--(P6)--(P5)--(P2)--(P3);
\path[draw] (P2)--(P5)--(P4)--(P2);
\end{tikzpicture}
}
\subcaptionbox{$T_{(-,+)}$}{
\begin{tikzpicture}
\def\x{2}
\def\y{2}
\def\shift{20}
\coordinate[shape=circle,inner sep=1pt,fill,label={[yshift=-\shift pt]$+$}] (P1) at (\x,\y);
\coordinate[shape=circle,inner sep=1pt,fill,label={[yshift=-\shift pt]$+$}] (P2) at (\x+1,\y);
\coordinate[shape=circle,inner sep=1pt,fill,label={[xshift=-0.5*\shift pt,yshift=-0.5*\shift pt]$+$}] (P3) at (\x,\y+1);
\coordinate[shape=circle,inner sep=1pt,fill,label={[yshift=-\shift pt]$-$}] (P4) at (\x+2,\y);
\coordinate[shape=circle,inner sep=1pt,fill,label={[xshift=0.5*\shift pt,yshift=-0.5*\shift pt]$-$}] (P5) at (\x+1,\y+1);
\coordinate[shape=circle,inner sep=1pt,fill,label={[xshift=-0.5*\shift pt,yshift=-0.5*\shift pt]$+$}] (P6) at (\x,\y+2);
\path[draw] (P1)--(P2)--(P3)--(P1);
\path[draw] (P3)--(P6)--(P5)--(P2)--(P3);
\path[draw] (P2)--(P5)--(P4)--(P2);
\end{tikzpicture}
}
\caption{Signed regular marked subdivisions $T_v$ with $v\in\{\pm\}^2$.}
\label{fig:signedregularmarkedsubdivisions}
\end{figure}
The real tropical hypersurface $C_f\subset\RealTropicalgroup$ is shown in \Cref{fig:realtropicalhypersurface}. It is worth mentioning that charts of $\RealTh{f}$ are not necessarily connected or balanced.
\begin{figure}
\subcaptionbox{$C_{f,(+,+)}$}{
\begin{tikzpicture}
\def\k{0.75}
\def\l{0.75}
\def\shift{20}
\coordinate[shape=circle,inner sep=1pt,fill,label={[xshift=-\shift pt,yshift=-3 pt]$(-1,-1)$}] (v1) at (-\k,-\k);
\coordinate[shape=circle,inner sep=1pt,fill,label={[xshift=-3 pt]$(0,0)$}] (v2) at (0,0);
\coordinate[shape=circle,inner sep=1pt,fill,label={[yshift=-\shift pt]$(1,0)$}] (v3) at (\k,0);
\path[draw] (v1)--(-\k,-\k-\l);
\path[draw] (v1)--(v2);
\path[draw] (v2)--(v3);
\path[draw] (v3)--(\k+\l,\l);
\end{tikzpicture}
}\hspace{10pt}
\subcaptionbox{$C_{f,(+,-)}$}{
\begin{tikzpicture}
\def\k{0.75}
\def\l{0.75}
\def\shift{20}
\coordinate[shape=circle,inner sep=1pt,fill,label={[xshift=\shift+3 pt,yshift=-8 pt]$(-1,-1)$}] (v1) at (-\k,-\k);
\coordinate[shape=circle,inner sep=1pt,fill,label={[xshift=-6 pt]$(0,0)$}] (v2) at (0,0);
\path[draw] (v1)--(-\k-\l,-\k);
\path[draw] (v1)--(-\k,-\k-\l);
\path[draw] (v2)--(-\k-\l,0);
\path[draw] (v2)--(\l,\l);
\end{tikzpicture}
}
\subcaptionbox{$C_{f,(-,-)}$}{
\begin{tikzpicture}
\def\k{0.75}
\def\l{0.75}
\def\shift{20}
\coordinate[shape=circle,inner sep=1pt,fill,label={[yshift=-\shift pt]$(-1,-1)$}] (v1) at (-\k,-\k);
\coordinate[shape=circle,inner sep=1pt,fill,label={[]$(0,0)$}] (v2) at (0,0);
\coordinate[shape=circle,inner sep=1pt,fill,label={[yshift=-14 pt,xshift=14 pt]$(1,0)$}] (v3) at (\k,0);
\path[draw] (v1)--(-\k-\l,-\k);
\path[draw] (v1)--(v2);
\path[draw] (v2)--(-\k-\l,0);
\path[draw] (v3)--(\k,-\k-\l);
\path[draw] (v3)--(\k+\l,\l);
\end{tikzpicture}
}
\subcaptionbox{$C_{f,(-,+)}$}{
\begin{tikzpicture}
\def\k{0.75}
\def\l{0.75}
\def\shift{20}
\coordinate[shape=circle,inner sep=1pt,fill,label={[yshift=-\shift pt]$(0,0)$}] (v2) at (0,0);
\coordinate[shape=circle,inner sep=1pt,fill,label={[xshift=14 pt,yshift=-10 pt]$(1,0)$}] (v3) at (\k,0);
\path[draw] (v2)--(\l,\l);
\path[draw] (v2)--(v3);
\path[draw] (v3)--(\k,-\k-\l);
\end{tikzpicture}
}
\caption{Real plane tropical curve $\RealTh{f}$.}
\label{fig:realtropicalhypersurface}
\end{figure}
\end{example}

\begin{remark}[Duality in the real/complex case]\label{rema:dualitycomplexcomparedreal}
The duality in the complex case explained in \Cref{rema:duality} implies that we can deduce the type of the subdivision (cf. \Cref{defi:signedmarkedsubdivision}) of a given complex plane tropical curve. In the real case this is not true, e.g. there might be signed marked polytopes in the subdivision without any influence/evidence in the real plane tropical curve.
\end{remark}


\begin{remark}[Switching signs]\label{rema:switchingsigns}
As explained in \Cref{rema:equivalentsignedregularmarkedsubdivisions}, an element $(s,u)\in\signedsecondaryfan{\support}$ defines a polynomial $f=\bigoplus_{\alpha\in\support} (s_\alpha,u_\alpha)w^\alpha\in\RealTroplauringtwovars$ as well as a set of signed regular marked subdivisions $T_v$ with $v\in\pvector{2}$. Consider $(s',u)\in\signedsecondaryfan{\support}$ such that $s$ is sign equivalent to $s'$. It defines a real plane tropical curve $C_{f'}$ arising from the real tropical polynomial $f'=\bigoplus_{\alpha\in\support} (s_\alpha',u_\alpha)w^\alpha\in\RealTroplauringtwovars$. $C_f$ and $C_{f'}$ are related as follows: since $s'$ is sign equivalent to $s$ there is an element $v\in\pvector{2}$ such that $s'=\psi_\support(v)s$. Consequently, the signed regular marked subdivision $T'=T_{(+,+)}'$ defined by $(s',u)$ equals $T_v$, the signed regular marked subdivision obtained from $(s,u)$ with signs switched according to $v$. Since we index charts by elements of $\pvector{2}$ we conclude: the charts of $C_{f'}$ and $C_{f}$ are equal up to a permutation. This means that $T_{v'}'$ equals $T_{v\cdot v'}$ for all $v'\in\pvector{2}$.
\end{remark}

\begin{definition}[Signed secondary fan]\label{defi:signedsecondaryfan}
By abuse of notation, we write $s(w)$ for the sign vector of $w\in\signedsecondaryfan{\support}$ and $|w|$ for the modulus (cf. the notations defined for $\RealTropicalgroup$ in \Cref{subsec:realpuiseuxseries}). We define an equivalence relation on $\signedsecondaryfan{\support}$. Two elements $w,w'\in\signedsecondaryfan{\support}$ are called equivalent if and only if $s(w)$ is sign equivalent to $s(w')$ and $\sigma(|w|)=\sigma(|w'|)$. We call the set $\signedsecondaryfan{\support}$ with its decomposition into equivalence classes the \textit{signed secondary fan} of the point configuration $\support\subset\Z^2$.
\end{definition}

\begin{remark}[Equivalence classes of $\signedsecondaryfan{\support}$.]\label{rema:equivalenceclassessignedsecondaryfan}
Equivalence classes of regular marked subdivisions form cones $\sigma_T$ providing $\secondaryfan{\support}$ with a fan structure. As $\RealTropicalgroup$ has no reasonable addition (cf. \Cref{defi:realtropicalgroup}), the signed secondary fan $\signedsecondaryfan{\support}$ does not carry a fan structure. We denote the equivalence class of an element $(s,u)\in\signedsecondaryfan{\support}$ by $s\times\sigma_T$ where any $u\in\relint(\sigma_{T})$ provides the regular subdivision $T$. The representatives of $s\times\sigma_T$ form the set $G\cdot s \times \sigma_T$ where $G\cdot s$ denotes the orbit of $s$ under $G$. We define the dimension of $s\times \sigma_T$ by $\dim(\sigma_T)$.
\end{remark}

\begin{remark}[Lineality group]\label{rema:linealitygroup}
Consider the equivalence class $s\times\sigma_T\subset\signedsecondaryfan{\support}$. The secondary fan $\secondaryfan{\support}$ contains the lineality space $\rowspace(A')$, generated by $\ones{m}$ and the two vectors containing the $x$-coordinates and $y$-coordinates of the points in $\support$. Consequently, $s\times\sigma_T$ is invariant under $(+)^m\times\rowspace(A')$ since it does not change the signs or subdivision inappropriately. As the sign of an equivalence class is unique up to sign equivalence we see that $s\times\sigma_T$ is invariant under the lineality group $G\times\rowspace(A')$ (cf. \Cref{rema:equivalenceclassessignedsecondaryfan}).
\end{remark}

\subsection{Real Tropicalization of $\Realsingsat{\ones{2}}$}\label{subsec:realsingsatone}

In this section we study the real tropicalization of $\Realsingsat{\ones{2}}=\Var{\Ideal{I}}$ with $\Ideal{I}\subset\Realcoefficientring{m}$ as defined in \Cref{nota:sec3} (in particular \Cref{eq:realidealsingsatone}). According to \Cref{eq:realshiftedsupportmatrix} we have
\begin{equation}\notag
\Realsingsat{\ones{2}}=\Var{\Ideal{I}}=\ker(A')~\With{}~A'=\begin{bmatrix}
\ones{m}^\top\\
A
\end{bmatrix}\in\Z^{3\times m}.
\end{equation}
Here, $A\in\Z^{2\times m}$ denotes the matrix representation of $\support$ and $A'$ its shift into $\R^{3}$. Let $\idealmatroid{\Ideal{I}}$ denote the matroid associated to the linear ideal $\Ideal{I}$. In particular, it is the matroid associated to the row space of $A'$. Its dual matroid is the vector matroid $\vectormatroid{A'}$ associated to the kernel of $A'$. It encodes the affine dependencies among the points of $\support$. Let $\{1,2,3\}$ be a basis of $\vectormatroid{A'}$ and, therefore, $\{\alpha_1,\alpha_2,\alpha_3\}$ is an affine basis of the affine hyperplane $\affinespan{\support'}\subset\R^{3}$. Hence, we obtain an equivalent coefficient matrix of $\Ideal{I}$ of the form
\begin{equation}
A'=\begin{bmatrix}
  \mathbb{E}_3 & \bar{A}'\label{eq:galedual}
  \end{bmatrix}
\end{equation}
such that $\bar{A}'\in\Q^{3\times m-3}$. Moreover, $\left( \bar{A}' \right)_j =\begin{bmatrix} a_j & b_j & c_j \end{bmatrix}^\top$ such that
\begin{equation}
\alpha_j =a_j\alpha_1 + b_j\alpha_2 + c_j\alpha_3~\And{}~a_{j}+b_{j}+c_{j}=1.\label{eq:affinebasis}
\end{equation}
for all $j$. Hence, column $(\bar{A}')_j$ with $j\in\{4,\ldots,m\}$ of $\bar{A}'$ contains the coordinates of $\alpha_j$ with respect to the affine basis $\{\alpha_1,\alpha_2,\alpha_3\}$.

\begin{remark}[Affine dependencies]\label{rema:planarcircuits}
Recall that $\vectormatroid{A'}$ denotes the vector matroid arising from the linear dependencies among the columns of $A'$. As $A'$ is the matrix representation of $\support'=\{1\}\times\support$ it encodes the information about affine dependencies among the point configuration $\support$. The fixed point configuration $\support$ lives in the plane. In \Cref{fig:planarcircuits} we list all planar circuits that may occur and we refer to them as circuits of type (A), (B) and (C).
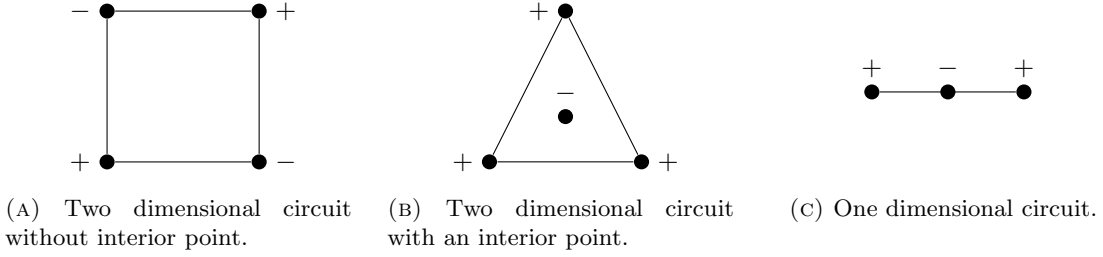
\begin{figure}[h]
\subcaptionbox{Two dimensional circuit without interior point.\label{fig:quadrangle}}[0.3\textwidth]{
\begin{tikzpicture}
\def\x{0}
\def\y{0}
\def\shift{1}
\coordinate[shape=circle,inner sep=2pt,fill,label={[xshift=10 pt,yshift=-10 pt]$-$}] (P1) at (\x+\shift,\y-\shift);
\coordinate[shape=circle,inner sep=2pt,fill,label={[xshift=10 pt,yshift=-10 pt]$+$}] (P2) at (\x+\shift,\y+\shift);
\coordinate[shape=circle,inner sep=2pt,fill,label={[xshift=-10 pt,yshift=-10 pt]$-$}] (P3) at (\x-\shift,\y+\shift);
\coordinate[shape=circle,inner sep=2pt,fill,label={[xshift=-10 pt,yshift=-10 pt]$+$}] (P4) at (\x-\shift,\y-\shift);
\path[draw,black] (P4) -- (P3) -- (P2) -- (P1) -- (P4);
\end{tikzpicture}
}\hspace{10pt}
\subcaptionbox{Two dimensional circuit with an interior point.\label{fig:triangle}}[0.3\textwidth]{
\begin{tikzpicture}
\def\x{0}
\def\y{0}
\def\shift{1}
\coordinate[shape=circle,inner sep=2pt,fill,label={[xshift=10 pt,yshift=-10 pt]$+$}] (P1) at (\x+\shift,\y);
\coordinate[shape=circle,inner sep=2pt,fill,label={[xshift=-10 pt,yshift=-10 pt]$+$}] (P2) at (\x-\shift,\y);
\coordinate[shape=circle,inner sep=2pt,fill,label={[yshift=-\shift pt]$-$}] (P3) at (\x,\y+0.6);
\coordinate[shape=circle,inner sep=2pt,fill,label={[xshift=-10 pt,yshift=-10 pt]$+$}] (P4) at (\x,\y+\shift+1);
\path[draw,black] (P4) -- (P1) -- (P2) -- (P4);
\end{tikzpicture}
}\hspace{10pt}
\subcaptionbox{One dimensional circuit.\label{fig:line}}[0.3\textwidth]{
\begin{tikzpicture}
\def\x{0}
\def\y{0}
\def\shift{1}
\coordinate[shape=circle] (P0) at (\x,-\shift);
\coordinate[shape=circle,inner sep=2pt,fill,label={[yshift=-\shift pt]$+$}] (P1) at (\x-\shift,\y);
\coordinate[shape=circle,inner sep=2pt,fill,label={[yshift=-\shift pt]$+$}] (P2) at (\x+\shift,\y);
\coordinate[shape=circle,inner sep=2pt,fill,label={[yshift=-\shift pt]$-$}] (P3) at (\x,\y);
\path[draw,black] (P1) -- (P2);
\end{tikzpicture}
}
\caption{Planar circuits together with signs that can be realized.}
\label{fig:planarcircuits}
\end{figure}
To obtain a top-dimensional circuit in $\R^{n}$, add a point to the $n$-dimensional standard simplex $\conv(0,e_{1},\ldots,e_{n})$ that is either properly contained in the simplex or outside the simplex. We can recover the signs of a signed circuit $C$ of $\vectormatroid{A'}$ from a single sign attached to a vertex of the unsigned circuit $\underline{C}$. If we equip a single arbitrary point of any of the sketched but unsigned circuits with a sign the remaining signs are uniquely determined by Radon's Theorem (see e.g. \cite[Chapter 2.1, Theorem 1.2]{gruber}). Hence, the sign distributions correspond uniquely to signed circuits of $\vectormatroid{A'}$.
\end{remark}

We conclude: $\Realsingsat{\ones{2}}=\Var{\Ideal{I}}=\ker(A')$ is a linear space. The oriented matroid $M$ associated to the linear ideal $\Ideal{I}$ describes the real tropicalization of $\ker(A')$ completely (cf. \Cref{theo:realtropicalbasis}), i.e. we have $\realtrop(\Var{\Ideal{I}})=\bigcap_{C\in\circuits} \RealTh{l_C}$ (cf. \Cref{rema:orientedmatroidassociatedtoideal} for notations). Moreover, \Cref{theo:realtropicallinearspace=signedbergmanfan} shows that the $s$-chart of $\realtrop(\Var{\Ideal{I}})$ equals $\signedbergmanfan{s}{M}$. In \Cref{sec:signedbergmanfans} we showed that
\begin{equation}\notag
\signedbergmanfan{s}{M}=\bigcup_{\substack{\flag\triangleleft\underline{M}:\\\flag~\text{is an}~s\text{-flag}}}\sigma_{\flag}.
\end{equation}
Hence, we can study $\realtrop(\Realsingsat{\ones{2}})$ by studying $s$-flags $\flag\triangleleft M$ of the associated oriented matroid to $\Ideal{I}$. In \cite{markwigmarkwigshustin} maximal flags of the unoriented matroid $\underline{M}$ were studied and classified. We enhance the classification to the oriented matroid $M$.

\begin{remark}[Gale duals]\label{rema:galedual3}
A Gale dual of a matrix $A$ is a matrix $G$ whose rows span the kernel of $A$. From the affine basis $\{\alpha_1,\alpha_2,\alpha_3\}$ and, therefore, $A'$ as shown in \Cref{eq:galedual} we get the Gale dual \[G=\begin{bmatrix}
  -(\bar{A}')^\top & \mathbb{E}_{m-3}
  \end{bmatrix}.\] of $A'$. In particular, $\vectormatroid{G}=\idealmatroid{\Ideal{I}}$, i.e. $G$ realizes $\idealmatroid{\Ideal{I}}$ as a vector matroid. We refer to the $i$-th column of $G$ by $g_i$ such that $G=\begin{bmatrix} g_1&\cdots&g_m\end{bmatrix}$. Note that $g_{i+3}=e_i$ for $1\leq i\leq m-3$. The first three columns $g_1,g_2,g_3$ contain the $x$,$y$ and $z$ coordinates of the point configuration $\support$ with respect to the basis $\{\alpha_1,\alpha_2,\alpha_3\}$. For example, $(g_1)_j=-a_j$ (cf. \Cref{eq:affinebasis}). We refer to the first three columns as \textit{special columns} of $G$.
\end{remark}


\begin{remark}[Maximal flags in $\umatroid{M}$.]\label{rema:classificationflagsunorientedmatroid}
In \cite[Lemma 3.7]{markwigmarkwigshustin} maximal flags of flats $\flag\triangleleft\umatroid{M}$ were classified. A flag of flats $\flag=(F_1,\ldots,F_{m-3})\triangleleft\umatroid{M}$ satisfies either
\begin{enumerate}
\item[(a)] $|F_{m-3,m-4}|=4$ and $|F_{j,j-1}|=1$ for all $j\neq m-3$, or
\item[(b)] $|F_{m-3,m-4}|=3$ and for some $k\neq m-3$ we have $|F_{k,k-1}|=2$ and $|F_{j,j-1}|=1$ for all $j\neq k,m-3$.
\end{enumerate}
In case (a) we have $F_{m-3,m-4}=\{i_1,i_2,i_3,i_4\}\subset\groundset{m}$ and any proper subset of $\{\alpha_{i_1},\alpha_{i_2},\alpha_{i_3},\alpha_{i_4}\}$ is affinely independent. Hence, $F_{m-4,m-3}$ is a (unsigned) circuit of type (A) or (B) (cf. \Cref{rema:planarcircuits}). In case (b) we have $F_{m-3,m-4}=\{i_1,i_2,i_3\}$ and $\{\alpha_{i_1},\alpha_{i_2},\alpha_{i_3}\}$ is affinely dependent. Thus, $F_{m-3,m-4}$ is a (unsigned) circuit of type (C). Moreover, all points $\alpha_r$ with $r\in F_{l,l-1}$ and $l>k$ are on the same line as $\{\alpha_{i_1},\alpha_{i_2},\alpha_{i_3}\}$.
\end{remark}

In the remaining part of this section we determine the sign vectors such that a given flag $\flag\triangleleft\underline{M}$ becomes an $s$-flag. Recall that this highly depends on the covectors as their zero-sets correspond to flats of $\underline{M}$. First, we like to know the set of covectors in $\cvector{M}$ morphing a flag $\flag\triangleleft\umatroid{M}$ of type (a) to a $s$-flag:

\begin{lemma}[$s$-flags of type (a)]\label{lemm:typeaflags}
Let $A'$ denote the matrix having the points of $\{1\}\times\support$ as columns, $G=\begin{bmatrix}g_1&\cdots&g_m\end{bmatrix}$ a Gale dual to $A'$ (cf. \Cref{rema:galedual3}) and let $M=\vectormatroid{G}$ denote the oriented vector matroid associated to $G$. Moreover, let $\vectormatroid{A'}$ be the oriented vector matroid associated to the shift of the point configuration $\support$ and let $\flag\triangleleft\underline{M}$ be a maximal flag of type (a) (cf. \Cref{rema:classificationflagsunorientedmatroid}) and $s\in\pvector{m}$ a pure sign vector. Then, $\flag$ is an $s$-flag if and only if there exists a signed circuit $C\in\circuits(\vectormatroid{A'})$ (with sign vector $s_C$ associated to $C$, cf. \Cref{rema:signvectorsandsignedsets}) such that $\underline{C}=F_{m-3,m-4}$ and $p_{\underline{C}}(s_C)= p_{\underline{C}}(s)$ (cf. \Cref{nota:sec3}), i.e. $F_{m-3,m-4}=C$ is a signed circuit  (cf. \Cref{rema:planarcircuits}).
\end{lemma}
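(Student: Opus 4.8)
The plan is to recast the $s$-flag condition in terms of affine dependences of $\support$ via Gale duality, using two identifications. First, by \Cref{rema:galedual3}, $\umatroid{M}$ is the dual matroid of $\vectormatroid{A'}$, so the flats of $\umatroid{M}$ are exactly the complements of unions of circuits of $\vectormatroid{A'}$. Second, since the rows of $G$ span $\ker(A')$ (\Cref{rema:galedual3}), \Cref{defi:covector} identifies $\cvector{M}$ with $\{\sign(\mu):\mu\in\ker(A')\}$, i.e.\ with sign vectors of affine dependences $\sum_i\mu_i\alpha_i=0$, $\sum_i\mu_i=0$. By \Cref{defi:sflatsflag} it then suffices, for each $i$, to decide whether there is some $\mu^{(i)}\in\ker(A')$ with $\supp{\mu^{(i)}}=E\setminus F_i$ and $\sign(\mu^{(i)}_k)=s_k$ for all $k\in E\setminus F_i$ (such $\mu^{(i)}$ produce the chain of covectors of \Cref{rema:chaincovectors}, but only their existence matters here).

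For the forward implication, a cardinality count on the type (a) pattern of \Cref{rema:classificationflagsunorientedmatroid} gives $F_{m-3}=E$, so $F_{m-3,m-4}=\{i_1,\dots,i_4\}$ is the complement of the flat $F_{m-4}$ and, by that same remark (every proper subset affinely independent), is a circuit $\underline{C}$ of $\vectormatroid{A'}$. If $\flag$ is an $s$-flag, then $\mu^{(m-4)}$ is supported exactly on $\underline{C}$; but the four columns of $A'$ indexed by $\underline{C}$ have rank $3$, so the dependences supported on $\underline{C}$ form a one-dimensional space spanned by the circuit vector of $C$, whence $\sign(\mu^{(m-4)})=\pm s_C$. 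After replacing $C$ by $-C$ if necessary, $s_C\subseteq s$, i.e.\ $p_{\underline{C}}(s_C)=p_{\underline{C}}(s)$.

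For the converse, given such a $C$ I build $\mu^{(m-3)},\dots,\mu^{(1)}$ by descending induction. Put $\mu^{(m-3)}=0$ (legitimate since $F_{m-3}=E$) and let $\mu^{(m-4)}$ be the circuit vector of $C$, which by hypothesis satisfies $\sign(\mu^{(m-4)}_k)=s_k$ on $\underline{C}=E\setminus F_{m-4}$. Assuming $\mu^{(j+1)}$ is built, write $\{r\}=F_{j+1}\setminus F_j$. Since $F_j$ is a flat of $\umatroid{M}$, its complement $E\setminus F_j$ is a union of circuits of $\vectormatroid{A'}$, so $r$ lies in a circuit $D$ with $\underline{D}\subseteq E\setminus F_j$; let $\nu$ be its circuit vector, so $\nu_r\neq 0$. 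Choosing $a>0$, $\sign(b)=s_r\cdot\sign(\nu_r)$, and $|b|$ small enough that the $a\mu^{(j+1)}$-term dominates on $E\setminus F_{j+1}$, the vector $\mu^{(j)}:=a\mu^{(j+1)}+b\nu$ lies in $\ker(A')$, vanishes on $F_j$ (both summands do), is nonzero on $E\setminus F_{j+1}$ and at $r$, and satisfies $\sign(\mu^{(j)}_k)=s_k$ on $E\setminus F_j$. This finishes the induction, so every $F_i$ is an $s$-flat and $\flag$ is an $s$-flag.

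I expect the descending induction in the converse to be the real content: one must know that a fresh affine dependence $\nu$ through the newly added element $r$ exists inside $E\setminus F_j$ — this is precisely where the hypothesis that $F_j$ is a flat of the Gale dual $\umatroid{M}$ enters, after translating into cyclic sets of $\vectormatroid{A'}$ — and that adjoining a small multiple of $\nu$ does not disturb the signs already pinned down by $\mu^{(j+1)}$, which holds because rescaling $\mu^{(j+1)}$ preserves its sign pattern. The remaining steps (covectors of $M$ versus $\ker(A')$, one-dimensionality of the dependences over a circuit, and moving between $M$, $\umatroid{M}$ and $\vectormatroid{A'}$) are bookkeeping once Gale duality is in place.
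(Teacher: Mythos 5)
Your proof is correct and follows essentially the same strategy as the paper's: identify covectors of $M$ with sign vectors of affine dependences in $\ker(A')$, pin down the covector vanishing on $F_{m-4}$ as (up to global sign) the circuit vector of $C$ for the forward direction, and for the converse build the remaining covectors one element at a time by adding a dependence through the new element and making it small enough not to disturb the signs already fixed. The only difference is presentational — you work invariantly in $\ker(A')$ and invoke the fact that complements of flats are unions of cocircuits, where the paper instead computes with the explicitly reindexed Gale dual $G$ and dual vectors $y_i$ — but the underlying argument is the same.
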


\begin{proof}
The flag $\flag$ is a $s$-flag if there is a set of elements $y_1,\ldots,y_{m-3}\in\Dual{\R^{m-3}}$ providing the covectors $v_i=(\sign(\sk{y_i}{g_j}))_{j=1,\ldots,m}$ satisfying $v_i\subseteq s$ and $v_i^0 = F_i$ (cf. \Cref{rema:chaincovectors}). We like to work with a convenient Gale dual $G$ that allows an easy description of $\flag$. Therefore, we rename the elements of $E=\groundset{m}$ such that $F_{m-3,m-4}=\{a,b,c,i_{m-3}\}$ and $F_{j,j-1}=\{i_j\}$ for all $j\neq m-3$. Thus, $E=\{a,b,c,i_1,\ldots,i_{m-3}\}$ is the ground set, $B=\{i_1,\ldots,i_{m-3}\}$ is a basis of $\underline{M}$ and $\{\alpha_a,\alpha_b,\alpha_c\}$ is affinely independent since $F_{m-3,m-4}$ is a circuit. We consider $A'$ with resorted columns, i.e.
\begin{equation}
A'=\bordermatrix{
& a		& b		& c		& i_1		& \cdots& i_{m-3} 	\cr
& 1		& 1		& 1		& 1		& \cdots& 1 		\cr
& \alpha_a	& \alpha_b	& \alpha_c	& \alpha_{i_1} 	& \cdots&\alpha_{i_{m-3}}\cr
}.\label{eq:resortedA}
\end{equation}
Using \Cref{rema:galedual3} we work with a Gale dual of the form
\begin{equation}
G=\bordermatrix{
	& a		& b		& c		& i_1	& \cdots	& i_{m-3} 	\cr
i_1	& -a_{i_1}	& -b_{i_1}	& -c_{i_1}	& 1	& 		&		\cr
\vdots	& \vdots 	& \vdots 	& \vdots 	& 	& \ddots	&		\cr 
i_{m-3}	& -a_{i_{m-3}}	& -b_{i_{m-3}}	& -c_{i_{m-3}}	& 	&		& 1		\cr
}.\label{eq:Galeduala}
\end{equation}
Recall that the entries of $g_a,g_b,g_c$ in row $k$ are the negatives of the coordinates of $\alpha_k$ in the affine basis $\alpha_a,\alpha_b,\alpha_c$, in particular $1-a_{i_j}-b_{i_j}-c_{i_j}=0$ (cf. \Cref{eq:affinebasis}).

\bigskip

First, we show ``$\Rightarrow$'': suppose $\flag$ is an maximal $s$-flag. Thus, all flats $F_i$ are $s$-flats, i.e. we have a covector $v_i\in\cvector{\vectormatroid{G}}$ such that $v_i\subseteq s$ and $v_i^0=F_i$ for all $i=1,\ldots,m-3$. Let $y_i\in\Dual{\R^{m-3}}$ denote an element providing $v_i$, i.e. $v_i=(\sign(\sk{g_j}{y_i}))_{j\in E}$. Due to the shape of $G$ we have $g_{i_j}=e_j$ for $j=1,\ldots,m-3$. From this we can immediately conclude:
\begin{equation}
\forall l\in\{1,\ldots,m-3\},j\in\{1,\ldots,m-3\}: (v_l)_{i_j} =0\quad\Leftrightarrow\quad (y_l)_j = 0.\label{eq:signatunitvector} 
\end{equation}
Now, we focus on the covectors. To begin with, $v_{m-3}=\zeros{m}\in\cvector{\vectormatroid{G}}$ such that $v_{m-3}^0=F_{m-3}=E$. We have $F_{m-3,m-4}=\{a,b,c,i_{m-3}\}$ and, therefore, the covector $v_{m-4}$ satisfies $(v_{m-4})_j = 0 $ if and only if $j\neq a,b,c,i_{m-3}$ (cf. \Cref{rema:chaincovectors}). By definition, $v_{m-4}=(\sign(\sk{y_{m-4}}{g_j}))_{j\in E}$ where $y_{m-4}\in\Dual{\R^{m-3}}$. From \Cref{eq:signatunitvector} we conclude that $(y_{m-4})_j=0$ for all $j\in\{1,\ldots,m-4\}$. Thus, $y_{m-4}=t e_{m-3}$ and, therefore, we get
\begin{equation}\notag
v_{m-4}=(\sign(\sk{y_{m-4}}{g_j}))_{j\in E}=(\sign(\sk{te_{m-3}}{g_j}))_{j\in E}=(\sign(t(g_j)_{m-3}))_{j\in E}.
\end{equation}
Hence, the signs of $v_{m-4}$ are determined by the $i_{m-3}$-th row of $G$.
\begin{equation}
e_{m-3}\cdot G = \bordermatrix{
	& a		& b 		& c		& i_1	&\cdots& i_{m-4}& i_{m-3}\cr
	& -a_{i_{m-3}}	& -b_{i_{m-3}}	& -c_{i_{m-3}}	& 0 	&\cdots& 0	& 1\cr
}\label{eq:G_m-3}
\end{equation}
As outlined in the beginning, we have the affine relation $\alpha_{i_{m-3}} = a_{i_{m-3}}\alpha_a + b_{i_{m-3}}\alpha_b + c_{i_{m-3}} \alpha_c$ with $a_{i_{m-3}} + b_{i_{m-3}} + c_{i_{m-3}} = 1$. Thus, the non-zero entries of row $i_{m-3}$ of $G$ form the coefficients of a signed circuit $C\in\circuits(\vectormatroid{A'})$, i.e. $C^\pm=\{i\in\{a,b,c,i_{m-3}\}:(g_i)_{i_{m-3}}\gtrless 0\}$. However, all entries are non-zero as $a,b,c\notin F_j$ for $j\leq m-4$. This means, for example, \[(v_{m-4})_a = \sign(\sk{y_{m-4}}{g_a})=\sign(t(-a_{i_{m-3}}))=\sign(t)\sign(-a_{i_{m-3}}).\] Hence, $(v_{m-4})_j=\pm\sign(j)$ for all $j\in C$. Thus, the signs of the covector $v_{m-4}$ at $\{a,b,c,i_{m-3}\}$ coincide with the signs of a signed circuit. By assumption we have $v_{m-4}\subseteq s$ and, therefore, $t\gtrless 0$ such that $s_j = (v_{m-4})_j$ for all $j\in C$. Hence, $p_{\underline{C}}(s)=p_{\underline{C}}(s_C)$.\smallskip

Vice versa, suppose there is a signed circuit $C\in\circuits(\vectormatroid{A'})$ such that $s$ coincides with $s_C$ at $\underline{C}$, i.e. for all $i\in\underline{C}$ holds: $s_i=\pm$ if and only if $i\in C^\pm$. Now, we construct elements $y_i\in\Dual{\R^{m-3}}$ providing covectors $v_i=(\sign(\sk{y_i}{g_j}))_{j\in E}\in\cvector{\vectormatroid{G}}$ such that $v_i\subseteq s$ and $v_i^0=F_i$ for $i=0,\ldots,m-3$. 
We make use of the identical Gale dual $G$ constructed above. In detail, the $j$-th component of an element $y\in\Dual{\R^{m-3}}$ determines the sign of $v$ in component $i_j$ uniquely for $j\in\{1,\ldots,m-3\}$ as 
\begin{equation}\notag
(v)_{i_j}=\sign(\sk{y}{g_{i_j}})=\sign(\sk{y}{e_j})=\sign(y_j),
\end{equation}
cf. \Cref{eq:signatunitvector}. Consequently, by fixing the components individually, it is no problem to provide elements $y\in\Dual{\R^{m-3}}$ such that $v$ has desired signs in the components indexed by $\{i_1,\ldots,i_{m-3}\}$. Thus, for any choice of $s$ we can define elements $y\in\Dual{\R^{m-3}}$ such that $v=(\sign(\sk{y}{g_j}))_{j\in E}$ coincides with $s$ in the components $\{i_1,\ldots,i_{m-3}\}$. The major task is to guarantee that we can do this such that the signs of the remaining components indexed by $\underline{C}=\{a,b,c,i_{m-3}\}$ coincide with those of $s$. To begin with, note that $y_{m-3}=\zeros{m-3}$ such that $v_{m-3}^0=\zeros{m}$ in order to satisfy $v_{m-3}^0= F_{m-3}=E$. As above, row $i_{m-3}$ of $G$ contains the coefficients that provide the affine dependency 
\begin{equation}\notag
\alpha_{i_{m-3}}+(-a_{i_{m-3}})\alpha_a+(-b_{i_{m-3}})\alpha_b+(-c_{i_{m-3}})\alpha_c=0
\end{equation}
where $1-a_{i_{m-3}}-b_{i_{m-3}}-c_{i_{m-3}}=0$. For the signs at $\underline{C}$ we have \[s_a=\pm\sign(-a_{i_{m-3}}),~s_b=\pm\sign(-b_{i_{m-3}}),~s_c=\pm\sign(-c_{i_{m-3}})\And{}s_{i_{m-3}}=\pm\sign(1)\] as the circuit is unique up to a common factor $\pm 1$ and $p_{\underline{C}}(s)=p_{\underline{C}}(s_C)$. For $v_{m-4}$ it is required that $v_{m-4}^0=E\setminus F_{m-3,m-4}$. Thus, $(v_{m-4})_{i_j} =0$ for all $j\in\{1,\ldots,m-4\}$ and due to \Cref{eq:signatunitvector} this implies that $(y_{m-4})_j = 0 $ for all $j\in\{1,\ldots,m-4\}$. Hence, $y_{m-4}=t_{m-3} e_{m-3}$. Let us consider the sign at component $a$, i.e. \[(v_{m-4})_a=\sign(\sk{g_a}{y_{m-4}})=\sign(t_{m-3}(-a_{i_{m-3}}))=\pm s_a.\] As $v_{m-4}\subseteq s$ we pick $t_{m-3}\gtrless 0$ such that $(v_{m-4})_a=s_a$. This way we can specify the entries of $v_{m-4}$ at $\underline{C}$ conform to $s$, i.e. $(v_{m-4})_j=s_j$ for $j\in\underline{C}$. The next step is to define $v_{m-5},\ldots,v_{0}$ iteratively, based on $v_{m-4}$ as it is necessary to satisfy $v_{i}\subseteq v_{i-1}$ for $i=2,\ldots,m-3$ (cf. \Cref{rema:chaincovectors}). Note that $F_{j,j-1}=\{i_j\}$ for $j\neq m-3$, i.e. $v_{j}$ differs from $v_{j-1}$ in component $i_j$. Hence, $y_j$ differs from $y_{j-1}$ in component $j$. For example, $F_{m-4,m-5}=\{i_{m-4}\}$ such that we want $(v_{m-5})_{i_{m-4}}=s_{i_{m-4}}$ so that we define $y_{m-5}=y_{m-4} + t_{m-4}e_{m-4}$ with $t_{m-4}\gtrless 0$ according to $s_{i_{m-4}}$. In general, we define \[y_{m-k-1}=y_{m-k} + t_{m-k}e_{m-k}\] for $k=4,\ldots,m-2$ with $t_{m-k}\gtrless 0$ according to $s_{i_{m-k}}$. However, the specifications of $y_{m-k}$ at components $j\in\{1,\ldots,m-4\}$ may affect the signs of the components indexed by $\underline{C}$ of the corresponding covector. In detail, we have $(v_l)_{i_j}=\sign(t_j)$ for all $l\in\{1,\ldots,m-5\}$ and $l<j\leq m-3$ whereas e.g. $(v_l)_a = \sign\left(\sum_{j=3}^{s+1} t_{m-j}e_{m-j}(-a_{i_{m-j}})\right)$. Therefore, we stick to the following convention: we choose a very large $t_{m-3}\in\R$ such that $|t_{m-3}a_{i_{m-3}}|>\sum_{j=4}^{m-1} |t_{m-j}a_{i_{m-j}}|$, i.e. the signs at $\underline{C}$ remain fixed by the choice for $t_{m-3}$ and we get $v_{m-3}=\zeros{m}\subseteq v_{m-4}\subseteq\ldots\subseteq v_1\subseteq s$ with $v_i^0=F_i$ for all $i$.
\end{proof}

The situation for maximal flags $\flag\triangleleft\underline{M}$ of type (b) is quite similar but not identical. The proof of the previous lemma reveals that the signs at $F_{m-3,m-4}$ are related whereas all signs at $F_{j,j-1}$ with $j\neq m-3$ are unrelated. For a flag $\flag$ of type (b) we have $|F_{m-3,m-4}|=3$, $|F_{k,k-1}|=2$ for some $k\neq m-3$ and $|F_{j,j-1}|=1$ for all $j\neq k,m-3$. As one might expected the signs in $F_{m-3,m-4}$ and $F_{k,k-1}$ are related:
\begin{lemma}[$s$-flags of type (b)]\label{lemm:typebflags}
Let $A'$ denote the matrix having the points of $\{1\}\times\support$ as columns, $G=\begin{bmatrix}g_1&\cdots&g_m\end{bmatrix}$ a Gale dual to $A'$ (cf. \Cref{rema:galedual3}) and let $M=\vectormatroid{G}$ denote the oriented matroid associated to $G$. Moreover, let $\vectormatroid{A'}$ be the oriented vector matroid associated to the shift of the point configuration $\support$ and let $\flag\triangleleft\underline{M}$ be a maximal flag of type (b) (cf. \Cref{rema:classificationflagsunorientedmatroid}), i.e. $|F_{m-3,m-4}|=3$ and there is some $k\neq m-3$ such that $|F_{k,k-1}|=2$ and $|F_{j,j-1}|=1$ for all $j\neq k,m-3$ and let $s\in\pvector{m}$ be a pure sign vector. Recall that the affine span of the points in $\support$ corresponding to elements of $F_{m-3,m-4}$ forms a line $L$. Then, $\flag$ is an $s$-flag if and only if there exists a signed circuit $C\in\circuits(\vectormatroid{A'})$ (with sign vector $s_{C}$ associated to $C$, cf. \Cref{rema:signvectorsandsignedsets}) such that $\underline{C}=F_{m-3,m-4}$ and $p_{\underline{C}}(s)=p_{\underline{C}}(s_C)$ (cf. \Cref{nota:sec3}), i.e. $C=F_{m-3,m-4}$ is a signed circuit (cf. \Cref{rema:planarcircuits}), and, if $F_{k,k-1}=\{a,b\}$, $s_a = \pm s_b$ depending on whether $\alpha_a$ and $\alpha_b$ are on different/the same side of $L$.
\end{lemma}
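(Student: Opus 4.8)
The plan is to rerun the proof of \Cref{lemm:typeaflags} with one extra twist: the Gale dual has to be built from an affine basis adapted to the two-element step at $k$, and among the relevant Gale-dual rows exactly one --- the one indexed by that step's basis element --- fails to lie on the line $L$.

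First the setup. By \Cref{rema:classificationflagsunorientedmatroid}, $F_{m-3,m-4}$ is an unsigned circuit of type (C) of size three, say $F_{m-3,m-4}=\{p,q,r\}$, so $\alpha_p,\alpha_q,\alpha_r$ are distinct and collinear, spanning $L$, and every $\alpha_{i_l}$ with $k<l<m-3$ lies on $L$ too. I first claim that the two points indexed by $F_{k,k-1}=\{a,b\}$ do not lie on $L$: since $F_k\setminus F_{k-1}=\{a,b\}$ and $F_k=\closure{F_{k-1}\cup\{a\}}$, there is a circuit $D$ of $\umatroid M$ with $\{a,b\}\subseteq D\subseteq F_{k-1}\cup\{a,b\}$, and since $\umatroid M$ is the dual matroid of $\vectormatroid{A'}$, the complement $E\setminus D$ is a hyperplane (a line) of $\vectormatroid{A'}$ containing $F_{m-3,m-4}$, hence $E\setminus D=L$ and $\alpha_a,\alpha_b\notin L$. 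Now relabel $E$ so that $B=\{i_1,\ldots,i_{m-3}\}$ is a basis of $\umatroid M$ with $F_{j,j-1}=\{i_j\}$ for $j\neq k,m-3$, with $i_k=b$ and $i_{m-3}=r$; then $F_{k-1}=\{i_1,\ldots,i_{k-1}\}$ and $F_{m-4}=E\setminus\{p,q,r\}$. Since $\{\alpha_a,\alpha_p,\alpha_q\}$ is affinely independent, take (as in \Cref{rema:galedual3}) a Gale dual $G$ of $A'$ whose special columns are $g_a,g_p,g_q$ and with $g_{i_j}=e_j$ for $1\le j\le m-3$.

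The key point is that $(g_a)_{i_j}$ equals minus the $\alpha_a$-coordinate of $\alpha_{i_j}$ in the affine frame $\{\alpha_a,\alpha_p,\alpha_q\}$, a quantity that vanishes exactly for points of $L$. Thus $(g_a)_{i_j}=0$ for $j=k+1,\ldots,m-3$, while $(g_a)_b\neq 0$; and since $\alpha_a$ itself has $\alpha_a$-coordinate $+1$, the sign $\sign((g_a)_b)$ is $-$ when $\alpha_b$ lies on the same side of $L$ as $\alpha_a$ and $+$ otherwise. Granting this, ``$\Rightarrow$'' runs as in \Cref{lemm:typeaflags}: the covector $v_{m-4}$ with $v_{m-4}^0=F_{m-4}=E\setminus\{p,q,r\}$ is forced to be read off the single row $i_{m-3}=r$ of $G$, whose nonzero entries (by the affine relation among $\alpha_p,\alpha_q,\alpha_r$) occupy columns $p,q,r$, so that $F_{m-3,m-4}$ supports a signed circuit $C\in\circuits(\vectormatroid{A'})$ with $p_{\underline C}(s)=p_{\underline C}(s_C)$; and the covector $v_{k-1}$ with $v_{k-1}^0=F_{k-1}$ is realized by a functional $y_{k-1}$ supported on $e_k,\ldots,e_{m-3}$, so $\sk{y_{k-1}}{g_a}=(y_{k-1})_k(g_a)_b$ while $(v_{k-1})_b=\sign((y_{k-1})_k)$, both nonzero because $a,b\notin F_{k-1}$, whence $(v_{k-1})_a=\sign((g_a)_b)\,(v_{k-1})_b$ and $v_{k-1}\subseteq s$ forces $s_a=\sign((g_a)_b)\,s_b$, which is the asserted relation.

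For ``$\Leftarrow$'', given the two conditions one constructs functionals $y_{m-3}=0,y_{m-4},\ldots,y_1$ iteratively, $y_{l-1}=y_l+t_le_l$ with $\sign(t_l)=s_{i_l}$, the magnitudes forming an increasing hierarchy $|t_2|\ll\cdots\ll|t_{m-3}|$ so that dominant terms pin the signs at $\{p,q,r\}$ to $s$ (using the signed-circuit condition), exactly as at the end of the proof of \Cref{lemm:typeaflags}. The only new phenomenon is the step $l=k$: there $t_ke_k$ switches on $b=i_k$, giving $(v_{k-1})_b=\sign(t_k)=s_b$, and simultaneously $a$, with $\sk{y_{k-1}}{g_a}=t_k(g_a)_b$ (because $\sk{y_k}{g_a}=0$ by the key point), so $(v_{k-1})_a=\sign((g_a)_b)\,s_b=s_a$ by the condition on $s_a,s_b$; and $(g_a)_{i_j}=0$ for $j>k$ guarantees that switching on the later basis elements never disturbs the $a$-component, so the chain $v_{m-3}\subseteq\cdots\subseteq v_1\subseteq s$ with $v_l^0=F_l$ is obtained, i.e.\ $\flag$ is an $s$-flag. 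The real work, I expect, is the setup paragraph --- showing $\alpha_a,\alpha_b\notin L$ and picking the affine basis so the $\alpha_a$-coordinate reads off the side of $L$; once that is done the extra step at $k$ collapses to the single Gale-dual entry $(g_a)_b$, and everything else, including the magnitude bookkeeping, mirrors the type (a) proof.
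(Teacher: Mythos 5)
Your proof is correct and follows essentially the same architecture as the paper's: reindex so that the complement of a basis of $\umatroid{M}$ is an affinely independent triple consisting of two points of the collinear circuit $F_{m-3,m-4}$ and one point of $F_{k,k-1}$, build the adapted Gale dual, read the circuit condition off the single row indexing the third collinear point, read the relation between the two signs of $F_{k,k-1}$ off the covector $v_{k-1}$, and run the reverse direction by the same iterative construction with a dominance hierarchy on the $|t_j|$. The one place you genuinely diverge is in determining the sign of the key Gale-dual entry ($(g_a)_b$ in your notation, $-c_{i_k}$ in the paper's): the paper does an exhaustive case analysis over planar circuit types (A), (B), (C) and the positions of interior points, whereas you observe directly that this entry is minus a barycentric coordinate, i.e.\ minus the value of an affine functional vanishing on $L$ and equal to $1$ at the off-line basis point, so its sign reads off the side of $L$ immediately. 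That is a cleaner and less error-prone derivation of the same fact. You also supply a duality argument (circuit of $\umatroid{M}$ inside $F_k$ whose complement is a line flat of $\vectormatroid{A'}$ containing $F_{m-3,m-4}$) for why both elements of $F_{k,k-1}$ lie off $L$, which the paper only asserts in passing; this is a worthwhile addition since the nonvanishing of $(g_a)_b$ rests on it.
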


\begin{proof}
Similarly to the proof of \Cref{lemm:typeaflags} we begin with specifying a Gale dual $G$ that is convenient for $\flag$. We reindex such that $F_{m-3,m-4}=\{i_{m-3},a,b\}$, $F_{k,k-1}=\{i_k,c\}$ and $F_{j,j-1}=\{i_j\}$ for all $j\neq k,m-3$. Thus, $E=\{a,b,c,i_1,\ldots,i_{m-3}\}$ and the three elements $a,b,c$ provide the affinely independent set $\{\alpha_a,\alpha_b,\alpha_c\}$. This is true since $\alpha_a,\alpha_b$ are affinely independent as $F_{m-3,m-4}$ is a circuit and $\alpha_c$ is not contained in the line $L$ containing the circuit $F_{m-3,m-4}$. Moreover, $B=\{i_1,\ldots,i_{m-3}\}$ forms a basis of $\underline{M}$. We consider $A'$ with re-sorted columns (cf. \Cref{eq:resortedA}). The first three special columns of the Gale dual $G$ obtained from $A'$ using the construction in \Cref{rema:galedual3} contain the negative coordinates of $\alpha_{i_j}$ with respect to the basis $\{\alpha_a,\alpha_b,\alpha_c\}$ for $j\in\groundset{m-3}$ (cf. \Cref{eq:Galeduala}).\smallskip

First, we show ``$\Rightarrow$'': suppose $\flag$ is a maximal $s$-flag, i.e. all flats $F_i$ are $s$-flats and there is a covector $v_i\in\cvector{\vectormatroid{G}}$ such that $v_i\subseteq s$ and $v_i^0=F_i$ for all $i\in E=\groundset{m-3}$. In particular, $v_{m-3}=\zeros{m}\in\cvector{\vectormatroid{G}}$ as we require $v_{m-3}^0=F_{m-3}=E$. We have $F_{m-3,m-4}=\{i_{m-3},a,b\}$ and, therefore, the element $v_{m-4}$ satisfies $(v_{m-4})_j = 0 $ if and only if $j\in E\setminus\{i_{m-3},a,b\}$. Let $y_{m-4}\in\Dual{\R^{m-3}}$ be an element such that $v_{m-4}=(\sign(\sk{y_{m-4}}{g_j}))_{j\in E}$. Recall that we have $g_{i_j}=e_{j}$ for $j\in\{1,\ldots,m-3\}$ such that 
\begin{equation}\notag
(v_{m-4})_{i_j} = \sign(\sk{y_{m-4}}{g_{i_j}})=\sign(\sk{y_{m-4}}{e_j})=\sign((y_{m-4})_{j})
\end{equation}
for $j\in\{1,\ldots,m-3\}$. Hence, $y_{m-4}=(0,\ldots,0,t)=te_{m-3}\in\Dual{\R^{m-3}}$ for some $t\neq 0$. Now, consider the row of $G$ indexed by $i_{m-3}$ (cf. \Cref{eq:G_m-3}). We have $(g_a)_{i_{m-3}}=-a_{i_{m-3}}$ and $(g_b)_{i_{m-3}}=-b_{i_{m-3}}$ whereas $(g_c)_{i_{m-3}}=-c_{i_{m-3}}=0$ as $F_{m-3,m-4}=\{i_{m-3},a,b\}$ is a circuit. According to \Cref{eq:affinebasis}, these coordinates provide the affine relation $\alpha_{i_{m-3}}=a_{i_{m-3}}\alpha_1+b_{i_{m-3}}\alpha_2$ (or, equivalently, $\alpha_{i_{m-3}}+(-a_{i_{m-3}})\alpha_1+(-b_{i_{m-3}})\alpha_2=0$) with $1-a_{i_{m-3}}-b_{i_{m-3}}-c_{i_{m-3}}=0$. Since we have $(g_{i_{m-3}})_{i_{m-3}}=1$, the $i_{m-3}$-th row of $G$ contains the coefficients of a signed circuit $C$ such that $\underline{C}=F_{m-3,m-4}$. Recall that $(v_{m-4})_j\neq 0$ if and only if $j=a,b,i_{m-3}$. We know that $y_{m-4}=te_{i_{m-3}}$. Thus, for example, 
\begin{equation}\notag
(v_{m-4})_a=\sign(\sk{y_{m-4}}{g_a})=\sign(t(-a_{i_{m-3}}))=\sign(t)\sign(-a_{i_{m-3}}).
\end{equation}
Hence, $(v_{m-4})_j=\pm\sign(j)$ for $j\in C$. Since $v_{m-4}\subseteq s$ we have $t\gtrless 0$ such that $s_j=(v_{m-4})_j$ for $j=a,b,i_{i_{m-3}}$, i.e. there is a signed circuit $C\in\circuits(\vectormatroid{A'})$ such that $\underline{C}=F_{m-3,m-4}$ and $p_{\underline{C}}(s)=p_{\underline{C}}(s_C)$. For $m-4\geq j\geq k+1$ we have $F_{j,j-1}=i_j$ and $\alpha_{i_j}$ is contained in the affine line spanned by $\{\alpha_a,\alpha_b\}$. Hence, we have $(g_c)_{i_j}=-c_{i_j}=0$ for all $j$ satisfying $k+1\leq j\leq m-3$ (cf. \Cref{eq:Galeduala}). Since $F_{k,k-1}=\{i_k,c\}$ we have $(g_c)_{i_k}=-c_{i_k}\neq 0$ --- otherwise, $c\notin F_{k,k-1}$. 
\begin{equation}
G=\bordermatrix{
	& a		& b		& c		& i_1	&\cdots &i_{k-1}&i_k	&i_{k+1}& \cdots&i_{m-3}\cr
i_1	& -a_{i_1}	& -b_{i_1}	& -c_{i_1}	& 1	& 	&	&	&	&	&	\cr
\vdots	& \vdots 	& \vdots 	& \vdots 	& 	& \ddots&	&	&	&	&	\cr
i_{k-1}	& -a_{i_{k-1}}	& -b_{i_{k-1}}	& -c_{i_{k-1}}	& 	&	& 1	&	&	&	&	\cr
i_k	& -a_{i_k}	& -b_{i_k}	& -c_{i_k}\neq0	&	&	&	& 1	&	&	&	\cr
i_{k+1}	& -a_{i_{k+1}}	& -b_{i_{k+1}}	& 0		&	&	&	&	&1	&	&	\cr
\vdots	& \vdots 	& \vdots 	& \vdots 	&	&	&	&	&	&\ddots	&	\cr
i_{m-3}	& -a_{i_{m-3}}	& -b_{i_{m-3}}	& 0		& 	&	&	&	&	&	& 1	\cr
}.\label{eq:Galedualb}
\end{equation}

Moreover, $(v_{k-1})_{i_{k}}\neq 0$ and consequently $(y_{k-1})_k \neq 0$. This implies $(v_{k-1})_c \neq 0$ and $(v_{k-1})_{i_k}\neq 0$. Recall that row $i_k$ of $G$ contains the coordinates of $\alpha_{i_k}$ with respect to the affine basis $\{\alpha_a,\alpha_b,\alpha_c\}$. Hence, the indices of the non-zero entries of the $i_k$-th row of $G$ form a circuit $C'\in\circuits(\vectormatroid{A'})$. Due to the reindexing we have $F_{k-1}=\{i_1,\ldots,i_{k-1}\}$, i.e. $(y_{k-1})_j=0$ for all $j\in\{1,\ldots,k-1\}$. By assumption, $v_{k-1}\subseteq s$, i.e. we have $(y_{k-1})_k\gtrless 0$ such that \[(v_{k-1})_{i_k}=\sign(\sk{y_{k-1}}{g_{i_k}})=\sign(\sk{y_{k-1}}{e_k})=\sign((y_{k-1})_k)=s_{i_k}.\] However, this implies $(v_{k-1})_{c} = \sign(\sk{y_{k-1}}{g_c})=\sign((y_{k-1})_k(-c_{i_k})) = s_c$, i.e. we get the condition $s_c = \sign(-c_{i_k}) s_{i_k}$ on the signs at $i_k$ and $c$ in $s$. The sign of $(-c_{i_k})$ purely depends on the relative position of $\alpha_{i_k}$ to $\alpha_a,\alpha_b$ and $\alpha_c$. Let $L_{ab}$ denote the affine line through $\alpha_a$ and $\alpha_b$.

\begin{itemize}
\item Let $\alpha_c$ and $\alpha_{i_k}$ be separated by $L_{ab}$ (cf. \Cref{fig:Lseparates}). Suppose $\alpha_a,\alpha_b,\alpha_c,\alpha_{i_k}$ form a circuit of type (A). Then, $\sign(-c_{i_k})=+$ since $\sign(1)=+$, as $1$ is the coefficient of $\alpha_{i_k}$ in the affine relation. If $\alpha_a,\alpha_b,\alpha_c,\alpha_{i_k}$ form a circuit of type (B) then $\sign(-c_{i_k})=+$ as well since $\sign(1)=+$ and $L_{ab}$ separates $\alpha_c$ and $\alpha_{i_k}$. If $\alpha_a,\alpha_b,\alpha_c,\alpha_{i_k}$ is a circuit of type (C) then (w.l.o.g.) $-a_{i_k}=0$ and due to $\sign(1)=+$ we have $\sign(-c_{i_k})=+$. We conclude: if $\alpha_c$ and $\alpha_{i_k}$ are separated by $L_{ab}$ then $s_{i_k}=s_c$.
\item Assume that $\alpha_c$ and $\alpha_{i_k}$ are on the same side of $L_{ab}$ (cf. \Cref{fig:Ldoesnotseparate}). If $\alpha_a,\alpha_b,\alpha_c,\alpha_{i_k}$ form a circuit of type (A) then $\sign(-c_{i_k})=-$ since the sign of the coefficient of $\alpha_{i_k}$, 1, is $+$.  If $\alpha_a,\alpha_b,\alpha_c,\alpha_{i_k}$ form a circuit of type (B) then either $\alpha_c$ or $\alpha_{i_k}$ is the interior point. We conclude that their signs differ, and since $\sign(1)=+$ we have $\sign(-c_{i_k})=-$. If $\alpha_a,\alpha_b,\alpha_c,\alpha_{i_k}$ form a circuit of type (C) we have (w.l.o.g.) $-a_{i_k}=0$. Then, $\alpha_c$ or $\alpha_{i_k}$ is the interior point --- otherwise $L_{ab}$ would separate $\alpha_c$ and $\alpha_{i_k}$. However, this implies $\sign(-c_{i_k})\neq\sign(1)=+$. We conclude: if $\alpha_c$ and $\alpha_{i_k}$ are on the same side of $L_{ab}$ then $s_{i_k}\neq s_c$.
\end{itemize}
Consequently, we have $s_c=s_{i_k}$ if and only if $L_{ab}$ separates $\alpha_c$ and $\alpha_{i_k}$.
\smallskip

For the other direction we use identical notations. Suppose we have a circuit $C\in\circuits(\vectormatroid{A'})$ such that $p_{F_{m-3,m-4}}(s)=s_C$ and $s_c=\pm s_{i_k}$, depending on the relative position of $\alpha_c$ and $\alpha_{i_k}$ to $L_{ab}$. The goal is to construct elements $y_i\in\Dual{\R^m}$ such that $v_i=(\sign(\sk{y_i}{g_j}))_{j\in E}$ for $i=1,\ldots,m-3$ satisfying $v_i^0=F_i$ and $v_i\subseteq s$. We begin with $v_{m-3}$. We require $v_{m-3}^0=F_{m-3}=E$, i.e. $v_{m-3}=\zeros{m}$. Recall from the proof of \Cref{lemm:typeaflags} that, for $j\in\{1,\ldots,m-3\}$, the $j$-th component of $y\in\Dual{\R^{m-3}}$ determines the sign of $v=(\sign(\sk{g_j}{y}))_{j\in E}$ in component $i_j$ as $g_{i_j}=e_j$. Consequently, it is no problem to provide elements $y\in\Dual{\R^{m-3}}$ such that the corresponding covectors have desired signs at components indexed by $\{i_1,\ldots,i_{m-3}\}$. It remains to show that the signs at $\{a,b,c\}$ remain unchanged, i.e. as initially required by the assumption.

Due to \Cref{eq:signatunitvector} we have $g_{i_j}=e_j$, i.e. $y_{m-3}=\zeros{m-3}$ provides $v_{m-3}$. We look for $v_{m-4}$ satisfying $v_{m-4}^0=F_{m-4}$. Note that $F_{m-3,m-4}=\{a,b,i_{m-3}\}$, i.e. we want $(v_{m-4})_j\neq 0$ if and only if $j\in\{a,b,i_{m-3}\}$. Equivalently, $(v_{m-4})_j=0$ if and only if $j\in\{i_1,\ldots,i_{m-4},c\}$. As $(y_{m-4})_j$ determines $(v_{m-4})_{i_j}$ we conclude that $(y_{m-4})_j=0$ for all $j\in\{1,\ldots,m-4\}$. Hence, $y_{m-4}=t_{m-3}e_{m-3}$ with $t_{m-3}\neq 0$. Recall that row $i_{m-3}$ of $G$ contains the negative coefficients of the affine relation $a_{i_{m-3}}\alpha_a+b_{i_{m-3}}\alpha_b = \alpha_{i_{m-3}}$. The indices form the signed circuit $C$. As $p_{\underline{C}}(s)=p_{\underline{C}}(s_C)$ we have $s_a=\pm\sign(-a_{i_{m-3}}),s_b=\pm\sign(-b_{i_{m-3}})$ and $s_{i_{m-3}}=\pm\sign(1)$ as the circuit is unique up to a common factor $\pm1$. We have seen that $y_{m-4}=t_{m-3}e_{m-3}$. We have $t_{m-3}\gtrless 0$ such that
\begin{equation}
(v_{m-4})_a=\sign(\sk{g_a}{y_{m-4}})=\sign(\sk{g_a}{t_{m-3}e_{m-3}})=\sign(t_{m-3}(g_a)_{m-3})=s_a.
\end{equation}
Then, $(v_{m-4})_j=\sign(j)$ for all $j\in C$, i.e. $v_{m-4}\subseteq s$. We define $v_{m-5},\ldots,v_{0}$ iteratively, based on $y_{m-4}$ (cf. \Cref{rema:chaincovectors}). Due to the reindexing we have $F_{j,j-1}=\{i_j\}$ for all $j\neq k,m-3$. We define $y_{m-j-1}=y_{m-j} + t_{m-j}e_{m-j}$ iteratively for $j=4,\ldots,m-1$ with $t_{m-j}\gtrless 0$ according to $s_{i_{m-j}}$. This way we can specify all signs according to those of $s$ at $\{i_1,\ldots,i_{m-4}\}$. The signs at $a,b,i_{m-3}$ were fixed with $y_{m-4}$. It remains to show what happens to the sign at $c$. This is determined in $v_{k-1}$: $v_k$ and $v_{k-1}$ differ at two indices, $i_k$ and $c$. Recall that $y_{k-1}=y_k+t_ke_k$. We pick $t_k\gtrless 0$ such that $(v_{k-1})_{i_k}=s_{i_k}$. Also note that $(g_c)_j=0$ for all $j\in\{i_{k+1},\ldots,i_{m-3}\}$, i.e. $(v_{l})_c =0$ for all $l\in\{k,\ldots,m-3\}$. Now, as $(y_{k-1})_k\neq 0$ we have $(v_{k-1})_c =\sign(\sk{g_c}{y_{k-1}})=\sign((g_c)_k t_k)\neq 0$ because $(g_c)_k\neq 0$. This is true since $\alpha_{i_k}$ is not contained in $L_{ab}$. However, we have $(g_c)_k=-c_{i_k}$. Moreover, row $i_k$ of $G$ contains the coefficients of the affine relation between $\alpha_{i_k},\alpha_a,\alpha_b$ and $\alpha_c$. As $\sign(i_k)=+$ we can determine the sign of $c$:
\begin{itemize}
\item Suppose $L_{ab}$ separates $\alpha_c$ and $\alpha_{i_k}$ (cf. \Cref{fig:Lseparates}). If $\alpha_a,\alpha_b,\alpha_c$ and $\alpha_{i_k}$ form a circuit of type (A) then $\sign(c)=+$. If it is a circuit of type (B) then either $\alpha_a$ or $\alpha_b$ is the interior point. Hence, $\sign(c)=+$. If it is a circuit of type (C) we have (w.l.o.g.) $a_{i_k}=0$, i.e. $\alpha_c$ is on a line with $\alpha_{i_k}$ and $\alpha_b$. Consequently, $\sign(c)=\sign(i_k)=+$.
\item Suppose $L_{ab}$ does not separate $\alpha_c$ and $\alpha_{i_k}$ (cf. \Cref{fig:Ldoesnotseparate}). If $\alpha_a,\alpha_b,\alpha_c$ and $\alpha_{i_k}$ form a circuit of type (A) we have $\sign(c)=-\neq\sign(i_k)=+$. If it is a circuit of type (B) then either $\alpha_{i_k}$ or $\alpha_c$ is the interior point --- their signs are not equal, i.e. $\sign(c)=-$. If it is a circuit of type (C) we have (w.l.o.g.) $a_{i_k}=0$, i.e. $\alpha_c$, $\alpha_{i_k}$ and $\alpha_b$ are on a line. Thus, (w.l.o.g.) $\alpha_c$ is the interior point and $\alpha_c$ and $\alpha_{i_k}$ are the boundary points such that $\sign(c)=-$ since $\sign(i_k)=+$.
\end{itemize}
We conclude that we get a sign at $c$ and $i_k$ in $v_{k-1}$, depending on the affine relations explained above. By assumption $s_c$ and $s_{i_k}$ satisfy the identical condition, i.e. we can pick $t_k\gtrless 0$ such that $s_{i_k}=(v_{k-1})_{i_k}$ and $s_c=(v_{k-1})_c$ in order to satisfy $v_{k-1}\subseteq s$. In order to guarantee that all subsequent covectors $v_j$ with $j\leq k-2$ stick to these signs at $c$ and $i_k$ we pick $t_{k}$ big enough (cf. discussions about $t_{m-3}$ before).
\end{proof}

\begin{figure}
\centering
\subcaptionbox{$s=(+,-,-,+,-,+)$.}[0.3\textwidth]{
\begin{tikzpicture}
\def\x{0}
\def\y{0}
\def\shift{1}
\coordinate[shape=circle,inner sep=2pt,fill,label={[yshift=-20 pt]$+$}] (P1) at (\x,\y);
\coordinate[shape=circle,inner sep=2pt,fill,label={[yshift=-20 pt]$-$}] (P2) at (\x+\shift,\y);
\coordinate[shape=circle,inner sep=2pt,fill,label={[xshift=-10 pt]$-$}] (P3) at (\x,\y+\shift);
\coordinate[shape=circle,inner sep=2pt,fill,label={[yshift=-20 pt]$+$}] (P4) at (\x+2*\shift,\y);
\coordinate[shape=circle,inner sep=2pt,fill,label={[yshift=-\shift pt]$-$}] (P5) at (\x+\shift,\y+\shift);
\coordinate[shape=circle,inner sep=2pt,fill,label={[yshift=-\shift pt]$+$}] (P6) at (\x,\y+2*\shift);
\path[draw,black] (P4) -- (P5) -- (P2) -- (P4);
\path[draw,black] (P1) -- (P2) -- (P5) -- (P6) -- (P3) -- (P1);
\end{tikzpicture}
}
\hspace{5pt}
\subcaptionbox{$s=(+,-,-,+,+,-)$}[0.3\textwidth]{
\begin{tikzpicture}
\def\x{0}
\def\y{0}
\def\shift{1}
\coordinate[shape=circle,inner sep=2pt,fill,label={[yshift=-20 pt]$+$}] (P1) at (\x,\y);
\coordinate[shape=circle,inner sep=2pt,fill,label={[yshift=-20 pt]$-$}] (P2) at (\x+\shift,\y);
\coordinate[shape=circle,inner sep=2pt,fill,label={[xshift=-10 pt]$-$}] (P3) at (\x,\y+\shift);
\coordinate[shape=circle,inner sep=2pt,fill,label={[yshift=-20 pt]$+$}] (P4) at (\x+2*\shift,\y);
\coordinate[shape=circle,inner sep=2pt,fill,label={[yshift=-\shift pt]$+$}] (P5) at (\x+\shift,\y+\shift);
\coordinate[shape=circle,inner sep=2pt,fill,label={[yshift=-\shift pt]$-$}] (P6) at (\x,\y+2*\shift);
\path[draw,black] (P4) -- (P5) -- (P2) -- (P4);
\path[draw,black] (P1) -- (P2) -- (P5) -- (P6) -- (P3) -- (P1);
\end{tikzpicture}
}
\hspace{5pt}
\subcaptionbox{$s=(+,-,-,-,+,+)$}[0.3\textwidth]{
\begin{tikzpicture}
\def\x{0}
\def\y{0}
\def\shift{1}
\coordinate[shape=circle,inner sep=2pt,fill,label={[yshift=-20 pt]$+$}] (P1) at (\x,\y);
\coordinate[shape=circle,inner sep=2pt,fill,label={[yshift=-20 pt]$-$}] (P2) at (\x+\shift,\y);
\coordinate[shape=circle,inner sep=2pt,fill,label={[xshift=-10 pt]$-$}] (P3) at (\x,\y+\shift);
\coordinate[shape=circle,inner sep=2pt,fill,label={[yshift=-20 pt]$-$}] (P4) at (\x+2*\shift,\y);
\coordinate[shape=circle,inner sep=2pt,fill,label={[yshift=-\shift pt]$+$}] (P5) at (\x+\shift,\y+\shift);
\coordinate[shape=circle,inner sep=2pt,fill,label={[yshift=-\shift pt]$+$}] (P6) at (\x,\y+2*\shift);
\path[draw,black] (P4) -- (P5) -- (P2) -- (P4);
\path[draw,black] (P1) -- (P2) -- (P5) -- (P6) -- (P3) -- (P1);
\end{tikzpicture}
}
\caption{Sign distributions for the regular marked subdivision of $\Delta=\conv(0,2e_1,2e_2)$.\label{fig:signdistributionflagoftypeb}}
\end{figure}
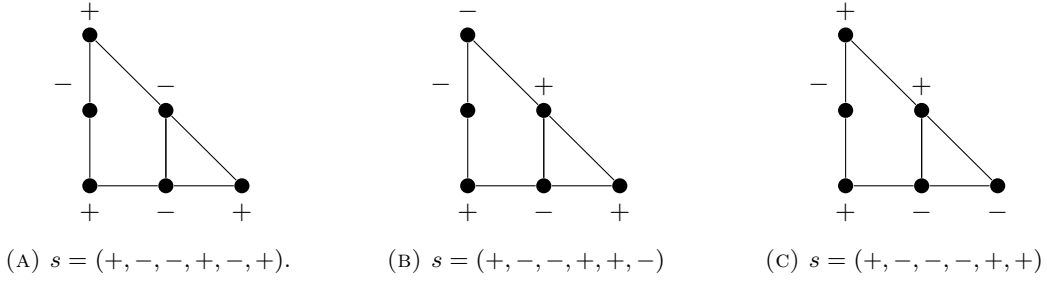

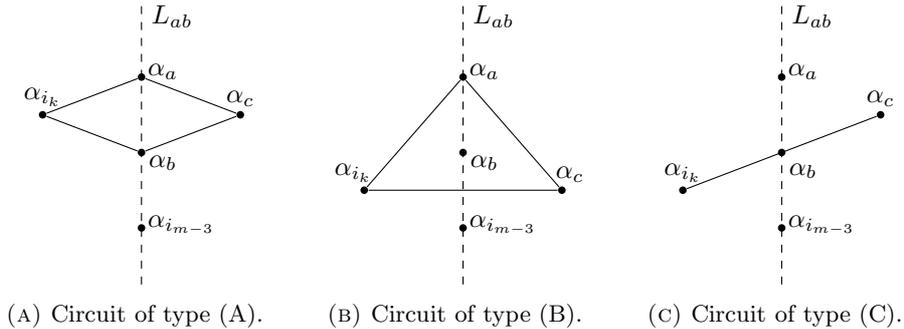
\begin{figure}
\subcaptionbox{Circuit of type (A).}{
\begin{tikzpicture}
\def\x{2}
\def\y{2}
\def\shift{20}
\coordinate[shape=circle,inner sep=1pt,fill,label={[xshift=8 pt,yshift=-6 pt]$\alpha_a$}] (P1) at (\x,\y+2);
\coordinate[shape=circle,inner sep=1pt,fill,label={[xshift=8 pt,yshift=-12 pt]$\alpha_b$}] (P2) at (\x,\y+1);
\coordinate[shape=circle,inner sep=1pt,fill,label={[xshift=15 pt,yshift=-8 pt]$\alpha_{i_{m-3}}$}] (P3) at (\x,\y);
\coordinate[shape=circle,inner sep=1pt,fill,label={[yshift=-2 pt]$\alpha_c$}] (P4) at (\x+1.3,\y+1.5);
\coordinate[shape=circle,inner sep=1pt,fill,label={[yshift=-2 pt]$\alpha_{i_k}$}] (P5) at (\x-1.3,\y+1.5);
\node at (\x+0.4,\y+2.8) {$L_{ab}$};
\path[draw,dashed] (\x,\y-0.75) -- (\x,\y+3);
\path[draw,black] (P1) -- (P4) -- (P2) -- (P5) -- (P1);
\end{tikzpicture}
}
\hspace{10pt}
\subcaptionbox{Circuit of type (B).}{
\begin{tikzpicture}
\def\x{2}
\def\y{2}
\def\shift{20}
\coordinate[shape=circle,inner sep=1pt,fill,label={[xshift=8 pt,yshift=-6 pt]$\alpha_a$}] (P1) at (\x,\y+2);
\coordinate[shape=circle,inner sep=1pt,fill,label={[xshift=8 pt,yshift=-12 pt]$\alpha_b$}] (P2) at (\x,\y+1);
\coordinate[shape=circle,inner sep=1pt,fill,label={[xshift=15 pt,yshift=-8 pt]$\alpha_{i_{m-3}}$}] (P3) at (\x,\y);
\coordinate[shape=circle,inner sep=1pt,fill,label={[yshift=-2 pt,xshift=3 pt]$\alpha_c$}] (P4) at (\x+1.3,\y+0.5);
\coordinate[shape=circle,inner sep=1pt,fill,label={[yshift=-2 pt,xshift=-4 pt]$\alpha_{i_k}$}] (P5) at (\x-1.3,\y+0.5);
\node at (\x+0.4,\y+2.8) {$L_{ab}$};
\path[draw,dashed] (\x,\y-0.75) -- (\x,\y+3);
\path[draw,black] (P1) -- (P4) -- (P5) -- (P1);
\end{tikzpicture}
}
\hspace{10pt}
\subcaptionbox{Circuit of type (C).}{
\begin{tikzpicture}
\def\x{2}
\def\y{2}
\def\shift{20}
\coordinate[shape=circle,inner sep=1pt,fill,label={[xshift=8 pt,yshift=-6 pt]$\alpha_a$}] (P1) at (\x,\y+2);
\coordinate[shape=circle,inner sep=1pt,fill,label={[xshift=8 pt,yshift=-15 pt]$\alpha_b$}] (P2) at (\x,\y+1);
\coordinate[shape=circle,inner sep=1pt,fill,label={[xshift=15 pt,yshift=-8 pt]$\alpha_{i_{m-3}}$}] (P3) at (\x,\y);
\coordinate[shape=circle,inner sep=1pt,fill,label={[yshift=-2 pt]$\alpha_c$}] (P4) at (\x+1.3,\y+1.5);
\coordinate[shape=circle,inner sep=1pt,fill,label={[yshift=-2 pt]$\alpha_{i_k}$}] (P5) at (\x-1.3,\y+0.5);
\node at (\x+0.4,\y+2.8) {$L_{ab}$};
\path[draw,dashed] (\x,\y-0.75) -- (\x,\y+3);
\path[draw,black] (P4) -- (P5);
\end{tikzpicture}
}
\caption{$L_{ab}$ separates $\alpha_c$ and $\alpha_{i_k}$.\label{fig:Lseparates}}
\end{figure}

\begin{figure}
\subcaptionbox{Circuit type (A).}{
\begin{tikzpicture}
\def\x{2}
\def\y{2}
\def\shift{20}
\coordinate[shape=circle,inner sep=1pt,fill,label={[xshift=-8 pt,yshift=-6 pt]$\alpha_a$}] (P1) at (\x,\y+2);
\coordinate[shape=circle,inner sep=1pt,fill,label={[xshift=-8 pt,yshift=-6 pt]$\alpha_b$}] (P2) at (\x,\y+1);
\coordinate[shape=circle,inner sep=1pt,fill,label={[xshift=15 pt,yshift=-8 pt]$\alpha_{i_{m-3}}$}] (P3) at (\x,\y);
\coordinate[shape=circle,inner sep=1pt,fill,label={[xshift=8 pt,yshift=-6 pt]$\alpha_c$}] (P4) at (\x+1,\y+2);
\coordinate[shape=circle,inner sep=1pt,fill,label={[xshift=10 pt,yshift=-8 pt]$\alpha_{i_k}$}] (P5) at (\x+1,\y+1);
\node at (\x+0.4,\y+2.8) {$L_{ab}$};
\node at (\x+2,\y) {$\hspace{1pt}$};
\path[draw,dashed] (\x,\y-0.75) -- (\x,\y+3);
\path[draw,black] (P1) -- (P4) -- (P5) -- (P2) -- (P1);
\end{tikzpicture}
}
\hspace{10pt}
\subcaptionbox{Circuit type (B).}{
\begin{tikzpicture}
\def\x{2}
\def\y{2}
\def\shift{20}
\coordinate[shape=circle,inner sep=1pt,fill,label={[xshift=-8 pt,yshift=-6 pt]$\alpha_a$}] (P1) at (\x,\y+2);
\coordinate[shape=circle,inner sep=1pt,fill,label={[xshift=-8 pt,yshift=-6 pt]$\alpha_b$}] (P2) at (\x,\y+1);
\coordinate[shape=circle,inner sep=1pt,fill,label={[xshift=15 pt,yshift=-8 pt]$\alpha_{i_{m-3}}$}] (P3) at (\x,\y);
\coordinate[shape=circle,inner sep=1pt,fill,label={[xshift=8 pt,yshift=-6 pt]$\alpha_c$}] (P4) at (\x+0.5,\y+1.25);
\coordinate[shape=circle,inner sep=1pt,fill,label={[xshift=10 pt,yshift=-8 pt]$\alpha_{i_k}$}] (P5) at (\x+2,\y+1);
\node at (\x+0.4,\y+2.8) {$L_{ab}$};
\path[draw,dashed] (\x,\y-0.75) -- (\x,\y+3);
\path[draw,black] (P1) -- (P5) -- (P2) -- (P1);
\end{tikzpicture}
}
\hspace{10pt}
\subcaptionbox{Circuit type (C).}{
\begin{tikzpicture}
\def\x{2}
\def\y{2}
\def\shift{20}
\coordinate[shape=circle,inner sep=1pt,fill,label={[xshift=-8 pt,yshift=-6 pt]$\alpha_a$}] (P1) at (\x,\y+2);
\coordinate[shape=circle,inner sep=1pt,fill,label={[xshift=-8 pt,yshift=-6 pt]$\alpha_b$}] (P2) at (\x,\y+1);
\coordinate[shape=circle,inner sep=1pt,fill,label={[xshift=15 pt,yshift=-8 pt]$\alpha_{i_{m-3}}$}] (P3) at (\x,\y);
\coordinate[shape=circle,inner sep=1pt,fill,label={[xshift=8 pt,yshift=-6 pt]$\alpha_c$}] (P4) at (\x+2,\y+1);
\coordinate[shape=circle,inner sep=1pt,fill,label={[xshift=8 pt,yshift=-3 pt]$\alpha_{i_k}$}] (P5) at (\x+1,\y+1);
\node at (\x+0.4,\y+2.8) {$L_{ab}$};
\path[draw,dashed] (\x,\y-0.75) -- (\x,\y+3);
\path[draw,black] (P4) -- (P5) -- (P2);
\end{tikzpicture}
}
\caption{$L_{ab}$ does not separate $\alpha_c$ and $\alpha_{i_k}$.\label{fig:Ldoesnotseparate}}
\end{figure}
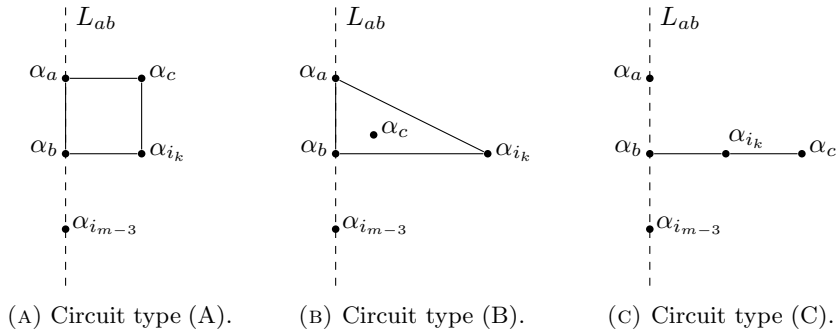

\begin{remark}
The real tropicalization of the family of real Laurent polynomials with support $\support$, providing real plane tropical curves with a singularity in $\ones{2}$ dual to a circuit of type (A) or (B), multiplied with the lineality group equals the set of equivalence classes of $\signedsecondaryfan{\support}$ corresponding to signed regular marked subdivisions containing a circuit of type (A) or (B) (cf. \cite[Lemma 4.2.4.26]{juergens}). However, for circuits of type (C) there are examples of equivalence classes which we only obtain partially in the product $\realtrop(\Realsingsat{\ones{2}})\odot_{\R} G\times\rowspace(A')$ (cf. \Cref{rema:realtropicaldiscriminant}).
For details see \cite[Section 4.2.4]{juergens}, in particular \cite[Example 4.2.4.32]{juergens}.
\end{remark}

\subsection{Classification of Real Plane Tropical Curves of Maximal Dimensional Type with a Singularity fixed in $\ones{2}$}\label{subsec:firststepsrealclassification}

In this section we exploit signed regular marked subdivisions arising from weight classes of $\realtrop(\Realsingsat{\ones{2}})$. Thereby, we focus on the $(+,+)$-chart as we fix the singularity in $p=\ones{2}$. In particular, for a given real tropical Laurent polynomial $f\in\RealTroplauringtwovars{}$ such that $\zeros{2}^+\in\RealTh{f}$ is a singularity we classify the local picture around $\zeros{2}^+\in C_{f,(+,+)}$. We stick to notations defined in \Cref{rema:planarcircuits} and \Cref{rema:classificationflagsunorientedmatroid}. Before giving the classification, note that $\signedsecondaryfan{\support}$ is not the parameter space of real plane tropical curves (\cite{markwigmarkwigshustin}). Many real tropical Laurent polynomials provide the identical real plane tropical curve.\smallskip

Consider a signed regular marked subdivision $T=\{(P_i,Q_i,s_{Q_i}):i=1,\ldots,k\}$. Recall that the type of $T$ is $\mathcal{H}=\{P_i:i=1,\ldots,k\}$ (cf. \Cref{defi:signedmarkedsubdivision}). In the complex case the type $\mathcal{H}$ is dual to a tropical curve $C'$ and we call $\mathcal{H}$ the type of $C'$. Let $C$ denote the real tropical curve dual to $T$. The chart $C_{v}$ of $C$ is dual to $T_v$ (cf. \Cref{rema:duality}) and, moreover, $|T_v|=|T|$, i.e. the type of $T_v$ equals the type of $T$. However, in contrast to the complex case, a chart $C_v$ of the real plane tropical curve $C$ dual to $T_v$ contains only parts of the curve $C'$ dual to $|T|$ owed to the sign conditions. The curves $C'$ dual to a given type $\mathcal{H}$ can be parametrized by an unbounded polyhedron in $\R^{b+2}$ where $b$ denotes the number of bounded edges in $C'$ and the two additional dimensions correspond to the spacial translation of the curve $C'$ in $\R^2$. The lengths of the bounded edges cannot be changed individually if the tropical curve $C'$ is of genus $g\geq 1$. Then, we have $2g$ not necessarily independent conditions corresponding to the closed circles in $C'$. The dimension of the parametrizing cone in $\R^{b+2}$ is called \textit{type-dimension of $\mathcal{H}$} and it is denoted by $\tdim{\mathcal{H}}$ (\cite[Subsection 2.2]{markwigmarkwigshustin}). As the real tropical curve $C$ inherits its polyhedra from $C'$ this cone also parametrizes real tropical curves $C$ of type $\mathcal{H}$.
\begin{lemma}[{\cite[Lemma 2.5]{markwigmarkwigshustin}}]\label{lemm:typedimension}
Let $T$ be a signed marked subdivision of $\Delta=\conv(\support)$ of type $\mathcal{H}$. Let $s\times\sigma_T\subset\signedsecondaryfan{\support}\Mod{m}$ denote the equivalence class of the secondary fan. Then,
\begin{equation}
\tdim{\mathcal{H}}\leq\dim(s\times\sigma_T)
\end{equation}
and we have equality if and only if all lattice points of $\Delta$ are marked points in $T$.
\end{lemma}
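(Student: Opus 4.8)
The plan is to reduce the statement to its unsigned counterpart \cite[Lemma 2.5]{markwigmarkwigshustin}, the point being that neither side of the inequality sees the signs. By \Cref{rema:equivalenceclassessignedsecondaryfan} the dimension of an equivalence class is defined by $\dim(s\times\sigma_T)=\dim(\sigma_T)$, where $\sigma_T$ is exactly the cone of the ordinary secondary fan $\secondaryfan{\support}$ corresponding to the regular marked subdivision $|T|=\{(P_i,Q_i):i=1,\ldots,k\}$; it does not depend on the sign vectors $s_{Q_i}$. Likewise, the type $\mathcal{H}=\{P_i:i=1,\ldots,k\}$ of $T$ is by definition the type of $|T|$ (cf. \Cref{defi:signedmarkedsubdivision}), and $\tdim{\mathcal{H}}$ depends only on $\mathcal{H}$. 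Hence the inequality $\tdim{\mathcal{H}}\leq\dim(\sigma_T)$ together with the equality characterisation is precisely \cite[Lemma 2.5]{markwigmarkwigshustin} applied to the marked subdivision $|T|$ of $\Delta$, and I would simply quote it.

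For completeness I would recall the mechanism. A weight vector $u\in\relint(\sigma_T)$ produces the modulus $|f|$ with coefficients $u$ and hence the tropical curve $\Th{|f|}$, which has type $\mathcal{H}$; recording the lengths of its bounded edges together with a translation vector in $\R^2$ gives a point of the $\tdim{\mathcal{H}}$-dimensional polyhedron parametrising all tropical curves of type $\mathcal{H}$. On $\sigma_T$ the combinatorial type is constant, so $u\mapsto\Th{|f|}$ is affine-linear there, and it maps $\sigma_T$ onto that polyhedron (edge lengths can be rescaled arbitrarily inside the cone). This already forces $\tdim{\mathcal{H}}\leq\dim(\sigma_T)$. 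The fibres of the map are governed by the lattice points of $\Delta$ that are \emph{not} marked in $|T|$: such a point $\alpha$ contributes a direction $-e_\alpha$ inside $\sigma_T$ (pushing $\alpha$ strictly below the upper hull) along which $\Th{|f|}$ does not change. Thus $\dim(\sigma_T)=\tdim{\mathcal{H}}$ exactly when every lattice point of $\Delta$ is a marked point of $|T|$, equivalently of $T$.

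The only thing that actually needs to be verified here is the first, reduction step --- that forgetting the signs alters neither $\tdim{\mathcal{H}}$ nor $\dim(s\times\sigma_T)$ --- and this is immediate from the definitions in \Cref{defi:signedmarkedsubdivision} and \Cref{rema:equivalenceclassessignedsecondaryfan}. I do not expect a genuine obstacle: the substantive combinatorics (the dimension count for secondary cones versus the curve-parametrising polyhedron) lives entirely in the cited lemma and is unaffected by the passage to the signed setting.
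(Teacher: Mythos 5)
Your reduction is exactly what the paper does: the lemma is quoted from \cite[Lemma 2.5]{markwigmarkwigshustin}, and the passage to the signed setting is immediate because $\dim(s\times\sigma_T)$ is defined as $\dim(\sigma_T)$ and $\tdim{\mathcal{H}}$ depends only on the unsigned type $|T|$. Your sketch of the underlying dimension count (edge lengths plus translation versus directions contributed by unmarked points) is consistent with the cited argument, so the proposal is correct and follows the same route.
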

As a consequence, we can study real plane tropical curves of maximal dimensional type by restricting to signed regular marked subdivisions without white points.
\begin{definition}[Maximal dimensional type]\label{defi:maxtype}
An equivalence class $s\times\sigma_T\subset\signedsecondaryfan{\support}$ is of \textit{maximal dimensional type}\index{type-dimension!maximal} if all points of $T$ are marked, i.e. there are no white points in the subdivision $T$.
\end{definition}

\begin{theorem}[Classification of singular real plane tropical curves of maximal dimensional type]\label{theo:classificationcurvesmaximaldimensionaltype}
Let $s\times\sigma_T\subset\signedsecondaryfan{\support}$ be an equivalence class maximal dimensional type. Consider the real tropical Laurent polynomial $f=\bigoplus_i(s_i,u_i)w^{\alpha_i}\in\RealTroplauringtwovars{}$ with support $\support$ defined by $(s,u)\in s\times\sigma_T$. Assume that $\RealTh{f}$ has a singularity at $\zeros{2}^+$. Then, the local picture of $\RealTh{f}_{(+,+)}$ around the singularity $\zeros{2}^+$ matches with one of the following cases:
\begin{enumerate}
\item $\zeros{2}^+$ is a $4$-valent vertex incident to $4$ edges of weight one,
\item $\zeros{2}^+$ is an isolated vertex of multiplicity 3,
\item $\zeros{2}^+$ is the midpoint of an edge of weight 2 that is connecting
\begin{itemize}
\item either two $1$-valent vertices, or
\item two $3$-valent vertices,
\end{itemize}
\item $\zeros{2}^+$ is contained in an interval of an edge of weight 2 connecting a 3-valent vertex and
\begin{itemize}
\item either a $1$-valent vertex, or
\item a $3$-valent vertex,
\end{itemize}
and the boundaries of the interval are the $3$-valent vertex and the midpoint, or
\item $\zeros{2}^+$ is contained in an infinite edge of weight 2 whose endpoint is a $3$-valent vertex.
\end{enumerate}
\end{theorem}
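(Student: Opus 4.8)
The plan is to recast the singularity hypothesis in the language of signed Bergman fans, to reduce to the classification of maximal flags in \Cref{rema:classificationflagsunorientedmatroid}, and then to read off the five local pictures from the sign information provided by \Cref{lemm:typeaflags} and \Cref{lemm:typebflags} together with the duality of \Cref{rema:duality}.

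First I would observe that, the singularity being fixed at $\zeros{2}^+ = \realtrop(\ones{2})$, the hypothesis that $\RealTh{f}$ is singular at $\zeros{2}^+$ says that the coefficient vector $(s,u) \in \RealTropicalgroup^m$ of $f$ lies in $\realtrop(\Realsingsat{\ones{2}}) \cap (s \times \R^m)$; by \Cref{theo:realtropicalbasis} and \Cref{theo:realtropicallinearspace=signedbergmanfan} this set equals $\signedbergmanfan{s}{M}$, with $M = \idealmatroid{\Ideal{I}} = \vectormatroid{G}$ the oriented matroid of \Cref{rema:galedual3}, so $\flag := \flag(u)$ is an $s$-flag of $\umatroid{M}$. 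Since $s \times \sigma_T$ is of maximal dimensional type, \Cref{lemm:typedimension} forces the cone $\sigma_T$, hence $\flag$, to be maximal, so by \Cref{rema:classificationflagsunorientedmatroid} the flag $\flag$ is of type (a) or of type (b). As $u$ is constant on the top step $F_{m-3,m-4}$ of $\flag$ and strictly smaller on every lower step, the modulus $(0,0)$ of $\zeros{2}^+$ is dual to $P_{0} := \conv\{\alpha_i : i \in F_{m-3,m-4}\}$ (the face of $|T|$ where $u$ attains its maximum value), and $F_{m-3,m-4}$ is a circuit of $\vectormatroid{A'}$ of type (A) or (B) when $\flag$ is of type (a), and of type (C) when $\flag$ is of type (b), in the sense of \Cref{rema:planarcircuits}. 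Finally, maximal dimensional type forces $P_0$ to contain no lattice points besides those indexed by $F_{m-3,m-4}$: any further lattice point of $\support$ in $P_0$ would be marked, hence would also attain the maximum value of $u$ and so lie in $F_{m-3,m-4}$, contradicting $|F_{m-3,m-4}| \in \{3,4\}$.

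For type (a) the analysis is short. If $P_0$ is a quadrangle (circuit (A)), the absence of further lattice points makes all four edges of $P_0$ primitive, so $\zeros{2}^+$ is a $4$-valent vertex of $\Th{|f|}$ incident to four weight-one edges; by \Cref{lemm:typeaflags} the restriction of $s$ to the four vertices of $P_0$ is the sign vector of a signed circuit of $\vectormatroid{A'}$, and the unique affine dependence among four lattice points in convex position alternates in sign around the quadrangle, so every edge of $P_0$ joins a $+$-vertex to a $-$-vertex; by \Cref{rema:duality} all four edges survive in the $(+,+)$-chart, which is case (1). If $P_0$ is a triangle with one interior lattice point (circuit (B)), then $\zeros{2}^+$ is a vertex of $\Th{|f|}$ whose multiplicity equals the normalized area of $P_0$, namely $3$ by Pick's formula; by \Cref{lemm:typeaflags} the three vertices of $P_0$ carry the same sign, opposite to that of the interior point, so every edge of $P_0$ joins two equal-sign vertices and, by \Cref{rema:duality}, none of the three edges survives in the $(+,+)$-chart, while $\zeros{2}^+$ itself survives since $P_0$ carries both signs; this is case (2).

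Type (b) is the main work and the main obstacle. Here $P_0$ is a lattice segment through three collinear points on a line $L$, its middle lattice point carrying the sign opposite to the two endpoints (\Cref{lemm:typebflags} and the shape of a type-(C) circuit), so $\zeros{2}^+$ lies in the relative interior of a weight-two edge $e$ of $\Th{|f|}$, and this part of the curve survives in the $(+,+)$-chart. It remains to describe the two-cell(s) of $|T|$ adjacent to $P_0$ and the part of the curve near $e$ that survives the sign condition of \Cref{rema:duality}. The key input is the second half of \Cref{lemm:typebflags}: the step $F_{k,k-1} = \{i_k,c\}$ contributes a lattice point $\alpha_c \notin L$, and the prescribed relation between the signs of $\alpha_{i_k}$ and $\alpha_c$ (equal if and only if $L$ separates them), together with maximality of $\flag$, forces the two-cell on the $\alpha_c$-side of $P_0$ to be a triangle whose dual vertex is $3$-valent; tracking which edges survive then shows that on one side of $\zeros{2}^+$ the curve $\RealTh{f}_{(+,+)}$ extends to that $3$-valent vertex, while on the other side it runs along $e$ only as far as the midpoint of $e$. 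Distinguishing whether $e$ is unbounded (when $P_0 \subseteq \partial\Delta$) or bounded, and in the bounded case whether the curve survives symmetrically on both sides of $\zeros{2}^+$ or only on one, yields cases (5), (3) and (4) respectively, the ``$1$-valent vertex'' alternatives occurring exactly when a would-be $3$-valent dual vertex loses two of its three edges to the sign condition. The delicate point throughout is the combination of the type-(C) circuit signs on $P_0$ with the sign relation between $\alpha_{i_k}$ and $\alpha_c$; one should also take care, in the first step, to justify that maximal dimensional type indeed forces $\flag(u)$ to be a maximal flag.
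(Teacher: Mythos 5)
Your overall strategy coincides with the paper's: translate the singularity condition into membership of $u$ in $\signedbergmanfan{s}{M}$ via \Cref{theo:realtropicalbasis} and \Cref{theo:realtropicallinearspace=signedbergmanfan}, invoke the classification of maximal flags into types (a) and (b) from \Cref{rema:classificationflagsunorientedmatroid}, and decode the local picture through \Cref{lemm:typeaflags}, \Cref{lemm:typebflags} and the duality of \Cref{rema:duality}. Your treatment of type (a) is correct and essentially identical to the paper's: the quadrangle with alternating circuit signs gives case (1), and the triangle with an interior point of opposite sign gives the isolated multiplicity-$3$ vertex of case (2).

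The gap is in type (b), in two respects. First, your structural claim is wrong: the sign relation of \Cref{lemm:typebflags} does \emph{not} force the two-cell on the $\alpha_c$-side of the segment $P_0$ to be a triangle with a $3$-valent dual vertex. The correct dichotomy is governed by whether the line $L$ through the type-(C) circuit separates the two highest points $\alpha_d,\alpha_e$ off $L$ (these are the two elements of $F_{k,k-1}$): if it does, both adjacent cells are triangles and, since then $s_d=s_e$ by \Cref{lemm:typebflags}, their dual vertices are either both $1$-valent or both $3$-valent; if it does not, the cell on that side is the quadrangle $\conv(\alpha_a,\alpha_b,\alpha_c,\alpha_d,\alpha_e)$, whose dual vertex is always $3$-valent precisely because $s_d\neq s_e$, while the other side carries either a triangle (whose dual vertex is $1$- or $3$-valent depending on $s_f$) or the boundary of $\Delta$. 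Second, and more importantly, you never establish the metric assertions of cases (3) and (4): that $\zeros{2}^+$ is the \emph{midpoint} of the weight-$2$ edge in the separated case, and that in the non-separated case it lies in the interval bounded by the $3$-valent vertex and the midpoint. This requires solving for the coordinates of the dual vertices from the heights: with height $\mu$ on $L$, $\lambda$ on $\alpha_d,\alpha_e$ and $\nu$ on $\alpha_f$, the two triangle vertices sit at $(\pm\mu,0)$, equidistant from the origin (hence the midpoint claim), whereas the quadrangle vertex sits at $(\mu-\lambda,0)$ and the opposite triangle vertex at $(-\mu,0)$, which places the origin strictly between the midpoint and the quadrangle's $3$-valent dual vertex. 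Without these computations the distinction between cases (3) and (4), and the precise location statements in the theorem, are not proved; your sentence attributing the (3)/(4) split to "whether the curve survives symmetrically" misidentifies the source of the case distinction. (Your closing worry about why $\flag(u)$ may be taken maximal is fair, but the paper asserts this with no more justification than you give.)
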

A classification of regular marked subdivisions arising from flags of flats of $\underline{M}$ can be found in \cite{markwigmarkwigshustin}. We exploit the known classification of subdivisions with respect to the additional sign conditions.
\begin{proof}
As $\zeros{2}^{+}\in\RealTh{f}$ is a singularity, we have $(s,u)\in\realtrop(\Realsingsat{\ones{2}})$. Hence, there is a real Laurent polynomial $F=\sum_i a_ix^{\alpha_i}$ with support $\support$ having a singularity at $\ones{2}$ and $\realtrop(a)=(s,u)$.
We have $\realtrop(\Realsingsat{\ones{2}})=\bigcap_{C\in\circuits}\RealTh{l_{C}}$ where $\circuits$ denotes the set of signed circuits of the oriented matroid $\idealmatroid{\Ideal{I}}$ (\Cref{theo:realtropicalbasis}). Moreover, \Cref{theo:realtropicallinearspace=signedbergmanfan} determines its $s$-charts, i.e. we have \[\left(\realtrop(\Realsingsat{\ones{2}})\right)_{s}=\signedbergmanfan{s}{M}.\] As $u\in\signedbergmanfan{s}{M}$ there is a maximal $s$-flag $\flag\triangleleft M$ such that $u\in\sigma_\flag\subset\signedbergmanfan{s}{M}$. Note that the real tropical Laurent polynomial defined by $(s,u)$ equals $f$, i.e. $C_{f,(+,+)}$ is dual to $T_{(+,+)}$ where $T$ denotes the regular marked subdivision obtained from $u$. Now, one of the following cases applies to $\flag$:
\begin{enumerate}
\item The flag $\flag$ is an $s$-flag of type (a), i.e. $F_{m-3,m-4}=\{a,b,c,d\}$ corresponds to a circuit of type (A) or (B) in $T$ (cf. \Cref{lemm:typeaflags}). Thus, we have $w_{a}=w_{b}=w_{c}=w_{d}\geq w_{k}$ for all $k\neq a,b,c,d$. In order to have a subdivision of maximal dimensional type, the circuit corresponding to $F_{m-3,m-4}$ cannot contain any other points.
\begin{enumerate}
\item If $F_{m-3,m-4}=\{a,b,c,d\}$ provides a signed circuit of type (A) then $\conv(\alpha_a,\alpha_b,\alpha_c,\alpha_d)$ is a quadrangle in the signed regular marked subdivision $T$. The signs at vertices coincide with the signed circuit (cf. \Cref{lemm:typeaflags}), i.e. they are alternating (cf. \Cref{fig:quadrangle}). Hence, any edge of the quadrangle is dual to an (maybe unbounded) edge in $C_{f,(+,+)}$. The quadrangle itself is dual to a vertex in $C_{f,(+,+)}$. There, $|f|$ attains the maximum at all four terms corresponding to the vertices. Since the coefficients of all terms have equal modulus, the vertex is at $\zeros{2}\in\RealTh{f}_{(+,+)}$. Hence, quadrangle is dual to a $4$-valent vertex at $\zeros{2}\in C_{f,(+,+)}$.
\item If $F_{m-3,m-4}=\{a,b,c,d\}$ is a signed circuit of type (B) then $\conv(\alpha_a,\alpha_b,\alpha_c,\alpha_d)$ is a triangle with an interior point in the signed regular marked subdivision $T$. The signs at vertices coincide with the signs of $C$ (cf. \Cref{lemm:typeaflags}), i.e. the interior point obtains (w.l.o.g.) ``$-$'' and the exterior points forming the triangle get ``$+$'' (cf. \Cref{fig:triangle}). The triangle is dual to a vertex in $C_{f,(+,+)}$ and we can solve for its coordinates. The maximum of $|f|$ obtained from the real tropical polynomial $f$ attains its maximum at all four terms as the moduli are equal. Hence, the vertex equals $\zeros{2}$. Moreover, the sign at the interior point differs from the signs of exterior points, i.e. $\zeros{2}\in C_{f,(+,+)}$ corresponds to the triangle. However, the edges forming the boundary of $\conv(\alpha_a,\alpha_b,\alpha_c,\alpha_d)$ are incident to vertices with equal signs. Consequently, $\zeros{2}\in C_{f,(+,+)}$ is an isolated point as the edges of the triangle have no dual counterpart in $C_{f,(+,+)}$. The isolated vertex $\zeros{2}^{+}\in\RealTh{f}$ differs from other vertices by its multiplicity, which is the area of the dual circuit $C=\conv(\alpha_{a},\alpha_{b},\alpha_{c},\alpha_{d})$. 
\end{enumerate}
\item The flag $\flag$ is an $s$-flag of type (b) and $F_{m-3,m-4}=\{a,b,c\}$ is a circuit of type (C) (cf. \Cref{lemm:typebflags}). Thus, $w_a=w_b=w_c\geq w_j$ for all $j\in F_l$ with $l\leq m-4$ and the points $\alpha_a,\alpha_b$ and $\alpha_c$ get highest and equal height. Consequently, we see a circuit $\conv(\alpha_a,\alpha_b,\alpha_c)$ of type (C) (cf. \Cref{fig:line}) in the signed regular marked subdivision $T=T_{(+,+)}$ formed by the points $\alpha_a,\alpha_b$ and $\alpha_c$. In particular, the signs at its vertices are alternating (cf. \Cref{lemm:typebflags}), i.e. $C$ is dual to an edge $e\in\RealTh{f}_{(+,+)}$. Note that $e$ passes through $\zeros{2}$. To see this, note that the coefficients of the monomials of $f$ corresponding to $\alpha_{a}$, $\alpha_{b}$ and $\alpha_{c}$ have equal modulus. Thus, $|f|$ attains the maximum at $w$ exactly at these three monomials (see inequality above). Hence, $e\subset\{y=0\}$. The points $\alpha_d$ and $\alpha_e$ also get equal height. Moreover, these points get highest height of all points that are not on the line $L_{abc}$ through $\alpha_a,\alpha_b$ and $\alpha_c$. We do not allow any white points in $T$ and, therefore, the points $\alpha_d$ and $\alpha_e$ have to be at minimal distance to the signed circuit $\conv(\alpha_a,\alpha_b,\alpha_c)$. Otherwise, the polygon containing $C$ and (w.lo.g.) $\alpha_{d}$, which is at non-minimal distance, would contain other points beside $\alpha_{a},\alpha_{b},\alpha_{c},\alpha_{d}$ that, in this particular case, must have higher height as $\alpha_{d}$. This cannot be true as $\flag$ is a flag of type (b) (cf. \Cref{rema:classificationflagsunorientedmatroid}). For more details, see \cite[Section 3.3]{markwigmarkwigshustin}. The points $\alpha_{d}$ and $\alpha_{e}$ have to be vertices of signed marked polytopes of $T$ in order to get a maximal dimensional type of smallest codimension. Let $L_{abc}$ denote the affine line through $\conv(\alpha_a,\alpha_b,\alpha_c)$. Two subcases appear:
\begin{enumerate}
\item Suppose $L_{abc}$ separates $\alpha_d$ and $\alpha_e$. The vertices of $\conv(\alpha_a,\alpha_b,\alpha_c)$ have highest but equal height $\mu$ and $\alpha_d$ and $\alpha_e$ get highest height $\lambda$ of all points not on $L_{abc}$, i.e. $\mu\geq \lambda$. Hence, $\conv(\alpha_a,\alpha_b,\alpha_c,\alpha_d)$ and $\conv(\alpha_a,\alpha_b,\alpha_c,\alpha_e)$ form the triangles in $T$ that contain $C$ as we do not allow white points. Assume that $L_{abc}=\{x=1\}$ and $\alpha_d=\zeros{2}$. We solve for the coordinates of the vertex dual to the triangle:
\begin{align*}
\lambda& + \sk{\alpha_d}{(x,y)} = \mu +\sk{\alpha_a}{(x,y)} =\mu + \sk{\alpha_b}{(x,y)}\\
\Leftrightarrow\quad\lambda& = \mu + x + (\alpha_a)_2 y = \mu + x + (\alpha_b)_2 y.
\end{align*}
Without restriction, $\lambda=0$ and, therefore, $\mu\geq 0$. Then, $y=0$ and $x=-\mu$, i.e. the vertex is at $(-\mu,0)$. Using similar arguments and symmetry, and the fact that $\alpha_d$ and $\alpha_e$ have equal height, the second vertex is at $(\mu,0)$. We observe that the distance of $(0,0)$ to the vertices is equal. These vertices are part of $\RealTh{f}_{(+,+)}$ as the polygons $\conv(\alpha_a,\alpha_b,\alpha_c,\alpha_d)$ and $\conv(\alpha_a,\alpha_b,\alpha_c,\alpha_e)$ contain the signed circuit that has alternating signs. Whether the vertices dual to $\conv(\alpha_a,\alpha_b,\alpha_c,\alpha_d)$ and $\conv(\alpha_a,\alpha_b,\alpha_c,\alpha_e)$ are incident to other edges purely depends on the signs at $\alpha_{d}$ and $\alpha_{e}$. Note that the vertex dual to $\conv(\alpha_a,\alpha_b,\alpha_c,\alpha_d)$ is incident to two more edges if and only if $s_a\neq s_d$ (without restriction, we assume that $\alpha_a$ is a boundary vertex of the signed circuit $\conv(\alpha_a,\alpha_b,\alpha_c)$). As $\flag$ is an $s$-flag of type (b) we know that $s_d=s_e$ (cf. \Cref{lemm:typebflags}), i.e. either both vertices dual to $\conv(\alpha_a,\alpha_b,\alpha_c,\alpha_d)$ and $\conv(\alpha_a,\alpha_b,\alpha_c,\alpha_e)$ are 3-valent or both vertices are 1-valent.
\item If $L_{abc}$ does not separate $\alpha_d$ and $\alpha_e$ then they are on the same side of $L_{abc}$. Recall that they need to be at minimal distance. Hence, they are on a line parallel to $L_{abc}$. Thus, $\conv(\alpha_a,\alpha_b,\alpha_c,\alpha_d,\alpha_e)$ is a quadrangle not covering any other points. Let $\mu$ and $\lambda$ denote the heights of $\alpha_a,\alpha_b,\alpha_c$ and $\alpha_d,\alpha_e$ respectively. If $\conv(\alpha_a,\alpha_b,\alpha_c)$ is not contained in the boundary of $T$, there must be another vertex $\alpha_f$ with height $\nu$ at minimal distance on the other side of $L_{abc}$ that forms a triangle with $\alpha_a,\alpha_b,\alpha_c$. We solve for the coordinates of the vertex dual to the triangle as above, i.e. we suppose that $L_{abc}=\{x\in\R^2:x_1=1\}$, $\alpha_f=\zeros{2}$. Then:
\begin{align*}
\nu& + \sk{\alpha_f}{(x,y)} = \mu + \sk{\alpha_a}{(x,y)} = \mu +\sk{\alpha_b}{(x,y)}\\
\Leftrightarrow\quad\nu& = \mu + x + (\alpha_a)_2 y = \mu +x+(\alpha_b)_2 y.
\end{align*}
By assumption we have $\nu<\lambda<\mu$ and we can assume that $\nu = 0$. Thus, the vertex dual to the triangle $\conv(\alpha_a,\alpha_b,\alpha_c,\alpha_f)$ is at $(-\mu,0)$. However, the vertex dual to the quadrangle $\conv(\alpha_a,\alpha_b,\alpha_c,\alpha_d,\alpha_e)$ is at $(\mu-\lambda, 0)$, due to its describing equations:
\begin{align*}
\lambda& + \sk{\alpha_d}{(x,y)} = \lambda + \sk{\alpha_e}{(x,y)} = \mu +\sk{\alpha_a}{(x,y)} = \mu +\sk{\alpha_b}{(x,y)}\\
\Leftrightarrow\quad\lambda& + 2x + (\alpha_d)_2 y = \lambda +2x+(\alpha_e)_2 y =\mu + x + (\alpha_a)_2 y =\mu + x + (\alpha_b)_2 y.
\end{align*}
Note that the distance from the vertex at $(-\mu,0)$ dual to the triangle to the singularity at $(0,0)\in C_{(+,+)}$ is bigger than the distance from the distance of the vertex at $(\mu-\lambda,0)$ dual to the quadrangle. In any case, the vertex dual to $\conv(\alpha_a,\alpha_b,\alpha_c,\alpha_d,\alpha_e)$ is $3$-valent as $\alpha_d$ and $\alpha_e$ are on the same side of $L_{abc}$, i.e. $s_d\neq s_e$ (cf. \Cref{lemm:typebflags}). Thus, one edge of $\conv(\alpha_a,\alpha_b,\alpha_c,\alpha_d,\alpha_e)$ has vertices with equal signs, i.e. the quadrangle is dual to a $3$-valent vertex. Whether the vertex dual to the triangle $\conv(\alpha_a,\alpha_b,\alpha_c,\alpha_f)$ has higher valence purely depends on the sign $s_f$: if (w.l.o.g.) $\alpha_a$ is a vertex of $\conv(\alpha_a,\alpha_b,\alpha_c)$ then the vertex dual to the triangle is $3$-valent if and only if $s_a\neq s_f$. If $s_a=s_f$ then the vertex dual to the triangle is $1$-valent. If $\conv(\alpha_a,\alpha_b,\alpha_c)$ is contained in the boundary we see the quadrangle $\conv(\alpha_a,\alpha_b,\alpha_c,\alpha_d,\alpha_e)$ in $T$ dual to a vertex at $(\mu-\lambda,0)\in C_{(+,+)}$ and the singularity lies on the infinite edge dual to the circuit $\conv(\alpha_a,\alpha_b,\alpha_c)$.
\end{enumerate}
\end{enumerate}
\end{proof}	
\bibliographystyle{amsalpha}
\bibliography{bib}
\end{document}